\renewcommand{\theenumi}{\Roman{enumi}}
\renewcommand{\p@enumii}{\theenumi.}
\newtheorem{neu}{}[section]
\newtheorem{Cor}[neu]{Corollary}
\newtheorem*{Cor*}{Corollary}
\newtheorem{Thm}[neu]{Theorem}
\newtheorem*{Thm*}{Theorem}
\newtheorem*{Observation*}{Observation}
\newtheorem{Prop}[neu]{Proposition}
\newtheorem*{Prop*}{Proposition}
\newtheorem{Lemma}[neu]{Lemma}
\newtheorem*{Lemma*}{Lemma}
\theoremstyle{remark}
\newtheorem*{Rmk*}{Remark}
\newtheorem{Rmk}[neu]{Remark}
\newtheorem*{Claim*}{Claim}
\theoremstyle{definition}
\newtheorem{Def}[neu]{Definition}
\newtheorem*{Ex*}{Example}
\newtheorem*{Qu*}{Question}
\begin{document}

\title{Orderable Contact Structures on Liouville-fillable Contact Manifolds}
\author{Peter Weigel}
\date{\today}
\maketitle
\vspace{.5pc}

\begin{abstract} We study the existence of positive loops of contactomorphisms on a Liouville-fillable contact manifold $\cpairex$.  Previous results (see \cite{EKP06}) show that a large class of Liouville-fillable contact manifolds admit contractible positive loops. In contrast, we show that for any
Liouville-fillable $\cpair$ with $\Dim\Si \geq 7$, there exists a Liouville-fillable contact structure $\xi'$ on $\Si$ which admits no positive loop at all.  Further, $\xi'$ can be chosen to agree with $\xi$ on the complement of a Darboux ball.
\end{abstract}

\vspace{.5pc}
\section{Introduction}
In \cite{EP00}, Eliashberg and Polterovich introduced the notion of \emph{orderability} in contact geometry.  Central to their investigation is the study of the (non-)existence of contractible positive loops of contactomorphisms on a contact manifold $\cpairex$, i.e. loops of contactomorphisms whose $t$-derivative is positive with respect to the contact form.  For example, positive loops of contactomorphisms arise in Riemannian geometry: a P-metric on a closed manifold $M$ induces such a loop on the associated unit cotangent bundle $ST^*M$.  It is a classical result that the existence of such a metric imposes strict topological restrictions on $M$, see \cite{Besse}.

There is also a link between topology and orderability, though it is not as well understood.  Using Givental's nonlinear Maslov index \cite{Givental_The_nonlinear_Maslov_index}, Eliashberg and Polterovich showed that $\RP^{2n-1}$ is orderable.  Eliashberg, Kim, and Polterovich showed that all unit contangent bundles are orderable, but that contact boundaries of 2-subcritical Stein domains (for instance, standard spheres) are not \cite{EKP06}.

This suggests that for fillable contact manifolds, the topology of the filling plays an important role.  We ask a related question: given a non-orderable contact structure on
$\Si$, are all other contact
structures non-orderable as well?  Our main result is the following:

\begin{Thm}\label{Thm_Mthm} Let $\cpair$ be a Liouville-fillable contact manifold with $\Dim\Si$ at least $7$.  Then there exists a Liouville-fillable
contact structure $\xi'$ on $\Si$, agreeing with $\xi$ on the complement of a Darboux ball, which admits no positive loop of contactomorphisms.  In particular, it admits no contractible positive loop of contactomorphisms, thus $\xi'$ is orderable.
\end{Thm}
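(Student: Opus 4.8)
\emph{Proof strategy.} Fix a Liouville filling $(W,d\lambda)$ of $(\Sigma,\xi)$ with $\dim W = 2n$, so $2n-1\geq 7$. The plan is to obtain $\xi'$ as a contact connected sum $\xi \,\#\, \eta$, where $(S^{2n-1},\eta)$ is a Brieskorn-type contact sphere — the link of a suitably chosen weighted-homogeneous singularity — equipped with its Milnor-fibre Liouville domain $(V,d\lambda_V)$, $\partial V = (S^{2n-1},\eta)$. Two points are essentially formal. First, every contact form is standard near a point, so the connected-sum surgery is carried out inside a Darboux ball of $(\Sigma,\xi)$ and leaves $\xi$ literally unchanged outside it; and since the relevant Brieskorn data can be chosen (this is one place the hypothesis $2n-1\geq 7$ is used) to give the \emph{standard} smooth sphere while retaining the dynamical features needed below, $\Sigma\#S^{2n-1}\cong\Sigma$. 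Second, the boundary connected sum $W'' := W\,\natural\,V$ — a Weinstein $1$-handle attached to $W\sqcup V$ — is again a Liouville domain, with $\partial W'' = (\Sigma,\xi')$, so $\xi'$ is Liouville-fillable. It remains only to choose the singularity so that $\xi'$ carries no positive loop.

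\textbf{From a positive loop to a Floer constraint.} Suppose $\{\phi_t\}_{t\in[0,1]}$ is a positive loop of contactomorphisms of $(\Sigma,\xi')$. After reparametrisation it is generated by a strictly positive contact Hamiltonian $h_t$, and concatenating it with itself yields, for every integer $N$, a positive path from $\mathrm{id}$ to $\mathrm{id}$ whose total action is $N$ times a fixed $c=c(h)>0$. Feeding these families into a parametrised Floer/continuation argument on the completed filling $\widehat{W''}$ — in the lineage of the orderability obstructions of \cite{EP00,EKP06}, but now using holomorphic curves with cylindrical/Lagrangian boundary — should produce action-filtration-shifting isomorphisms on the positive ($S^{1}$-equivariant) symplectic homology of $W''$, schematically $\mathrm{SH}^{S^{1},+}_{*}(W'')^{<a}\;\cong\;\mathrm{SH}^{S^{1},+}_{*}(W'')^{<a-c}$ for all $a$. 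Iterating, the rank of $\mathrm{SH}^{S^{1},+}_{*}(W'')$ in an action window of width $c$ is bounded independently of the window, so this invariant — equivalently, the linearised contact homology of $(\Sigma,\xi')$ — has \emph{at most linear} growth. Crucially this conclusion does not require the loop to be contractible, which is what will let the argument exclude non-contractible positive loops as well.

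\textbf{The Brieskorn summand violates the constraint.} The final step is to arrange, via the choice of singularity, that $W''$ disobeys this growth bound. The Reeb flow on a Brieskorn link is explicit — periodic away from a prescribed union of lower-dimensional links, with periods governed by the exponents — so the growth of $\mathrm{SH}^{S^{1},+}_{*}(V)$ can be computed and, following Ustilovsky's calculations (this is where $2n-1\geq 7$ is again needed, to have enough room for the exponents to force it), made \emph{super-linear}. One then transports this lower bound across the connected sum: either via a direct-sum-type formula for positive equivariant symplectic homology under boundary connected sum, or — more robustly — via a stretching-the-neck argument along the contact sphere where the handle is attached, with index and action bookkeeping forcing any holomorphic curve in $\widehat{W''}$ that might cancel a $V$-generator against the homology of $W$ to break. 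Either way the $V$-generators survive in $\mathrm{SH}^{S^{1},+}_{*}(W'')$, so $W''$ retains super-linear growth, contradicting the previous step. Hence $(\Sigma,\xi')$ admits no positive loop; a fortiori no contractible positive loop, i.e. $\xi'$ is orderable.

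\textbf{Where the difficulty lies.} The construction and its Liouville-fillability are standard surgery theory for Liouville domains, and the passage ``positive loop $\Rightarrow$ at most linear Floer growth'' is a (by now fairly routine) parametrised-Floer argument. The real obstacle is the last step: positive loops are global objects that do not restrict to the surgery region, so after the \emph{local} Brieskorn modification one must verify that the dynamical richness it injects genuinely persists in a \emph{global} Floer invariant of $W''$ — and do so \emph{uniformly in the arbitrary ambient filling $W$}, since nothing beyond Liouville-fillability is assumed. Controlling the curves in $\widehat{W''}$ that could a priori annihilate the Brieskorn contribution against that of $W$, by neck-stretching along the connected-sum sphere together with the explicit index data of the Brieskorn Reeb flow, is the crux of the proof.
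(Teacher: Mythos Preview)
Your overall architecture matches the paper's: show that a positive loop forces at most linear action-growth of a filtered Floer invariant of any filling, then modify the filling by boundary-connect-sum with an exotic ball whose growth rate exceeds $1$. The paper carries this out with the positive growth rate $\Gamma^+$ of Rabinowitz Floer homology rather than $SH^{S^1,+}$, and proves the linear bound by directly counting generators of the Rabinowitz chain complex for an admissible P-loop (each integer period contributes one Morse--Bott copy of $\Sigma$, each unit interval finitely many nondegenerate points, so the chain complex itself grows linearly). Your action-shift sketch is a plausible alternative route to that half of the argument, and the subcritical-handle invariance you invoke for the connect sum is exactly what the paper uses (Cieliebak), so no neck-stretching is needed there.

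The genuine gap is your choice of exotic summand. For the link of a weighted-homogeneous isolated hypersurface singularity with its natural contact form, the Reeb flow is the restriction of the weighted $S^1$-action and is therefore \emph{periodic}; the orbit space is a compact orbifold, and each iterate of the basic period contributes a fixed finite-dimensional summand to the Morse--Bott chain complex. Hence the filtered $SH^{S^1,+}$ (or $\RFH^+$) of any Brieskorn Milnor fibre grows \emph{linearly} in action, whatever the exponents. Ustilovsky's computations distinguish his spheres by the \emph{degrees} in which contact homology sits, not by action growth, so appealing to them here does not give what you need: no choice of exponents produces a ball violating the constraint from your previous step.

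The paper builds the ball by a different mechanism precisely to force super-linear growth. One starts from Seidel's contractible $4$-dimensional Liouville domain $A$ with $\SH(A)\neq 0$, takes the product with $DT^*S^1$, attaches a subcritical Weinstein $2$-handle to kill $\pi_1$ (leaving $\SH$ unchanged), and repeats once more; the h-cobordism theorem then identifies the result with a ball $B$, and Oancea's K\"unneth formula gives $\SH(B)\cong\SH(A)\otimes\SH(DT^*T^2)$. The quadratic growth now comes from the torus factor: closed geodesics on $T^2$ in the class $a^ib^j$ have length $\sqrt{i^2+j^2}$, so counting generators up to action $k$ is the Gauss circle problem and yields $\Gamma^+(\partial B,B)=2$. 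That lattice-point mechanism is the missing ingredient in your proposal, and it is not available from a weighted-homogeneous link.
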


We now outline the technical framework in which we will be working, and sketch the proof of the above.  Throughout, $(\Si,\al=\la\RE\Si)$ will denote the boundary of a Liouville domain $(W,\om=\d\la)$.  Cieliebak and Frauenfelder have defined an
invariant of such a pair $(\Si,W)$ called Rabinowitz Floer homology
\cite{CF09}.  In this paper, we define the \emph{positive growth rate} $\Ga^+(\Si,W)$ associated to
$\RFH(\Si,W)$.  We show that a positive loop of contactomorphisms can be used to compute $\RFH$.

In \SECT{section_Applications}, we show that $\Ga^+$ detects obstructions to the existence of a positive loop of contactomorphisms.  Specifically, a contact manifold whose filtered $\RFH$ has superlinear growth does not admit one.  This implies, in particular, that a Liouville-fillable contact manifold with $\Ga^+>1$ is orderable.

\begin{Prop} 
Suppose that $(W,\om =\d\la)$ is a Liouville domain with boundary $(\Si,\al = \la\RE\Si)$, which admits a
positive loop of contactomorphisms $\vp$. Then $\Ga^+(\Si,W) \leq 1$. 
\end{Prop}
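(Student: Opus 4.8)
The plan is to use the positive loop $\vp$ to equip the \emph{filtered} Rabinowitz Floer homology of $(\Si,W)$ with a translation structure: going once around the loop should induce an automorphism of $\RFH(\Si,W)$ which, on the action filtration, shifts every window by a fixed positive amount. Granting this, the filtered groups can grow at most linearly in the action parameter, which is exactly the assertion $\Ga^+(\Si,W)\le 1$.

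First I would quantify the positivity. Let $h_t\colon\Si\to\mathbb R$ be the contact Hamiltonian of $\vp=\{\vp_t\}_{t\in S^1}$ with respect to $\al$; positivity of $\vp$ means $h_t>0$ everywhere, so by compactness of $\Si\times S^1$ there is a constant $c>0$ with $\int_0^1\bigl(\min_\Si h_t\bigr)\,dt\ge c$. Lift $\vp$ to a Hamiltonian isotopy $\Phi_t$ of the completion $\widehat W$ of $(W,\om)$; because $\vp$ is a \emph{loop}, $\Phi_1$ fixes $\Si$, while near $\Si$ the generating Hamiltonian has the familiar form $r\,h_t$ in the cone coordinate $r$, so $c$ records the total amount of Reeb translation accumulated along $\vp$.

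Next I would invoke the description of $\RFH$ in terms of a positive loop that was set up earlier, applied both to $\vp$ and to the constant loop: each model computes $\RFH(\Si,W)$, and comparing them produces an isomorphism $\RFH(\Si,W)\to\RFH(\Si,W)$ which on the action filtration carries $\RFH^{(a,b)}(\Si,W)$ onto $\RFH^{(a+c,\,b+c)}(\Si,W)$ for all $a<b$, the reversed loop (which is negative) supplying the inverse. Hence every action window of length $c$ contributes the same dimension, which --- after a $C^\infty$-small perturbation making the Reeb flow non-degenerate, an operation affecting neither $\Ga^+$ nor the existence of a positive loop --- is some finite number $m$. Fixing the reference level $\epsilon>0$ used in the definition of $\Ga^+$ and invoking the exact sequences $\RFH^{(a,b)}\to\RFH^{(a,b')}\to\RFH^{(b,b')}$ between filtered groups to cut $(\epsilon,T)$ into $O(T)$ windows of length $c$, one obtains $\dim\RFH^{(\epsilon,T)}(\Si,W)=O(T)$, and this linear bound forces $\Ga^+(\Si,W)\le 1$.

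The heart of the matter is the comparison step: showing that $\vp$ genuinely induces a well-defined, invertible map on the filtered theory which shifts the action by a \emph{definite} amount rather than merely ``by something bounded''. The delicate points are that the $\vp_t$ need not be strict contactomorphisms, so $\Phi_t$ is not a Hamiltonian loop and the identification of the two Floer models must carry along a conical correction term; that $\RFH$ has both a positive- and a negative-action part, so the effect of the shift on each has to be controlled; and that the shift must be uniform over all generators, which is exactly what the estimate $\int_0^1(\min_\Si h_t)\,dt\ge c>0$ provides --- and is the reason the argument uses positivity of $\vp$ rather than only contractibility.
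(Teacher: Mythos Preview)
Your approach is genuinely different from the paper's, and while the underlying idea is sound, the way you have stated the key step is imprecise in a couple of places.

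\textbf{What the paper does.} The paper does not build a shift operator on homology at all. Instead it works at the \emph{chain level}. After perturbing $\vp$ to an admissible P-loop (so that $\raf$ is Morse for noninteger critical values and Morse--Bott with critical manifold $\Si$ at each integer), one simply counts generators: by $1$-periodicity of $\vp$, the Morse critical points with $\eta\in(j,j+1)$ are in bijection with those in $(0,1)$, say $\nu$ of them, and each integer level contributes the critical points of a fixed auxiliary Morse function $f$ on $\Si$, say $\nu'$ of them. Hence $\dim\mr{RFC}^{(-\ep,c_n)}_*\le n(\nu+\nu')$, and since $a(n)$ is bounded by the dimension of the chain group, $\Ga^+\le 1$ follows immediately. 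All of the work in the paper goes into the transversality theory making this count legitimate; once that is in place the growth estimate is a one-line computation.

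\textbf{Where your sketch is loose.} Two points would need repair before your argument goes through. First, the phrase ``applied both to $\vp$ and to the constant loop'' does not parse in the paper's framework: the constant loop is not positive, so it does not produce a Rabinowitz functional of the type considered here. What you actually want is either a continuation map between the autonomous model and the $\vp$-model, or (cleaner) the periodicity of the $\vp$-model itself, which gives an honest shift by exactly $1$ on the action filtration after the normalization $\widetilde\al=\al/h_0$. Second, your assertion that the map ``carries $\RFH^{(a,b)}$ onto $\RFH^{(a+c,b+c)}$'' with a single constant $c=\int_0^1(\min_\Si h_t)\,dt$ is not correct as stated: for a general comparison map the action shift is only bounded between $\int\min h_t$ and $\int\max h_t$, so you would get maps $\RFH^{(a,b)}\to\RFH^{(a+c_-,b+c_+)}$ rather than an exact translation. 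This still suffices for a linear bound via the long exact sequence subadditivity you describe, but it is not the clean isomorphism you claim.

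\textbf{Comparison.} Your route is the ``Seidel-type'' argument: the loop gives a filtered quasi-equivalence shifting action, hence periodic filtered dimensions, hence linear growth. The paper's route bypasses this entirely by noting that periodicity is already visible, and exact, on the level of \emph{generators} of the chain complex built from $\vp$ itself. The paper's approach requires the (substantial) Morse--Bott transversality package developed in Section~3, but once that is in hand it is more elementary: no continuation maps or action estimates for comparison homomorphisms are needed. Your approach would be more portable to settings where one cannot compute the chain complex so explicitly, but here it introduces machinery the paper does not set up.
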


\begin{Cor}
Suppose that $\cpair$ arises as the contact boundary of a Liouville domain $(W,\om = \d \la)$, with $\Ga^+(\Si,W)>1$.  Then $\cpair$ admits no positive loops of contactomorphisms.  In
particular, it is orderable. 
\end{Cor}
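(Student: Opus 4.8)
The plan is to obtain the Corollary as the contrapositive of the Proposition, the orderability clause then being a formal consequence of the definition. First I would note that $\cpair$ is, by hypothesis, the contact boundary of the Liouville domain $(W,\om=\d\la)$ with $\al=\la\RE\Si$, so the Proposition applies to it verbatim. Reading that statement contrapositively: if $\Ga^+(\Si,W)>1$, then $\cpair$ cannot admit a positive loop of contactomorphisms. This is the first assertion of the Corollary.

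For the orderability clause I would invoke the characterization of Eliashberg and Polterovich \cite{EP00}: a contact manifold is orderable precisely when it admits no contractible positive loop of contactomorphisms (equivalently, the relation defined by non-negative paths on the universal cover of $\mathrm{Cont}_0(\Si,\xi)$ is antisymmetric, hence a genuine partial order). A contractible positive loop is in particular a positive loop, so the non-existence just established a fortiori excludes the contractible ones; therefore $\cpair$ is orderable. (If $\mathrm{Cont}_0(\Si,\xi)$ is simply connected the two notions coincide; in general ``admits no positive loop'' is the strictly stronger statement, which only helps.)

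I do not expect a real obstacle here: the entire analytic burden sits in the Proposition, whose proof uses a positive loop to compute $\RFH$ and thereby bounds $\Ga^+$. The only thing left to check is compatibility of data — that ``positive'' is measured with respect to a contact form defining the same $\xi$ that enters $\Ga^+(\Si,W)$, and that the Liouville primitive $\la$ used to set up $\Ga^+$ restricts to $\al$ on $\Si$ — and both are automatic: positivity of a loop depends only on the cooriented contact structure (rescaling the defining form by a positive function preserves the sign of the contact Hamiltonian), and $\Ga^+(\Si,W)$ is by construction an invariant of the pair $(\Si,W)$. Hence the deduction goes through with no further input.
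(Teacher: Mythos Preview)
Your proof is correct and takes essentially the same approach as the paper: both deduce the absence of positive loops as the contrapositive (equivalently, by contradiction) of the Proposition, and then obtain orderability from the Eliashberg--Polterovich characterization via the trivial implication ``no positive loop $\Rightarrow$ no contractible positive loop.'' The paper's version in \SECT{section_Applications} is terser and phrases it as a proof by contradiction, but the logical content is identical.
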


In \SECT{section_proofofMthm}, we complete the proof of \THM{Thm_Mthm} by proving that the contact structure can always be modified locally (using handle attachment
surgeries) so that $\Ga^+(\Si,W') \geq 2$.

\begin{Prop}
If $\Dim\Si \geq 7$, there exists a Liouville domain $(W', \Si')$ which is diffeomorphic to $(W,\Si)$ with
$\Ga^+(\Si',W') \geq 2$.  We can arrange that the Liouville structures agree on the complement of a Darboux ball. 
\end{Prop}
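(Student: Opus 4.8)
The plan is to obtain $(W',\Si')$ from $(W,\Si)$ by a boundary connected sum with a suitably ``exotic'' Weinstein ball, performed inside a Darboux ball of $\Si$. Write $2n=\Dim W$, so that $\Dim\Si=2n-1\ge 7$ forces $n\ge 4$. The first ingredient is a \emph{model piece}: a Weinstein domain $(P,\la_P)$ that is diffeomorphic to $D^{2n}$ but whose action-filtered Rabinowitz Floer homology grows fast, $\Ga^+(\partial P,P)\ge 2$. Such $P$ are produced by McLean's construction of exotic symplectic structures on $\mathbb{R}^{2n}$: one attaches a subcritical handle and then a collection of critical handles to $D^{2n}$ along Legendrian spheres chosen so that the whole handle system cancels smoothly, while the critical handles --- modeled on a Lefschetz fibration whose fiber carries an exact Lagrangian with homologically large loop space --- force $SH$ (hence $\RFH$) to have superlinear, indeed exponential, growth. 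This is the step that uses the dimension hypothesis: one needs enough room both for the Lefschetz fiber and its Lagrangian and for the ambient general-position arguments.

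Given $P$, fix a Darboux ball $B\subset\Si$ and set $W':=W\,\natural\,P$, the boundary connected sum realized by a Weinstein $1$-handle joining a small ball in $B$ to a Darboux ball in $\partial P$. Since $P\cong D^{2n}$, boundary connected sum with $P$ changes nothing smoothly, so $(W',\Si')\cong(W,\Si)$; and since all attaching data lie in a neighborhood of $B$, the Liouville forms $\la'$ and $\la$ agree on the complement of a Darboux ball. By pushing the skeleton of $P$ in through the $1$-handle we may regard $P$ as a Liouville subdomain of $W'$. Now invoke monotonicity of the growth rate: Viterbo-type functoriality --- either directly for $\RFH$ or through its relation to symplectic homology, and in either case tracking the uniform action shift in the transfer map --- yields $\Ga^+(\Si',W')\ge\Ga^+(\partial P,P)\ge 2$, which is the assertion. (Combined with the preceding Corollary this produces the non-orderable $\xi'$ of \THM{Thm_Mthm}.)

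The main obstacle is the localized nature of Step 1: one must present McLean's exotic ball by a handle decomposition \emph{relative to a boundary Darboux chart}, with the first handle glued along an isotropic sphere contained in $B$ and every subsequent critical handle glued along a Legendrian lying over the region $B$ in the successive contact boundary, all the while keeping (i) smooth cancellation of the full system, (ii) the symplectic rigidity responsible for the growth, and (iii) enough control of actions and gradings for the transfer estimate above. Points (i) and (ii) pull in opposite directions, and reconciling them --- unknotting the auxiliary Legendrians away from the model region using $\Dim\Si\ge 5$ and the looseness and Whitney-trick technology available there, without disturbing the Lefschetz-fibration data --- is where the real work of the Proposition lies.
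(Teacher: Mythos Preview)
Your overall strategy is correct and parallels the paper's: construct an exotic Weinstein ball $P\cong D^{2n}$ with $\Ga^+(\partial P,P)\ge 2$, then form $W'=W\,\natural\,P$ via a single Weinstein $1$-handle. The paper differs in how it builds $P$: rather than invoking McLean's Lefschetz-fibration machinery as a black box, it starts from Seidel's contractible $4$-dimensional Liouville domain $A$ with $\SH_*(A)\ne 0$, takes products with $DT^*S^1$ twice, and kills $\pi_1$ by subcritical handle attachment after each product. Oancea's K\"unneth formula and Cieliebak's invariance of $\SH$ under subcritical surgery give $\SH_*(W_4)\cong\SH_*(A)\otimes\SH_*(DT^*T^2)$, hence quadratic growth via the Gauss lattice-point count, and $W_4$ is a ball by the $h$-cobordism theorem. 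This is more explicit and self-contained than your appeal to McLean's exotic balls (which in fact have infinite growth rate), though either yields $\Ga^+\ge 2$. The paper also computes $\SH_*(W')$ directly via Cieliebak's subcritical invariance rather than your Viterbo-transfer monotonicity argument; both are valid.

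Your final paragraph, however, manufactures a difficulty that does not exist. Once $P$ is constructed as an \emph{abstract} Weinstein domain diffeomorphic to $D^{2n}$, the boundary connected sum requires only a single $1$-handle attached along a pair of Darboux balls, one in $\partial W$ and one in $\partial P$. There is no need to re-present the handle decomposition of $P$ ``relative to a boundary Darboux chart of $\Si$'' or to arrange that $P$'s critical handles lie over $B$: the handles of $P$ live entirely on the $P$ side of the $1$-handle and never touch $\Si\setminus B$. Agreement of the Liouville structures outside a Darboux ball is automatic from the $1$-handle attachment, and the diffeomorphism $(W',\Si')\cong(W,\Si)$ is immediate from $P\cong D^{2n}$. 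Your points (i)--(iii) are therefore non-issues; the only genuine work is the construction of $P$ itself, for which you may either cite McLean or follow the paper's recipe above.
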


\subsection{Acknowledgements} The author expresses his gratitude to Mark McLean and Alexandru Oancea for several helpful discussions, and to Peter Albers for countless helpful discussions and encouragement.

\section{Preliminary Notions}

We begin by defining some relevant terms and fixing terminology.  Throughout, $\cpairex$ is assumed to be a smooth closed contact manifold.  The Reeb vector field of $\al$ will be denoted $R$.

\begin{Def} 
A positive loop of contactomorphisms (or P-loop) is a smooth map 
\bean
\Map{\vp = \{ \vpt \}_{t \in \R/\Z}}{\R/\Z \X \Si}{\Si} 
\eea
satisfying
\bean
&\DF\vpt\xi=\xi,&&\io_{\dotvp_t}\al>0 \quad \FA t \in \R/\Z, &&& \vp^0=\mr{Id},  
\eea
where we employ the notation $\dotvpt:=\dd{t} \vpt.$ We denote the set of all P-loops by 
\beqn\label{eq_PLoopset} \pls\cpair, \eeq
or simply $\pls$ where no confusion is possible.
\end{Def}

\begin{Rmk} 
For any $g \in C^\INF(\Si),$ one readily observes that 
\beqn
\pls\cpair = \pls(\Si,e^g \al),
\eeq
i.e. although it is essential that $\xi$ be coorientable, the notion of positivity is independent of the choice of contact form within a given coorientation class.
\end{Rmk}

\begin{Def}\label{Def_DP}
Suppose $\vp \in C^\INF(\mr{Cont}\cpair)$ is a path of contactomorphisms.  Following \cite{AFNMI} (and hence in turn \cite{Givental_The_nonlinear_Maslov_index}), we call a point $x \in \mr{Fix}(\vp^t)$ a \emph{discriminant point} of $\vpt$ if 
$$(\pb{\vp_t}\al)_x=\al_x;$$
we call $(x,t)$ a \emph{discriminant pair} of $\vp.$
\end{Def}

\begin{Rmk}
One observes that the notion of discriminant point is independent of the contact form defining $\xi.$  We shall see below that this is not an analytical curiosity, but rather a manifestation of the topological fact that $\vp^t$ is Lefschetz degenerate at $x$, see \DEF{Def_Lefschetz} and \LEM{Lem_mandatorydegeneracy}.
\end{Rmk}

\begin{Def} 
We say that a contact manifold is \emph{Liouville-fillable} if it arises as the boundary of a Liouville domain, i.e. a compact symplectic manifold $(W,\om)$ satisfying
\begin{enumerate} 
\item $\om$ is exact, $\om=\d\la$
\item $(W,\om)$ admits a Liouville vector field $X$; that is, a vector field $X$ satisfying $\LD{X}\om=\om$ whose negative flow is complete
\item $X$ is transverse to $\Si=\partial W$.
\end{enumerate}
\end{Def}

Liouville domains behave well under Cartesian product; that is, if $W_1$ and $W_2$ are Liouville domains, then $W_1 \X W_2$ admits a natural Liouville structure after smoothing the boundary, see \cite{Oancea_Kunneth}.

\subsection{Rabinowitz Floer Homology}
Rabinowitz Floer homology was developed in \cite{CF09} as a fixed-energy analogue of symplectic homology.  It is defined (under suitable assumptions) for
exact convex hypersurfaces in non-compact symplectic manifolds.  Let $(M,\om=\d\la)$ be a non-compact exact symplectic manifold, and let
$\Map{H}{M}\R$ be a smooth autonomous Hamiltonian satisfying the following assumptions:
\begin{enumerate} 
\item $0$ is a regular value of $H$
\item $\Si:=H^{-1}(0)$ is connected and separates $M$ into two components, with $H^{-1}(-\INF,0]$ compact
\item $\d H$ is compactly supported.
\end{enumerate}
The \emph{Rabinowitz action functional} is defined as follows: 
\bea
\mc{A}_H &: C^{\INF}(S^1,M) \X \R \to \R, \\ 
\cp &\mapsto  \int_{S^1}\pb{u}\la - \eta \int_0^1 H(u(t)) \d t. 
\eea
One computes that the critical points of $\mc{A}_H$ are pairs $\cp$ satisfying 
\bea
&\dotu = \eta \hvf{H}(u(t)), \\
&H(u(t)) =0. 
\eea

\nl\noindent
The Rabinowitz Floer chain complex $\mr{RFC}_*(\Si,M,H)$ is a free $\Z_2$ module generated by the critical points of the action functional $\autraf$.  Its homology $\RFH_*(\Si,M)$ is defined by counting Floer
trajectories between critical points, where the grading is given by the Conley-Zehnder index.  We refer to
\cite{CF09}, \cite{CFO09} for more details, and to \cite{AFNMI} for the extension to positive periodic
contact Hamiltonians which we will need here.

\begin{Rmk}\label{Rmk_RFH+}
Recall that Rabinowitz Floer homology is equipped with an action filtration: let 
$$\si(\autraf):=\Set{c}\R{\exists \In{\cp}{\crit{\autraf}} \mbox{ with } \autraf\cp=c}$$
denote the action spectrum of $\autraf$. Let $\ep>0$ be such that
$-\ep \notin \si(\autraf)$, and $c_n$ be an increasing sequence of positive numbers, disjoint from the action spectrum, which
diverges to infinity.  Critical points of the action functional with action in $(-\ep, c_n)$ generate a subcomplex of $\mr{RFC}_*(\Si,M,H)$,
whose homology is denoted 
\beqn
\RFH^{(-\ep,c_n)}_*(\Si,M,H) =: V_n.
\eeq
If $m \leq n$, there is a chain homomorphism induced by inclusion of
complexes 
\beqn
\io_{m,n}: V_m \to V_n.
\eeq
Taken together, these form a directed system whose direct limit we denote by 
\beqn
\RFH^+_*(\Si,M,H):=\lim_{\lra} V_n
\eeq
for $\ep$ chosen sufficiently small. 
\end{Rmk}

We now quote two fundamental results which will be very useful. The first relates $\RFH^+$ with the symplectic
homology of $M$, while the second shows that $\RFH$ depends only on the compact region of $M$ which $\Si$ bounds, which we denote by $W$.  This
makes it possible to define $\RFH^+$ for a Liouville domain $W$.

\begin{Prop}[cf. \cite{CFO09}, Prop 1.4]\label{Prop_CFO_1.4}
$\RFH^+_*(\Si,M)$ is related to $\SH_*(M)$ by the following long exact sequence:
\beq
\cdots \to H^{-*+n}(W,\Si) \to \SH_*(M) \to \RFH^+_*(\Si,M) \to \H^{-*+1+n}(W,\Si) \to \cdots.
\eeq 
\end{Prop}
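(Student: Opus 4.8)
This statement is Proposition 1.4 of \cite{CFO09}; I sketch the approach one would take to prove it. The plan is to exploit the action filtration on the Rabinowitz complex: $\RFH^+$ sees only the part of $\RFH$ of action $>-\ep$, and that part decomposes, along the action filtration, into a low-action piece supported near the critical value $0$ and a positive piece supported on the closed Reeb orbits of positive action. One identifies the first piece with the topology of the filling and the second with symplectic homology, and the stated long exact sequence then falls out by comparison with the tautological long exact sequence of symplectic homology. Concretely I would fix an autonomous defining Hamiltonian $H$ for $\autraf$ on the completion $\widehat W = W\cup_\Si(\Si\X[1,\INF))$, normalized so that $\crit{\autraf}$ splits according to the sign of the Lagrange multiplier $\eta$ --- equivalently the action --- into the closed Reeb orbits with $\eta>0$, the Morse--Bott manifold $\Si\X\{0\}$ at action $0$, and the closed Reeb orbits with $\eta<0$. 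For $\ep>0$ small, with $c_n$ an increasing sequence diverging to infinity disjoint from $\si(\autraf)$, the action window $(-\ep,c_n)$ cuts into $(-\ep,\ep)$ and $(\ep,c_n)$, giving a short exact sequence of subcomplexes of $\mr{RFC}_*(\Si,\widehat W,H)$; passing to homology and then to the direct limit over $n$ yields a long exact sequence relating $\RFH^+_*(\Si,\widehat W)$, the truncated group $\RFH^{(-\ep,\ep)}_*(\Si,\widehat W)$, and $\varinjlim_n\RFH^{(\ep,c_n)}_*(\Si,\widehat W)$.

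Next one identifies the two pieces. For the low-action piece I would carry out the Morse--Bott analysis of $\autraf$ along its critical manifold $\Si\X\{0\}$: the linearized operator in the directions normal to $\Si\X\{0\}$ inside $C^\INF(S^1,\widehat W)\X\R$ has a spectral flow, hence a Conley--Zehnder index, which together with the fact that $\Si=\partial W$ identifies $\RFH^{(-\ep,\ep)}_*(\Si,\widehat W)$ with the relative cohomology $H^{-*+n}(W,\Si)$ --- this is the step at which the topology of the filling enters. For the positive piece one compares with symplectic homology via a no-escape estimate: the Floer cylinders of $\autraf$ between positive-action Reeb-orbit generators of bounded action all remain in a fixed compact collar of $\widehat W$ on which $\autraf$ has the same dynamics as an admissible Hamiltonian for $\SH_*(\widehat W)$, so the positive-orbit subquotient complexes agree, compatibly with the structure maps $\io_{m,n}$, and taking $\varinjlim_n$ gives $\varinjlim_n\RFH^{(\ep,c_n)}_*(\Si,\widehat W)\cong\SH^+_*(\widehat W)$. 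Feeding these identifications into the long exact sequence of the first paragraph, and using the standard invariance of symplectic homology to write $\SH_*(M)\cong\SH_*(\widehat W)$, one obtains --- after checking that the connecting map matches the corresponding one in the tautological sequence $\cdots\to H^{-*+n}(W,\Si)\to\SH_*(M)\to\SH^+_*(\widehat W)\to H^{-*+1+n}(W,\Si)\to\cdots$ --- precisely the long exact sequence in the statement.

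The main obstacle is the analytic heart of the second paragraph: proving the no-escape and compactness estimates that make the positive-orbit comparison valid and compatible with the direct-limit structure. The Morse--Bott computation at $\Si\X\{0\}$ is the other delicate point --- one has to arrange transversality and coherent orientations along the critical manifold carefully enough to pin its contribution down to $H^{-*+n}(W,\Si)$ exactly, and to verify that the two long exact sequences share the same connecting map. Granting these, the rest is formal homological algebra of the action filtration, of the direct limit, and of the resulting exact triangles.
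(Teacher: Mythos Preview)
The paper does not prove this proposition; it is quoted verbatim from \cite{CFO09} (as the attribution ``cf.\ \cite{CFO09}, Prop 1.4'' indicates) and used as a black box, with only the follow-up remark that the long exact sequence is compatible with action filtrations. So there is no ``paper's own proof'' to compare against.

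That said, your sketch is a faithful outline of the strategy in \cite{CFO09}: split the action window $(-\ep,c_n)$ at a small $\ep$, identify the low-action piece with singular (co)homology via the Morse--Bott manifold $\Si$, identify the positive-action piece with $\SH^+$, and then splice into the tautological long exact sequence for symplectic homology. One caution on the details: the identification of the low-action piece is not quite as you state it --- in \cite{CFO09} the constant orbits at action $0$ contribute $H_{*+n-1}(\Si)$ (Morse homology of the critical manifold with the appropriate index shift), not directly $H^{-*+n}(W,\Si)$; the relative cohomology of the filling enters only after one matches up with the tautological sequence $\cdots \to H^{-*+n}(W,\Si) \to \SH_* \to \SH_*^+ \to \cdots$ and uses Poincar\'e--Lefschetz duality for $(W,\Si)$. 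Your remark about the analytic heart (no-escape, compactness, matching connecting maps) is well placed: those are exactly the nontrivial steps carried out in \cite{CFO09}, and your sketch correctly flags them rather than glossing over them.
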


\begin{Rmk}
Although not explicitly stated, \cite{CFO09} shows that the above long exact sequence is compatible with action filtrations.
\end{Rmk}

\begin{Prop}[cf. \cite{CFO09}, Prop 3.1]\label{Prop_CFO_3.1}
$\RFH^+_*(\Si,M)$ depends only on the exact symplectic manifold $W$ and not on the
ambient manifold $M$. 
\end{Prop}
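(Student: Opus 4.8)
The plan is to show that, for every admissible ambient manifold $M$ in which $\Si$ bounds the given Liouville domain $W$, the filtered invariant $\RFH^+_*(\Si,M)$ is canonically isomorphic to the one computed inside the completion $\widehat W:=W\cup_\Si([1,\infty)\X\Si)$ of $W$; since $\widehat W$ depends only on $W$, this yields the proposition. The mechanism is a confinement principle: with a well-chosen defining Hamiltonian and almost complex structure, all generators and all connecting trajectories relevant to a fixed action window are forced into $W$ together with an arbitrarily short piece of its symplectization collar, a region on which $M$ and $\widehat W$ agree canonically. The tool that produces this confinement is a neck-stretching argument along $\Si\subset M$, combined with the a priori $C^0$- and Lagrange-multiplier estimates for the Rabinowitz action functional from \cite{CF09}.

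Concretely, I would take the Hamiltonian to be radial on a two-sided collar $((1-\delta,1+\delta)\X\Si,\d(r\al))$ of $\Si$, say $H=h(r)$ there for a fixed concave $h$ with $h(1)=0$, $h'(1)>0$, which becomes constant before $r=1+\delta$ and is extended by that constant over the rest of $M$; the almost complex structure is chosen cylindrical on the collar and adapted to stretching the neck along $\{r=1\}=\Si$. The critical-point equation $H(u(t))\equiv0$ then puts every generator on $\Si$, and a direct computation bounds the Lagrange multiplier of a generator of action in $(-\ep,c_n)$ by $|\eta|\le c_n/h'(1)$. Stretching the neck by a parameter $L\to\infty$: the no-escape estimates prevent trajectories from escaping to infinity, while by a standard symplectic-area estimate a connecting trajectory whose image meets both ends of the stretched neck carries area growing without bound in $L$, contradicting the energy bound $c_n+\ep$; hence for $L$ large every such trajectory has image in $W$ together with the collar, i.e.\ lies in $\widehat W$, and the relevant moduli spaces stabilize. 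This gives a chain isomorphism $\mr{RFC}^{(-\ep,c_n)}_*(\Si,M,H)\cong\mr{RFC}^{(-\ep,c_n)}_*(\Si,\widehat W,H)$. Making this neck-stretching/SFT-type compactness argument rigorous in the Rabinowitz setting — in particular controlling $\eta$ under stretching and ruling out limit buildings with components in the completion of $M\setminus W$ — is the step I expect to be the main obstacle; the analytic ingredients are supplied by \cite{CF09} and, in the filtered form needed here, \cite{CFO09}.

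Granting this, the conclusion is formal. For each $n$ the filtered chain group $\mr{RFC}^{(-\ep,c_n)}_*(\Si,M,H)$ has its generators on the common hypersurface $\Si$, in Conley--Zehnder degrees computed from the linearized flow along an orbit in $W$ and a symplectic trivialization of $TM|_W=TW$, and its differential counts the rigid trajectories of the stabilized moduli spaces, all of which lie in $\widehat W$; thus the whole filtered complex, together with its differential and grading, is intrinsic to $W$. The identification acts as the identity on generators, so it intertwines the inclusion-induced maps $\io_{m,n}$, and passing to the direct limit over $n$ yields $\RFH^+_*(\Si,M)\cong\RFH^+_*(\Si,\widehat W)$ for any admissible $M$, hence $\RFH^+_*(\Si,M_1)\cong\RFH^+_*(\Si,M_2)$. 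Independence of the residual choices — the function $h$, the cylindrical $J$, and the Morse--Bott perturbation at the constant critical locus $\cong\Si$ that produces the $H^*(W,\Si)$-contribution — follows from the usual continuation argument, whose trajectories are confined by the same neck-stretching estimate.
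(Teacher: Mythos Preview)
The paper does not supply its own proof of this proposition; it is quoted verbatim from \cite{CFO09} (their Proposition~3.1), so there is no in-paper argument to compare against. That said, your route differs from the one actually used in \cite{CFO09}. There the independence is obtained by a direct confinement (``no escape'') argument: one takes the defining Hamiltonian to be constant outside $W$ together with a fixed short collar, chooses the almost complex structure to be of contact type (cylindrical) on that collar, and applies a maximum principle to the radial coordinate to conclude that every Rabinowitz--Floer trajectory with endpoints on $\Si$ stays in the collar. Combined with the a~priori bound on the Lagrange multiplier from \cite{CF09}, this gives $C^0$-bounds depending only on the action window and on $W$, and the identification of filtered complexes follows with no stretching whatsoever.

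Your neck-stretching scheme can be pushed through, but as written it has a genuine sticking point: you stretch along $\Si=\{r=1\}$ itself, which is exactly the hypersurface on which every generator lives. In the SFT limit the asymptotic orbits then sit at the junction of the building, and to exclude nontrivial upper components in the completion of $M\setminus W$ you must already know that trajectories do not enter that region---precisely the confinement statement you are trying to prove, so the argument is in danger of being circular. If you insist on stretching, place the neck at a level $\{r=1+\delta'\}$ strictly outside $\Si$, where $H$ is already constant; then all generators and the Hamiltonian term are supported on the $W$-side, and your area-versus-energy estimate does what you want. But this is strictly more work than the maximum-principle argument of \cite{CFO09}, which delivers the confinement at finite neck length.
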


\begin{Rmk}
We can now define $\RFH^+$ for a Liouville domain $(W,\Si)$ by attaching the symplectization of $\Si$ along $\Si = \partial W$.  \PROP{Prop_CFO_3.1} shows that this is well-defined. In the sequel, we will abuse notation and denote the resulting homology simply by
$\RFH^+_*(\Si,W)$.
\end{Rmk}

We now wish to study the growth of $\RFH^{(-\ep,c_n)}_*(\Si,W)$ as $n$ gets large.  This will allow us to extract more
refined quantitative information than simply the total dimension of the homology, which is often infinite.
We adapt a definition given by
M. McLean in the context of symplectic homology \cite{McLean_Computability}.  Form a sequence of integers
$a(n)$ defined to be the rank of the limiting map:
\beqn
a(n):=\Dim{\mr{Im}(\Map{\io_{n,\INF}}{V_n}{\RFH^+_*(\Sigma,W)})}.
\eeq

\begin{Def} 
The positive growth rate of $\RFH(\Si,W)$ is defined to be 
\beqn
\Ga^+(\Si,W) := \LS_{n\to \INF}\frac{\log(a(n))}{\log(n)}. 
\eeq
\end{Def}

\begin{Rmk} 
A priori, it is not clear that the definition above is, as indicated, independent of choices made.  McLean shows something stronger:
namely, that the growth rate of symplectic homology is defined up to Liouville isomorphism.  \PROP{Prop_CFO_1.4} shows the same is true for
$\RFH^+$. 
\end{Rmk}

\begin{Rmk}
$\Ga^+(\Si,W)$ takes values in $\{-\INF \} \cup [0,\INF]$.  A finite growth rate indicates polynomial growth, while an infinite
growth rate implies super-polynomial (e.g. exponential) growth.
\end{Rmk}

\subsection{The Rabinowitz action functional associated to $\vp$}
Now we will adopt the setup from the introduction, namely that $W$ is a Liouville domain with boundary $\cpair$, and that $\cpair$ supports a
positive loop $\vp$.  We recall the contact Hamiltonian and associated action functional associated to $\vp$.  This is first defined
on the symplectization $S\Si$ of $\Si$, and then extended using cutoff functions to all of 
\beqn
\hat{W}:=W \cup_\Si \left( [1,\INF)\X \Si \right) 
\eeq
in such a way that no additional critical points are introduced \cite{AFNMI}.  We stress that although the critical set
does not depend on the choice of filling, it can happen that the resulting \emph{homology} does.

\begin{Def}\label{Def_Mapsetup} 
The contact Hamiltonian associated to $\vp$ is given by
\bean
&\Map{\myCHt}{M \X S^1}\R \\
&\myCHt(\vpt(x)) = \dfm{\al_{\vpt(x)}}{\dd{t} \vpt(x)}.
\eea
The lifted contact Hamiltonian $\Map{\myHt}{S\Si}\R$ is given by $\myHt = r\myCHt$ and its Hamiltonian flow is given by
$$\phit(r,x)=\left( \frac{r}{\rhot(x)} , \vpt(x) \right),$$
where $\rhot:=\io_R \pb{\vp_t}{\al}.$
\end{Def}

\begin{Def} 
The action functional associated to $\vp$ is defined as
\bean
\raf &: C^\INF(S^1,S\Si) \X \R \to \R, \\	
\cp &\mapsto \int_{S^1} \pb{u}\la - \eta \left. \int_0^1 \left[ \myHet(u(t))-1 \right] \d t. \right.
\eea
The critical points of $\raf$ are pairs $(u,\eta)$ satisfying
\bea\label{eq_cpconditions}
&\dotu=\eta\hvf\myHet, \\
&\myHe(u(1)) = 1.
\eea
\end{Def}

\begin{Rmk}\label{Rmk_soh}
We point out that for $\cp \in \mr{Crit}(\raf)$, $\raf\cp=\eta$, i.e. the action of a critical point is equal to the period. 
\end{Rmk}

\section{Transversality and Admissible P-loops}

To define $\RFH$ for the lifted contact Hamiltonian associated to $\vp \in \pls$, the problem arises that although 
$$\mr{Crit}_{k}(\raf):=\Set{\cp}{\crit\raf}{\raf\cp=k}$$ 
may be identified with $\Si$ by projection for each $k \in \Z$, the identification is somewhat noncanonical.  Therefore, we ensure
that for all critical points with integer period, $r(u(0))=r(u(1))=1$, i.e. that the start and end point lies in $\Si.$

\nl \noindent
\underline{Rescaling the contact form:} Fix $\vp \in \pls$ and define a new contact form $\t{\al}$ for $\xi$ on $\Si$ by setting 
\begin{equation}\label{eq_normalizecontactform}
\t{\al}=\frac{\al}{h_0}.
\end{equation}
We observe that we may view $(\Si,\t{\al})$ as Liouville isotopic to $\cpair$ within $\hat{W}$.  Because $\RFH$ is invariant under such isotopies
\footnote{We were unable to locate a direct proof of this in the literature.  An indirect proof combines analogous results in symplectic homology with the long exact sequences in \cite{CFO09}.  An alternative and more parsimonious approach would be to define $\RFH$ using $\t{\al}$, since all of our results require only that $(\Si,\t{\al})$ is Liouville-fillable.}
, all statements about $\RFH$ do not depend on this choice of rescaling.  Since 
\beqn
\t{h}_0=\t{\al}(\dotvp^0)=1,
\eeq
we obtain the desired property about critical points.  To wit, for $k \in \Z$, periodicity of $\vp$ implies that 
\beqn
\myCH{k}(\vp^k(x))=h_0(x), \quad \rho_{k}(x)=1.
\eeq
Therefore, every $x \in \Si$ corresponds to a critical point, and by the second requirement of \eqref{eq_cpconditions}, 
$$r(u_x(1))=\frac{1}{\t{h}_k(\vp^k(x))}=1.$$

\nl\noindent
The goal of the next two lemmas is to sketch how transversality can be achieved for $\raf$, which here means $\raf$ is Morse-Bott.  Full statements and proofs are given later in this section - here we illustrate the conceptual framework.  The proofs given in \SECT{section_Applications} all rely on the fact that given an \emph{admissible} $\vp \in \pls$, i.e. one for which the Floer homology is well-defined, the growth rate of the homology is bounded above by the growth rate of the chain complex, which is linear.  Given any P-loop, the strategy becomes clear: use its existence to prove the existence of an admissible P-loop.  Assuming for the moment that a generic perturbation yields an admissible \emph{path} of contactomorphisms, care must be taken so that the result is still a loop.  Here the structure of the Rabinowitz action functional is of great help.  By \RMK{Rmk_soh}, the critical values of $\raf$ are simply the periods of the periodic orbits, so it is possible to perturb $\vp$ \emph{in time}, rather than by action.  In addition, imposing the loop condition requires that $\raf$ is Morse-Bott for integer critical values.

\begin{Lemma}\label{Lem_lnonint} Let $\vp$ be a positive loop of contactomorphisms.  Then there exist $\tvp \in \pls$ and a constant $\ep>0$ which depends only on $\vp$ such that 
\begin{itemize} 
\item If $\mr{dist}(t,\Z)\leq \ep,$ $\tvpt=\vpt$
\item Any critical point $\cp \in \mr{Crit}(\mc{A}_{\tvp})$ with $\eta \notin \Z$ is Morse.
\end{itemize}
\end{Lemma}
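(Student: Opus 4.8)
The plan is to realize $\tvp$ as an arbitrarily small, time-localized perturbation of $\vp$ and to control the non-integer-period critical locus of the associated Rabinowitz action functional by a parametric Sard--Smale argument. Work throughout with the rescaled contact form $\t{\al}=\al/h_0$ of \eqref{eq_normalizecontactform}. Fix, once and for all, a constant $\ep\in(0,\tfrac12)$ that is small enough; this is the only place the constant enters and it depends on $\vp$ alone (any sufficiently small $\ep$ works: take it small enough that the perturbation class introduced below is nonempty and that small perturbations of $\vp$ remain positive). Let
\[
\mc{P}=\bigl\{\,\psi=\{\psi^t\}_{t\in\R/\Z}\ :\ \psi^t\in\mr{Cont}\cpair,\ \psi^t=\mr{Id}\text{ whenever }\mr{dist}(t,\Z)\le\ep\,\bigr\},
\]
completed in a suitable Banach topology; this is a Banach manifold whose tangent space at $\mr{Id}$ consists of time-dependent contact vector fields supported in $\{\mr{dist}(t,\Z)>\ep\}$. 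To each small $\psi\in\mc{P}$ assign the candidate P-loop $\vp_\psi^t:=\psi^t\circ\vp^t$. Since $\psi^t=\mr{Id}$ near integer times, $\vp_\psi$ agrees with $\vp$ on $\{\mr{dist}(t,\Z)\le\ep\}$; in particular $\vp_\psi^0=\vp_\psi^1=\mr{Id}$, so $\vp_\psi$ is again a loop, and for $\psi$ small it is positive, being $C^1$-close to $\vp$ by compactness of $\R/\Z\times\Si$. Thus $\vp_\psi\in\pls$ and the first bullet holds for every $\psi\in\mc{P}$; it remains to choose $\psi$ so that all critical pairs of $\mc{A}_{\vp_\psi}$ with $\eta\notin\Z$ are Morse.

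Exhaust $\R\setminus\Z$ by the compact windows $W_n:=\{\eta\in\R:\mr{dist}(\eta,\Z)\ge 1/n,\ |\eta|\le n\}$. For fixed $n$, I would form the universal moduli space
\[
\widetilde{\mc{M}}_n:=\bigl\{(u,\eta,\psi)\ :\ \psi\in\mc{P},\ \cp\in\mr{Crit}(\mc{A}_{\vp_\psi}),\ \eta\in W_n\,\bigr\},
\]
regarded as the zero set of a Fredholm section of a Banach bundle over a neighbourhood in $C^\INF(S^1,S\Si)\times\R\times\mc{P}$. In contrast with the integer-period critical locus, which after the normalization is the whole of $\Si$ and is forced to be degenerate (cf. \DEF{Def_DP} and \LEM{Lem_mandatorydegeneracy}), a critical pair $\cp$ with $\eta\notin\Z$ is not forced to be degenerate: it is Morse precisely when the $\psi$-fixed linearization of the section is an isomorphism; equivalently, since a critical orbit is determined by $u(0)\in\Si$ and $\eta$ through \eqref{eq_cpconditions}, when the corresponding finite-dimensional translated-point (discriminant) equation on $\Si\times\R$ is cut out transversally. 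Granting that the \emph{full} linearization of the section --- variations in $\psi$ included --- is surjective along $\widetilde{\mc{M}}_n$, this space is a Banach manifold and the projection $\widetilde{\mc{M}}_n\to\mc{P}$ is Fredholm of index $0$; Sard--Smale then yields a residual set $\mc{P}_n\subseteq\mc{P}$ for which every critical pair of $\mc{A}_{\vp_\psi}$ with $\eta\in W_n$ is Morse.

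The surjectivity claim is the real obstacle. Its mechanism is standard in spirit: a critical orbit $u$ with $\eta\notin\Z$ solves the first equation of \eqref{eq_cpconditions} on all of $t\in[0,1]$, so it passes through times $t_0\in(\ep,1-\ep)$, precisely the region where $\psi$ is free; perturbing the $1$-jet of $\psi$ near $(t_0,u(t_0))$ deforms the linearized return (discriminant) map in any prescribed direction without touching the constraint, and a unique-continuation argument then forces every cokernel element to vanish. The care is in choosing $t_0$ so that $u(t_0)$ is not revisited by $u$ nearby --- a somewhere-injectivity point, whose existence for these leafwise-intersection-type problems must be argued rather than assumed (cf. \cite{AFNMI}) --- and in checking that transversality of the finite-dimensional equation is genuinely equivalent to invertibility of the Hessian of $\mc{A}_{\vp_\psi}$ at $\cp$, which rests on the self-adjoint Fredholm structure of the Rabinowitz action functional \cite{CF09,AFNMI}. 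Granting all this, $\mc{P}_\infty:=\bigcap_{n}\mc{P}_n$ is residual, hence dense, hence meets every neighbourhood of $\mr{Id}$ in $\mc{P}$; picking a smooth $\psi\in\mc{P}_\infty$ close enough to $\mr{Id}$ (the usual approximation yielding smooth representatives of a residual set) and setting $\tvp:=\vp_\psi$, one has $\tvp=\vp$ on $\{\mr{dist}(t,\Z)\le\ep\}$ with $\ep$ depending only on $\vp$, while every critical pair of $\mc{A}_{\tvp}$ with $\eta\notin\Z$ lies in some $W_n$ and is therefore Morse.
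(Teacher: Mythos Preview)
Your Sard--Smale outline is reasonable in spirit, but there is a genuine gap at critical pairs with $0<|\eta|<\ep$. The perturbation $\psi^s$ is parametrized by the \emph{contact time} $s\in\R/\Z$ and is supported in $\{\mr{dist}(s,\Z)>\ep\}$, whereas the defining equations \eqref{eq_cpconditions} for a critical pair $(u,\eta)$ involve $\vp_\psi^s$ only at the contact times $s=\eta t$, $t\in[0,1]$. When $0<\eta<\ep$ these lie in $[0,\eta]\subset[0,\ep]$, so $\vp_\psi^s=\vp^s$ along the entire orbit; both the critical equation and its linearization are then completely independent of $\psi$, the universal linearization cannot be surjective at such a point, and Sard--Smale does not apply on any $W_n$ with $1/n<\ep$. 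Your sentence ``it passes through times $t_0\in(\ep,1-\ep)$, precisely the region where $\psi$ is free'' conflates the domain parameter $t\in[0,1]$ with the contact time $\eta t\in\R/\Z$; the support condition on $\psi$ constrains the latter, not the former. Correspondingly, the constraints you place on $\ep$ (that $\mc{P}$ be nonempty and that nearby loops remain positive) are not the operative ones and do not exclude this situation.

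The paper's argument closes exactly this gap, and in doing so pins down what $\ep$ actually is: by compactness of $\Si$ and nonvanishing of $\dotvp$, there exists $\ep>0$ depending only on $\vp$ such that $\vp^\eta$ has no fixed points whenever $0<\mr{dist}(\eta,\Z)<\ep$, hence $\si(\raf)$ is disjoint from these windows. There are then simply no non-integer critical points in the region the perturbation cannot reach, and one perturbs rel endpoints on $[\ep,1-\ep]$ and extends by periodicity. The paper also proceeds more geometrically than via Sard--Smale on the loop space: it identifies $\crit\raf$ with discriminant pairs in $\Si\times\R$ and obtains Morseness from the finite-dimensional statement that the parametrized graph $\BAR\vp$ is transverse to the diagonal (see \PROP{Prop_MBatZ} and \PROP{Prop_boom}), which incidentally sidesteps the somewhere-injectivity issue you flag. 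If you prefer to keep your Sard--Smale framework, the fix is to insert the paper's short-time fixed-point-free argument first and choose $\ep$ accordingly; the rest of your scheme then goes through.
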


\begin{proof}[(Sketch)] We focus solely on $\eta \in (0,1)$ and extend by periodicity.  We claim there exists $\ep
> 0$ such that 
$$\si(\raf) \cap ((0,\ep)\cup(1-\ep,1))=\varnothing.$$
Indeed, by compactness of $\Si$ and nonvanishing of $\dotvp$, there exists $\ep>0$ such that $\vpe(x) \neq x$ for any $\eta \in (0,\ep) \cup (1-\ep,1).$  Hence any such $\eta$ is vacuously a regular value of $\raf.$  It is then possible to perturb $\vp$ to $\tvp$ rel endpoints, i.e. so that the perturbation is trivial on the set where $\vp$ is known to be regular.  By choosing the perturbation sufficiently small in $C^1$-norm, the result is still a positive loop of contactomorphisms. We refer to \PROP{Prop_MBatZ} and \PROP{Prop_boom} for details.
\end{proof}

\begin{Lemma}\label{Lem_mb} The Rabinowitz action functional $\raf$ is Morse-Bott when restricted to integral critical values, in the sense that the critical set may be
identified with $\Si=\{ r=1 \}$, and the kernel of the Hessian of $\raf$ may be identified with $T\Si$.
\end{Lemma}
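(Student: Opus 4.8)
The statement to prove is that the Rabinowitz action functional $\raf$ associated to $\vp$ is Morse-Bott along integral critical values, with critical set identified with $\Si = \{r=1\}$ and Hessian-kernel identified with $T\Si$. Let me think about the structure.

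We have the lifted contact Hamiltonian $\myHt = r\myCHt$ on the symplectization $S\Si = (0,\infty) \times \Si$, with Hamiltonian flow $\phit(r,x) = (r/\rhot(x), \vpt(x))$. The action functional is
$$\raf(u,\eta) = \int_{S^1}\pb{u}\la - \eta\int_0^1[\myHet(u(t)) - 1]\,dt.$$
Critical points satisfy $\dotu = \eta \hvf\myHet$ and $\myHe(u(1)) = 1$.

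So the critical points at integer period: after the rescaling $\t\al = \al/h_0$ so $\t h_0 = 1$, for each $k \in \Z$ each $x \in \Si$ gives a critical point $u_x$ with $u_x(0) = u_x(1) = (1,x)$ (start and end in $\Si$), period $\eta = k$. So $\mathrm{Crit}_k(\raf) \cong \Si$ via the initial-point map.

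**Key steps:**

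1. **Identify the critical set with $\Si$.** This is essentially done in the excerpt: critical points at integer period $k$ correspond to the flow lines $t \mapsto \phit(1,x)$, and the condition $r(u_x(1)) = 1$ holds by the rescaling. So the critical manifold $\mathrm{Crit}_k(\raf)$ is a submanifold of the loop space diffeomorphic to $\Si$.

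2. **Compute the Hessian.** At a critical point $(u,\eta)$, the Hessian of $\raf$ is a quadratic form on $T_{(u,\eta)}(C^\infty(S^1, S\Si) \times \R)$. The tangent space at a loop $u$ consists of sections $\hat u$ of $u^*T(S\Si)$, plus the $\eta$-variation $\hat\eta \in \R$. The standard formula (from Cieliebak-Frauenfelder) for the Hessian is:
$$d^2\raf(u,\eta)[(\hat u, \hat\eta), (\hat u, \hat\eta)] = \int_0^1 \omega(\hat u, \nabla_t \hat u - \eta \nabla_{\hat u}\hvf\myHe)\,dt - 2\hat\eta\int_0^1 d\myHe(\hat u)\,dt.$$
The kernel consists of pairs $(\hat u, \hat\eta)$ solving the linearized equations:
$$\nabla_t\hat u = \eta\, D\hvf\myHe(\hat u) + \hat\eta\, \hvf\myHe, \qquad \int_0^1 d\myHe(u(t))[\hat u(t)]\,dt = 0,$$
with appropriate periodicity/boundary conditions.

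3. **Show the kernel is $T\Si$.** The linearized flow equation's solutions are parametrized by initial conditions $\hat u(0) \in T_{(1,x)}(S\Si)$, evolved by the linearized Hamiltonian flow $D\phit$, plus a particular solution from the $\hat\eta$ term. The closed-up (loop) condition plus the constraint $d\myHe(\hat u) = 0$ should cut this down. Since $\phi^k$ fixes $(1,x)$-type points only... wait, actually $\phi^k(1,x) = (1/\rho_k(x), \vp^k(x)) = (1, x)$ by periodicity — so $\phi^k = \mathrm{id}$ on $\{r=1\}$? No: $\vp^k = \mathrm{id}$ since $\vp$ is a loop, and $\rho_k = 1$, so indeed $\phi^k|_{r=1} = \mathrm{id}$. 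But $\phi^k$ on all of $S\Si$: $\phi^k(r,x) = (r/\rho_k(x), \vp^k(x)) = (r, x)$ — actually $\phi^k = \mathrm{id}$ on all of $S\Si$! Because $\vp$ is a loop. So $D\phi^k = \mathrm{id}$ everywhere. That means the monodromy is trivial, so the linearized-loop condition $\hat u(1) = \hat u(0)$ is automatic for the homogeneous part, and the space of solutions to the flow equation closing up is large — but then the constraint $\int_0^1 d\myHe(\hat u) = 0$ and the structure of the $\hat\eta$ term must do the work. Hmm — I need to be careful: the linearization of the flow is $D\phit$ along the orbit, which is *not* the same as $D\phi^k$ at a point in general, but here since $\phi^k = \mathrm{id}$... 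Actually the linearized flow operator along the orbit $t\mapsto \phit(1,x)$ has monodromy $D(\phi^k)_{(1,x)} = \mathrm{id}$.

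Wait, this makes the kernel computation delicate. Let me reconsider. The space of solutions to $\nabla_t \hat u = \eta D\hvf\myHe(\hat u)$ that are 1-periodic (for $\eta = k$) — since monodromy is trivial — is the full tangent space $T_{(1,x)}(S\Si)$, which is $(n+1)$-dimensional (if $\dim\Si = 2n-1$, then $\dim S\Si = 2n$). Then we add the $\hat\eta$-particular-solution direction (one more dimension, generically) and impose the one constraint $\int_0^1 d\myHe(\hat u)\,dt = 0$. So the kernel has dimension $\leq 2n$... but $T\Si$ has dimension $2n-1$. So I need a more careful accounting: one direction is killed by the constraint, and we should check that the $\hat\eta$-direction and the Reeb/radial direction interact correctly so that the kernel is exactly $T\Si$, not $T(S\Si)$.

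This suggests: the $r$-direction (radial in symplectization) pairs with $\hat\eta$ — perturbing $\eta$ away from $k$ moves you in the $r$-direction (recall $\phit$ rescales $r$ by $1/\rhot$). So the kernel, as a subspace of $T(S\Si) \oplus \R\langle\hat\eta\rangle$, should be: $\{(\hat u, \hat\eta) : \hat u \in T_{(1,x)}(S\Si), \ \hat\eta = (\text{fn of } \hat u), \ \text{constraint}\}$, and projecting to $S\Si$ gives $T_x\Si \subset T_{(1,x)}(S\Si)$, since the $r$-component of $\hat u$ is determined and the constraint removes exactly... Hmm, the Reeb direction. Let me think: $d\myHe = d(r\myCHe)$; at $r=1$, after rescaling, $\myCHe \equiv 1$ on... no. Actually $\myCHt(\vpt(x)) = \t\al_{\vpt(x)}(\dotvp_t(x))$, which at $t = k$ equals $\t h_k(\vp^k(x)) = \t h_0(x) = 1$. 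So along the critical orbit at time $t=k=0$, $\myCHe = 1$ and $\myHe = r\myCHe = 1$. The constraint $\int_0^1 d\myHe(\hat u) = 0$: since $d\myHe$ is (at the relevant point) $\myCHe\, dr + r\, d\myCHe = dr + d\myCHe$, and evaluating on $\hat u = (\hat r, \hat x)$... The upshot should be that the constraint plus the flow equation force $\hat r \equiv 0$ (i.e. stay on $\Si$) OR force a specific relation. I think the cleanest path is:

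**Main approach for step 3:** Parametrize solutions of the linearized equation by $\hat u(0) = (\hat r_0, v) \in \R \oplus T_x\Si$ and $\hat\eta \in \R$. Show (a) for $\hat\eta = 0$ and $\hat r_0 = 0$, every $v \in T_x\Si$ gives a genuine kernel element (these come from the $\Si$-family of critical points — this is the "$\geq$" direction and is essentially tautological since we have a manifold of critical points). (b) Conversely, any kernel element must have $\hat r_0 = 0$ and $\hat\eta = 0$: use the constraint equation together with the explicit form of the linearized flow to show the $r$-component and $\hat\eta$ are forced to vanish. Concretely, the $r$-component of the flow: $\dot{\hat r} = $ (something linear in $\hat r$ and $\hat\eta$) — from $\phit(r,x) = (r/\rhot(x),\vpt(x))$, linearizing in $r$ at fixed $x$, plus the $\hat\eta$ contribution. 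Then the loop condition $\hat r(1) = \hat r(0)$ plus the constraint should pin down $\hat r \equiv 0, \hat\eta = 0$.

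**Expected main obstacle.** The bookkeeping in step 3 — showing the kernel is exactly $T\Si$ and not bigger (in particular, ruling out the radial and $\hat\eta$ directions and confirming the single constraint does its job) — is the crux. The subtlety is that the monodromy $D\phi^k = \mathrm{id}$ is *degenerate* (every orbit is closed, period is not isolated), which is precisely why $\raf$ is only Morse-Bott and not Morse; one has to extract from the $\hat\eta$-dependence and the mean-value constraint exactly which one-dimensional subspace of the naive $2n$-dimensional solution space is not tangent to $\mathrm{Crit}$, and confirm it's the "wrong" direction. I would handle this by writing the linearized equations in the splitting $T(S\Si) = \R\partial_r \oplus \xi \oplus \R R$ adapted to the contact form, where the behaviour of each summand under $D\phit$ is explicit from Definition~\ref{Def_Mapsetup}, and reading off the kernel summand by summand. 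The remaining steps (1, 2, and direction (a) of 3) are formal consequences of the setup already in the paper and of the standard Hessian formula, so I would present them briefly and concentrate the argument on the kernel computation.
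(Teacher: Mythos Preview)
Your overall strategy---parametrize the Hessian kernel by initial data in $T_{(1,x)}(S\Si)$ together with $\hat\eta$, use that $D\phi^k=\mathrm{id}$, and then argue that the constraint kills the radial and $\hat\eta$ directions---is the paper's strategy as well. Two remarks on the execution. First, the linearized equations you write down are those of the \emph{autonomous} Rabinowitz functional; here the Hamiltonian $\myHet$ depends on $\eta$ through the reparametrization $t\mapsto\eta t$, so both your flow equation and your integral constraint $\int_0^1 d\myHet(\hat u)\,dt=0$ acquire extra terms. After the correct computation the constraint becomes \emph{pointwise}, $d\myHe(\hat u(1))=0$, which under the normalization $\myCH{k}\equiv 1$ reads simply $dr(\hat u(1))=0$ and disposes of the radial component in one line. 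Second, rather than tracking $\hat r$ and $\hat\eta$ simultaneously, the paper first proves that every kernel element has $\hat\eta=0$: apply $\al$ to the periodicity relation $\hat u(0)=\hat\eta\,\dot\phi^\eta + D\phi^\eta\hat u(0)$, use $\rho_\eta(x)=1$ so that $\al$ is $\phi^\eta$-invariant at the critical point, and conclude $\hat\eta\,h_\eta=0$, hence $\hat\eta=0$ by positivity. This reduces the problem to the classical (fixed-period) Hessian, from which the identification with $T\Si$ is immediate. Your plan would succeed once the formulas are corrected, but the paper's organization sidesteps precisely the simultaneous bookkeeping you flag as the main obstacle.
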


\begin{proof}[(Sketch)]
Here the motivating principle is that since $\vp^k$ is the identity for $k \in \Z$, given a critical point $(u,k)$ and a tangent vector $v \in T_{u(0)} \Si$, one can produce a new critical point by setting 
\beqn
\tilde{u}(t):=\phi^{tk}(\exp_{v}(u(0))),
\eeq
which implies that $T\Si$ is contained in the kernel of the Hessian.  Nondegeneracy of $\om$ and the nonvanishing of $\dotvpt$ then implies that this is the only way the Hessian can vanish.  We refer to \PROP{Prop_MBatZ} for the full argument.
\end{proof}

\subsection{Minimal degeneracy of P-loops}\label{section_ssperturbploop}

Achieving transversality for $\raf$ presents certain difficulties not seen in classical Floer theory, which we summarize here.

\begin{itemize} \item $\raf$ can never be Morse. \end{itemize}
This is to be expected, as we are dealing with the zero-energy level set of a free time action functional, i.e. $\mr{Crit}_{\eta=0}$ will always correspond to $\Si=\{\myH{0}=0\}.$  This is typically handled using Morse-Bott techniques \cite{CF09}.  For a P-loop, the additional complication is that $\mr{Crit}_{\eta \in \Z}\cong \Si$, i.e. $\Si$ appears as a critical manifold infinitely many times.
\begin{itemize} \item The Poincar\'e return map. \end{itemize}
A more serious obstacle is the existence of isolated degeneracies.  Recall (or see below) that a critical point of the classical action functional (corresponding to a fixed point of a Hamiltonian diffeomorphism $\psi$) is nondegenerate if and only if $\psi$ is Lefschetz regular at $x$ (see \DEF{Def_Lefschetz}), i.e. if and only if $1$ is not in the spectrum of the Poincar\'e return map $\psi_*$.  This is satisfied for generic Hamiltonians.  However, for $\phit$ this can never be the case - the $r$-invariance guarantees that $\p{r}$ will always be mapped to itself, see \eqref{eq_dphiformula}.  Here the Lagrange multiplier condition is essential; the second equation in \eqref{eq_cpconditions} ensures that $\p{r}$ cannot lie in the kernel of the Hessian.
\begin{itemize} \item Multiplicity of $1$ as a symplectic eigenvalue. \end{itemize}
Recalling that real eigenvalues of symplectic matrices occur in reciprocal pairs, we see that the algebraic multiplicity of the eigenvalue $1$ is at least $2$.  If the \emph{geometric} multiplicity is at least $2$, we shall see below that regularity is impossible.  This does occur, for example when one studies the lift of the Reeb flow - the critical manifold carries a free action by reparametrization.  For a generic contact form, one can achieve that these correspond to Morse-Bott critical circles.  It is certainly unreasonable to ask that this can be achieved for a nonautonomous contact Hamiltonian using a $C^1$-small perturbation.
\begin{itemize} \item Lefschetz degeneracy of discriminant points. \end{itemize}
The final complication is that any contactomorphism is Lefschetz degenerate at a discriminant point, in the sense that a nonzero vector is fixed by the by the Poincar\'e return map, see \LEM{Lem_mandatorydegeneracy}.  Putting the pieces together, $\phit_*$ at the lift of a discriminant point has the schematic form 
\beqn 
\left[ \begin{array}{cc|c}
1 & \circledast &\mbf{\divideontimes} \\ 
0 & 1 &\mbf{0} \\ \hline 
\mbf{0} & \mbf{\maltese} &(\vpt_*)\RE{\xi} \\ \end{array}
\right]
\eeq
relative to the splitting 
$$T_xW = \mr{span}(\p{r}) \oplus \mr{span}(R) \oplus \xi.$$

\nl\noindent
In light of this, restrictions must be placed on $\vp$ to ensure that the Floer homology is well-defined.  The first is to require that the P-loop be as nondegenerate as possible.

\begin{Def}\label{Def_mindeg} Given $\vp \in \pls$, consider the set of discriminant points 
\beqn 
C:=\Set{(\eta,x)}{\R \X \Si}{\vpe(x)=x, \, \rho_\eta(x)=1}.
\eeq
We say that $\vpt$ is \emph{minimally degenerate} if 
\begin{itemize}
\item For all $(\eta,x)\in C$ with $\eta \notin \Z$, the subspace of $T_xW$ fixed by the Poincar\'e return map has dimension $1$ 
\item Any $k \in \Z$ is a Morse-Bott critical value of $\raf.$
\end{itemize}
Denote the set of all minimally degenerate P-loops by $\pls_\md$.
\end{Def}

\begin{Rmk}\label{Rmk_doublemagic}
We first point out that the second condition in \DEF{Def_mindeg} is always satisfied, see \PROP{Prop_MBatZ}.  Given this, we emphasize that the minimal degeneracy of $\vp$ is then equivalent to $\BAR\vp$ being transverse to the diagonal, see \LEM{Lem_mandatorydegeneracy} and \DEF{Def_dualform}.  This is not inconsequential - it shows that minimal degeneracy is an open condition for paths of contactomorphisms.  Combining this with a straightforward modification of the ideas in \APP{section_GenericSymp}, we obtain that it is a generic property, i.e. that minimal degeneracy holds for an open and dense subset of $C^\INF(\R,\mr{Cont}\cpair).$
\end{Rmk}

\begin{Def}
For any $\vp \in \pls,$ minimally degenerate or otherwise, define the subset of critical points 
$\mr{Crit}_\md(\raf)$ to be the set of those critical points $(u,\eta)$ such that $(\eta,u(0))$ satisfies the criteria of \DEF{Def_mindeg}.  The most economical description for our perturbation method will be the following: define the subset of the action spectrum
$$\si_{\md}(\raf)=\Set\eta{\si(\raf)}{\In{\cp}{\mr{Crit}_{\md}} \text{ for all } \In{u}{\mr{Crit}_\eta(\raf)}}.$$
In this translation,
$$\vp \in \pls_\md \quad\LRA\quad \si_{\md}(\raf)=\si(\raf).$$
\end{Def}

\begin{Prop}\label{Prop_MBatZ}
For any $\vp \in \pls$, $\raf$ is Morse-Bott for integral critical values, in the sense that the critical manifold may be identified with $\Si$ and the kernel of the Hessian with $T\Si.$  Further, for any P-loop $\vp$, there exist $\tvp \in \pls_\md$ and $\ep>0$ such that for all $k \in \Z,$
$$|t-k| \leq \ep \quad\RA\quad \tvpt=\vpt.$$
\end{Prop}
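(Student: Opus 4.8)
The plan is to treat the two assertions separately. The Morse--Bott statement at integral critical values holds for \emph{every} $\vp\in\pls$ and follows from a direct computation of the linearised Rabinowitz functional; granting it, the passage from an arbitrary P-loop to a minimally degenerate one is a genericity argument carried out entirely in the interior of the time interval, away from $\Z$, so that the loop condition is automatically preserved.

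For the Morse--Bott assertion I would argue as follows. After the normalisation \eqref{eq_normalizecontactform} one has $\t{h}_0\equiv 1$, hence for every $k\in\Z$ both $\t{h}_k(\vp^k(x))=1$ and $\rho_k\equiv 1$, as recorded in the discussion following \eqref{eq_normalizecontactform}. Consequently $\phi^k(r,x)=(r,x)$ near $\Si=\{r=1\}$, and the maps $u_x(t):=\phi^{tk}(1,x)$, with $x$ ranging over $\Si$, are exactly the critical points of $\raf$ with $\eta=k$; projecting $u_x\mapsto u_x(0)$ identifies $\mr{Crit}_k(\raf)$ with $\Si$. The inclusion $T\Si\subseteq\ker\mr{Hess}$ at such a point is obtained by differentiating the family $s\mapsto(u_{c(s)},k)$ attached to a curve $c$ in $\Si$ with $c(0)=x$, exactly as sketched for \LEM{Lem_mb}. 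For the reverse inclusion I would write a kernel element as a pair $(\zeta,\delta\eta)$, with $\zeta$ a $1$-periodic vector field along $u_x$ solving the linearised Hamiltonian equation and subject to the linearised energy constraint. Since $\phi^k=\mr{Id}$ near $\Si$, the monodromy of the linearised flow over $[0,1]$ is the identity, so $\zeta(1)-\zeta(0)=\delta\eta\cdot W$, where $W$ measures the first-order change of the time-$1$ endpoint of the orbit as the multiplier moves away from $k$; its $\Si$-component is the loop velocity, which by periodicity agrees at $t=k$ with its value at $t=0$ and is nowhere zero, so $W\neq 0$ and $1$-periodicity of $\zeta$ forces $\delta\eta=0$, whence $\zeta(1)=\zeta(0)$. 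Finally, after the normalisation the Lagrange multiplier condition $\myHe(u(1))=1$ reads, at $\eta=k$, simply $r(u(1))=1$; once $\delta\eta=0$ is known, its linearisation reduces to $\d r(\zeta(1))=0$, i.e. $\zeta(0)\in T_x\Si$. This gives $\ker\mr{Hess}_{(u_x,k)}\raf\cong T_x\Si$ and proves the Morse--Bott property; in particular the second bullet of \DEF{Def_mindeg} holds for every $\vp\in\pls$, as asserted in \RMK{Rmk_doublemagic}.

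Granting this, for the construction of $\tvp$ it suffices to secure the first bullet of \DEF{Def_mindeg}: that every discriminant point $(\eta,x)$ with $\eta\notin\Z$ has one-dimensional fixed subspace under the Poincar\'e return map. By \RMK{Rmk_doublemagic} this is equivalent to $\BAR\vp$ being transverse to the diagonal, which by (a straightforward adaptation of) \APP{section_GenericSymp} is an open and dense condition on paths in $\mr{Cont}\cpair$. First I would fix, by compactness of $\Si$ and the nonvanishing of $\dotvpt$, a constant $\ep>0$ with $\vpe(x)\neq x$ whenever $\mr{dist}(\eta,\Z)\leq 2\ep$; then every discriminant point with $\eta\notin\Z$ satisfies $\mr{dist}(\eta,\Z)\geq 2\ep$, and this remains true under sufficiently $C^1$-small perturbations of $\vp$. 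Next I would regard $\vp|_{[\ep,1-\ep]}$ as a path in $\mr{Cont}\cpair$ and perturb it, by an amount supported in a compact subset of $(\ep,1-\ep)$ containing $[2\ep,1-2\ep]$ and small in $C^1$, so that the perturbed path is transverse to the diagonal; then I would extend it by $\vpt$ on the complement of $(\ep,1-\ep)+\Z$. Since the perturbation vanishes near every integer, the result $\tvp$ is a smooth loop with $\tvpt=\vpt$ whenever $|t-k|\leq\ep$; it is a loop of contactomorphisms, the perturbation having been performed inside $\mr{Cont}\cpair$; and it remains positive, because positivity of $\io_{\dotvpt}\al$ is an open condition in the $C^1$-topology on the compact manifold $\R/\Z\X\Si$. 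By construction $\tvp$ has no discriminant point with $\mr{dist}(t,\Z)<2\ep$ and is minimally degenerate at all remaining parameters, so $\tvp\in\pls_\md$, as required.

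The step I expect to be the main obstacle is reconciling the genericity input with membership in $\pls$: the transversality statement of \APP{section_GenericSymp} is naturally phrased for unconstrained paths of contactomorphisms, not for \emph{loops}, and not for paths subject to a positivity constraint. This is resolved because the loop condition ($\vp^0=\mr{Id}$ together with $1$-periodicity) constrains $\vp$ only near $\Z$, where $\vp$ is already discriminant-point-free, so the perturbation may be confined to the interior $(\ep,1-\ep)$; and positivity is preserved for free once the perturbation is $C^1$-small. The genuine technical point is therefore to run the parametric transversality argument of \APP{section_GenericSymp} \emph{relative to} a neighbourhood of the endpoints $\ep$ and $1-\ep$ — that is, to check that the perturbations can be taken to vanish there without destroying density — which is precisely the modification alluded to in \RMK{Rmk_doublemagic}.
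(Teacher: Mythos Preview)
Your proposal is correct and follows essentially the same approach as the paper: both establish the Morse--Bott property at integers by identifying $\KLraf$ with $T\Si$, then find an $\ep$-gap in the action spectrum near $\Z$ from compactness of $\Si$ and nonvanishing of $\dotvpt$, and finally perturb $\vp$ rel endpoints on $[\ep,1-\ep]$ to achieve transversality of $\BAR\vp$ to the diagonal while remaining $C^1$-close enough to preserve positivity. The only organisational difference is that the paper invokes the general kernel description \PROP{Prop_kerlrafconditions} (proved later) and specialises to $\eta\in\Z$ using $h_k\equiv 1$ and $dh_k=0$, whereas you argue directly at integer $\eta$, exploiting $\phi^k=\mr{Id}$ to force $\delta\eta=0$ via the nonvanishing of $\dot\phi^k$---this is the specialisation to $\eta=k$ of the paper's \LEM{Lem_bzero}, with your $W$ playing the role of $\dotpe$.
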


\begin{proof}
By \PROP{Prop_kerlrafconditions}, the kernel of the Hessian $\KLraf$ may be identified with
$$\Set{(a \p{r},\ze_0)}{T_{u(0)}W}{\DF\phie{\ze_0}=\ze_0, \, a \, \myCHe(u(1)) + r(1)\dfm{\d\myCHe}{\ze_0}=0}.$$
For integer periods $\In{k}\Z,$ $\vp^k$ is the identity and $\myCH{k} \equiv 1,$ since the contact form is normalized to the contact Hamiltonian, see \eqref{eq_normalizecontactform}.  Hence
$$a \, \myCHe(u(1)) + r(1) \dfm{\d\myCHe}{\ze_0}=0 \quad\RA\quad a=0,$$
and since $\DF{\vp^k}{\ze_0}=\ze_0$ for all $\ze_0 \in T\Si,$ we obtain that $\KLraf$ is precisely the tangent space to the critical manifold $\Si$, hence
$$\Z \in \si_\md(\raf) \quad \FA \vp \in \pls;$$ 
equivalently, $\raf$ is Morse-Bott for integral critical values.  This proves the first statement.

\nl \noindent
Next we show that there exists $\ep>0$ such that for any $\In{k}\Z,$ 
$$0<|\eta-k| \leq \ep \quad\RA\quad \mr{Crit}_\eta(\raf)=\varnothing.$$  
This is not a feature unique to positive contactomorphisms, but holds for any smooth nonvanishing vector field on a compact manifold.  

\begin{Claim*} Let $X$ be a smooth compact manifold, and let $Y_t,$ $t \in [-a,a]$ be a smooth nonvanishing time-dependent vector field, with flow $\psi^t$ and $\psi^0$ the identity map.  Then there exists $\ep>0$ such that $\mr{Fix}(\psi^t)=\varnothing$ for $0<t<\ep.$
\end{Claim*}

\begin{proof}
Fix an auxiliary metric $g$ on $X$, and denote its injectivity radius by $\delta_g$.  Then \begin{align*}
|t|<\ep_1 &:= \frac{\delta_g}{\|Y_t\|_\INF} \RA \psi^t(x) \in B_{\delta_g}(x). \\
\intertext{Assume} 
0<|t|<\ep&:=\min \left(\ep_1,\frac{\min_{X \X [-\ep_1,\ep_1]}(Y_t)}{\| \nab_t Y_t\| _{g,L^\INF}} \right),
\end{align*}
and using Euclidean coordinates for $B_{\delta_g}(x)$, we obtain 
\begin{align} 
\notag 0 &< \left| tY_t \right| - \frac{t^2}{2} \left\| \nab_t Y_t \right\|_{g,L^\INF} \\
\notag &\leq \left| tY_t \right|_g-\int_0^t s \, \left| \nab_s Y_s \right| _g \d s \\
\notag &\leq \left| tY_t \right|_g-\left| \int_0^t s\nab_s Y_s \d s \right|_g \\
\notag &=\left| \left| tY_t \right|_g-\left| \int_0^t s\nab_s Y_s \d s \right|_g  \right|\\
\notag &\leq \left| tY_t-\int_0^t s\nab_s Y_s \d s \right|_g \\
&= \left| \int_0^t Y_s \d s \right|_g = \mr{dist}_g(x,\psi^t(x)). 
\qedhere 
\end{align}
\end{proof}

Using this, there exists $\ep>0$ such that 
$$
0<\mr{dist}(c,\Z)<\ep \RA c \notin \si(\raf).
$$
For any $\tilde{\vp}$ such that for $t \in [0,1]$ there holds: 
\begin{itemize} 
\item $\tvpt=\vpt$ if $t \in [0,\ep] \cup [1-\ep,1]$
\item $\BAR{\tvp}\RE{t \in [\ep,1-\ep]}$ is transverse to the diagonal, see \DEF{Def_Lefschetz};
\end{itemize} 
we can extend by periodicity to obtain a periodic loop of contactomorphisms.  The transversality in the second requirement can be achieved rel endpoints and such that $\tvp$ and $\vp$ are arbitrarily close in $C^1$ norm, meaning that $\tvp$ can be taken to be a P-loop.  By \LEM{Lem_mandatorydegeneracy} and \RMK{Rmk_doublemagic}, $c \in \si_\md(\mc{A}_{\tvp})$ for all $c \notin Z$, and hence $\tvp \in \pls_\md.$ 
\end{proof}

In the sequel, we will assume absent proviso that any P-loops mentioned are minimally degenerate.

\subsection{Analysis of the Hessian, part 1: revisiting the classical action functional}

Suppose now that $M$ is symplectic and $\om$ is the symplectic form.  Assume for simplicity that $\om=\d\la$ is exact.  For a Hamiltonian $\Map{\ham{H}{t}}{M}{\R},$ define the classical action functional 
\bea
\caf &\co C^{\infty} \left(S^1,M \right)  \to \R \\
u &\mapsto \int_{S^1} \pb{u}{\la} - \int_0^1 \ham{H}{t}(u(t))  \d t. 
\eea
Critical points of the action functional are closed orbits of the flow $\phit$ of the Hamiltonian vector field.  The differential of the action functional 
\beqn
\Map{\caf(u)}{L^2(\pb{u}{TM})}{\R}
\eeq
vanishes at $u$ if and only if 
$$
\int_0^1 \dfm{\om}{\dot{u}-\hvf{H_t},\mu} \d t=0 \quad \FA \In{\mu}{L^2(\pb{u}{TM})},
$$
i.e. if and only if $u \in \crit\caf.$  For such $u$, we first define
\begin{align*}
&\Map{D_u}{L^2(\pb{u}{TM})^{\otimes 2}}{\R}, \\
&(\ze,\mu)\mapsto \int_0^1 \ze \dfm{\om}{\dot{u}-\hvf{H_t},\mu} \d t,
\end{align*}
which is well-defined since $u$ is a critical point.  We then define 
\beq\label{eq_lcafdef}
\Map{\lcaf}{L^2(\pb{u}{TM})}{\left( L^2(\pb{u}{TM}) \right)^*} \to L^2(\pb{u}{TM}) 
\eeq
where the first map assigns to $\ze$ the linear functional 
\beqn
\lcaf(\ze)\co \mu \mapsto \lcaf(\ze,\mu),
\eeq
and the second is the $\om$-duality isomorphism.

\begin{Rmk}\label{Rmk_pseudohessian}
We call $\lcaf$ a \emph{pseudo-Hessian} because it is anti-symmetric.  We could of course insist on symmetry by defining an $L^2$-metric associated to $\om$, but this requires auxiliary choices (say, of an almost complex structure).  We can still refer to the kernel of \eqref{eq_lcafdef} as the \emph{kernel of the Hessian}, since this is independent of choices.
\end{Rmk}

\begin{Lemma}\label{Lem_magic6} 
If $u \in \crit\caf$ and $\ze \in \Ker{\lcaf}$, then $\ze(0) = \ze(1).$
\end{Lemma}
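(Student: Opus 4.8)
The plan is to unwind the kernel condition into a weak form of the linearized Hamiltonian equation along $u$, upgrade $\ze$ from an a priori merely $L^2$ section to a continuous one by elliptic regularity on $S^1$, and then read off $\ze(0)=\ze(1)$ from the fact that $\ze$ lives over the circle $\R/\Z$.

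First I would make the pseudo-Hessian explicit. Fixing a torsion-free connection $\nabla$ on $TM$ and differentiating the first variation $\mu \mapsto \int_0^1 \dfm{\om}{\dot u - \hvf{H_t},\mu}\,\d t$ once more at $u$, every term carrying $\dot u - \hvf{H_t}$ as an argument of $\om$ drops out precisely because $u \in \crit\caf$, and since $u$ is a loop no boundary term appears; what remains is
\[
\lcaf(\ze,\mu) \;=\; \int_0^1 \dfm{\om}{\nabla_t\ze - \nabla_\ze\hvf{H_t},\,\mu}\,\d t .
\]
(The vanishing of the connection-dependent terms is the independence of choices recorded in \RMK{Rmk_pseudohessian}.) Hence $\ze \in \Ker{\lcaf}$ precisely when this integral vanishes for every $\mu \in L^2(\pb{u}{TM})$.

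The heart of the matter, and the step I expect to be the main obstacle, is the regularity claim. Testing the vanishing integral against all \emph{smooth periodic} sections $\mu$ --- dense in $L^2(\pb{u}{TM})$ --- and using fiberwise nondegeneracy of $\om$, one concludes that the distributional derivative of $\ze$ along $u$ satisfies $\nabla_t\ze = \nabla_\ze\hvf{H_t}$ on $S^1$. The right-hand side lies in $L^2$, since $\nabla\hvf{H_t}$ is a smooth (hence bounded) bundle endomorphism and $\ze \in L^2$; therefore $\ze \in W^{1,2}(S^1,\pb{u}{TM})$ and so admits a continuous representative by the Sobolev embedding $W^{1,2}(S^1)\hookrightarrow C^0(S^1)$. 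What must be checked with care is that the admissible test sections $\mu$ are genuine sections over $S^1$, not merely ones supported away from the basepoint: if $\ze$ were smooth on $(0,1)$ but had $\lim_{t\to1^-}\ze(t)\neq\ze(0)$, its weak derivative would carry a Dirac mass at the seam $0\sim 1$, which a test section nonvanishing there would detect; so the kernel condition really does control $\ze$ across the seam and forces regularity there too.

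Finally, $\ze$ is now a continuous section of $\pb{u}{TM}$ over $S^1 = \R/\Z$, so its value at the single point represented by both $0$ and $1$ is unambiguous, which is exactly the assertion $\ze(0) = \ze(1)$. (One may bootstrap the equation to see that $\ze$ is in fact smooth, but continuity suffices here.)
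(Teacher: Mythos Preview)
Your argument is correct, and in fact considerably more scrupulous than the paper's. The paper treats this lemma as essentially tautological: its entire proof is the observation that any vector field along $u$ arises as $\p_s v(0,t)$ for a variation $v:(-\ep,\ep)\times S^1\to M$ with $v(0,\cdot)=u$, and since each $v(s,\cdot)$ is a loop one has $v(s,0)=v(s,1)$ for all $s$, whence $\ze(0)=\ze(1)$. Neither hypothesis --- that $u$ is critical or that $\ze$ lies in $\Ker{\lcaf}$ --- is actually invoked; the author even flags this with the remark ``We assume far too much, but state it this way for reference.'' In effect the paper is working with smooth sections from the outset and recording periodicity as a property of the domain.

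Your route is genuinely different: you take the $L^2$ setting of \eqref{eq_lcafdef} seriously, derive the linearized equation $\nabla_t\ze=\nabla_\ze\hvf{H_t}$ weakly on $S^1$ from the kernel condition, and use it to bootstrap $\ze$ into $W^{1,2}(S^1)\hookrightarrow C^0(S^1)$ so that pointwise evaluation becomes meaningful. This is the honest justification for why an a priori $L^2$ kernel element has well-defined boundary values at all --- a point the paper's variation-based argument silently assumes by restricting to smooth $\ze$. What the paper's approach buys is brevity and a clean conceptual statement for later reference (it is used in \COR{Cor_magic5} and again for the Rabinowitz functional); what your approach buys is that the lemma is no longer vacuous in the function-space framework the paper itself set up.
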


\begin{proof} 
We assume far too much, but state it this way for reference.  Consider the circle as $S^1 = \R/\mbb{Z}.$ Since $u$ is a map from the circle, any vector field in $\pb{u}{TM}$ is the derivative at $s=0$ of a map 
\beqn
\Map{v(s,t)}{(-\ep,\ep) \X S^1}{M},  \quad v(0,t) = u(t) .
\eeq
Since $v(s,t)$ is a loop for all $s$, 
\begin{align}
\notag v(s,0)&=v(s,1) \quad \FA s \in (-\ep,\ep) \\
&\RA \ze(0) = \p{s} v(0,0)  = \p{s} v(0,1) =\ze(1) . 
\qedhere
\end{align}
\end{proof}

We now state for reference an elementary, but important, Lemma.  The proof is omitted.

\begin{Lemma}\label{Lem_magic4}
Given a symplectic manifold $(M,\om)$ and a symplectic isotopy $\psi^t$, suppose $u:[0,1] \to M$ satisfies $\dotu=\dot{\psi}^t$ and 
$\Map{v}{(-\ep,\ep) \X [0,1]}{M}$ satisfies $v(0,t)=u(t).$  Then 
\begin{align*}
\dd{s}\RE{s=0} &\dfm{\om}{X-\p{t}v, \mu} = 0 \FA \mu \in \pb{u}{TM} \\
&\LRA \p{s}v(0,t) =\DF{\psi^t}{\p{s}v(0,0)}.
\end{align*}
If pointwise vanishing is replaced by 
$$ \dd{s}\RE{s=0} \int_0^1 \dfm{\om}{X-\p{t}v, \mu} \d t = 0 \quad \FA \mu \in L^2(\pb{u}{TM}),$$
the conclusion remains true if the equality is taken in $L^2.$
\end{Lemma}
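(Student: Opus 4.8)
The plan is to reduce both asserted equivalences to a single first-order linear ODE along $u$, using nondegeneracy of $\om$ and uniqueness for linear ODEs. Write $\ze(t):=\p{s}v(0,t)$, a section of $\pb{u}{TM}$; let $X=X_t$ be the (possibly time-dependent) vector field generating $\psi^t$; and fix an auxiliary torsion-free connection $\nab$ on $M$ (or work in local coordinates along the curve $u$).

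First I would observe that the hypothesis $\dotu=\dot\psi^t$ says that $u$ is an orbit of $\psi^t$, i.e.\ $u(t)=\psi^t(u(0))$, so that $\p{t}v(0,t)=\dotu(t)=X_t(u(t))$ and hence the section $X-\p{t}v$ of $\pb{v}{TM}$ vanishes identically along $\{s=0\}$. This is the \emph{crucial} point: since $X-\p{t}v$ is zero at $s=0$, every connection- and extension-dependent correction term in $\dd{s}\RE{s=0}\dfm{\om}{X-\p{t}v,\mu}$ is multiplied by zero, so this $s$-derivative is a well-defined intrinsic quantity (independent of how $\mu$ is extended to a vector field along $v$ and of the choice of $\nab$), equal to $\dfm{\om}{\nab_s(X-\p{t}v)\RE{s=0},\mu}$. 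With $\nab$ torsion-free, $\nab_s(\p{t}v)\RE{s=0}=\nab_t(\p{s}v)\RE{s=0}=\nab_t\ze$, while $\nab_s(X_t\circ v)\RE{s=0}=\nab_\ze X_t$ (the time-dependence is irrelevant, $t$ being held fixed), so
\beqn
\dd{s}\RE{s=0}\dfm{\om}{X-\p{t}v,\mu}=\dfm{\om}{\nab_\ze X_t-\nab_t\ze,\,\mu}=:\dfm{\om}{L\ze,\,\mu},
\eeq
where $L$ is a first-order linear operator on sections of $\pb{u}{TM}$.

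Next I would clear the analytic points. For the pointwise claim, nondegeneracy of $\om$ gives at once that $\dfm{\om}{L\ze,\mu}=0$ for all $\mu\in\pb{u}{TM}$ iff $L\ze\equiv 0$. In the $L^2$ version, integrating against arbitrary $\mu\in L^2(\pb{u}{TM})$ forces $\dfm{\om}{L\ze(t),\cdot}=0$ for a.e.\ $t$, hence $L\ze=0$ a.e.; since $v$ is smooth, $\ze=\p{s}v(0,\cdot)$ is smooth, so $L\ze\equiv 0$ in this case too. Finally I would identify $\Ker{L}$: differentiating the flow identity $\p{t}\psi^t(x)=X_t(\psi^t(x))$ in $x$ in the direction $\ze(0)$, and rewriting in the framing given by $\nab$ (using again that $u$ is an orbit, so the Christoffel terms pair symmetrically), shows that $w(t):=\DF{\psi^t}{\ze(0)}$ satisfies $Lw=0$ with $w(0)=\ze(0)$; conversely any $w$ of this form is visibly a solution. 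Uniqueness for the linear ODE $\nab_t\ze=\nab_\ze X_t$ with prescribed initial value then yields $L\ze\equiv 0\ \LRA\ \ze(t)=\DF{\psi^t}{\ze(0)}$ for all $t$, which together with the preceding observations is precisely the claimed equivalence, in both its pointwise and its $L^2$ form.

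The one genuinely delicate step is the first: establishing that $\dd{s}\RE{s=0}\dfm{\om}{X-\p{t}v,\mu}$ is intrinsic and is computed by differentiating the first slot alone. Everything downstream — nondegeneracy of $\om$, the a.e.-to-everywhere bootstrap in the $L^2$ case, and ODE uniqueness — is routine. I expect no surprises there, but one should phrase the intrinsicality carefully, since that is the single place where the hypothesis $\dotu=\dot\psi^t$ is actually used.
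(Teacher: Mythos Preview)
Your argument is correct and complete: the key observation that $X-\p{t}v$ vanishes along $\{s=0\}$ makes the $s$-derivative intrinsic and equal to $\dfm{\om}{\nab_\ze X_t-\nab_t\ze,\mu}$; nondegeneracy of $\om$ then reduces the equivalence to the linear ODE $\nab_t\ze=\nab_\ze X_t$, whose unique solution with given initial value is the linearized flow $\DF{\psi^t}{\ze(0)}$ (the torsion-free assumption is exactly what makes the Christoffel terms cancel in this identification). There is nothing to compare against in the paper itself, since the proof of this lemma is explicitly omitted there; your writeup would serve perfectly well as the missing argument.
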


\begin{Cor}\label{Cor_magic5}
 The kernel of the Hessian of $\caf$ at a critical point $u$ is identified with 
$$\Set{v}{T_{u(0)}M}{\DF{\phi^1}{v}=v}.$$ 
\end{Cor}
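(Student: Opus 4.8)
The plan is to assemble the statement directly from \LEM{Lem_magic4} and \LEM{Lem_magic6}, which already carry all of the analytic content. Let $u \in \crit{\caf}$, so that $\dotu(t) = \hvf{H_t}(u(t))$; writing $\phi^t$ for the flow of $\hvf{H_t}$, this says $u(t) = \phi^t(u(0))$ and places us exactly in the setting of \LEM{Lem_magic4}, applied with the symplectic isotopy $\psi^t := \phi^t$ and the time-dependent vector field $X := \hvf{H_t}$. Since the $\om$-duality map in \eqref{eq_lcafdef} is an isomorphism, $\ze \in \Ker{\lcaf}$ if and only if $D_u(\ze,\mu) = 0$ for every $\mu \in L^2(\pb{u}{TM})$; and unwinding the definition of $D_u$ as the second variation of $\caf$ at the critical point $u$, for a variation $v(s,t)$ with $v(0,\cdot) = u$ and $\p{s}v(0,\cdot) = \ze$ one checks that $D_u(\ze,\mu)$ equals, up to an overall sign, $\dd{s}\RE{s=0}\int_0^1 \dfm{\om}{X - \p{t}v,\mu}\,\d t$, i.e. the left-hand side of the $L^2$ hypothesis of \LEM{Lem_magic4}.

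First I would invoke the $L^2$ version of \LEM{Lem_magic4}: it yields that $\ze \in \Ker{\lcaf}$ if and only if $\ze(t) = \DF{\phi^t}{\ze(0)}$ (equality in $L^2$). In particular every $\ze \in \Ker{\lcaf}$ agrees almost everywhere with the smooth section $t \mapsto \DF{\phi^t}{\ze(0)}$, and we replace $\ze$ by this representative; then $\ze$ is determined by $\ze(0) \in T_{u(0)}M$, so the evaluation map $\mr{ev}_0 \co \Ker{\lcaf} \to T_{u(0)}M$, $\ze \mapsto \ze(0)$, is linear and injective.

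Next I would identify the image of $\mr{ev}_0$. For $\ze \in \Ker{\lcaf}$, \LEM{Lem_magic6} gives $\ze(0) = \ze(1)$, while the equivalence above gives $\ze(1) = \DF{\phi^1}{\ze(0)}$; hence $\DF{\phi^1}{\mr{ev}_0(\ze)} = \mr{ev}_0(\ze)$. Conversely, given $v \in T_{u(0)}M$ with $\DF{\phi^1}{v} = v$, the smooth section $\ze(t) := \DF{\phi^t}{v}$ satisfies $\ze(1) = v = \ze(0)$, so it descends to a genuine section of $\pb{u}{TM}$ over $S^1 = \R/\Z$; by the pointwise assertion of \LEM{Lem_magic4} it lies in $\Ker{\lcaf}$, with $\mr{ev}_0(\ze) = v$. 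Thus $\mr{ev}_0$ restricts to a linear isomorphism $\Ker{\lcaf} \to \Set{v}{T_{u(0)}M}{\DF{\phi^1}{v} = v}$, which is the claimed identification.

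The one point requiring care is the reduction in the first paragraph — verifying that $\Ker{\lcaf}$, defined through $D_u$ and $\om$-duality, coincides with the set of $\ze$ satisfying the $L^2$ hypothesis of \LEM{Lem_magic4} — together with the mild bookkeeping of passing from the a priori $L^2$ section $\ze$ to its smooth representative before evaluating at $t = 0$. Neither is a genuine difficulty: the former uses only nondegeneracy of $\om$ and the definition of $D_u$, and the latter is forced by the identity $\ze(t) = \DF{\phi^t}{\ze(0)}$ itself. No hypothesis on completeness of the Hamiltonian flow on $M$ is needed, since $u$ is a fixed periodic orbit and the linearized flow $\DF{\phi^t}{v}$ is defined along it for $v \in T_{u(0)}M$, $t \in [0,1]$.
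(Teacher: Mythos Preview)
Your proposal is correct and follows essentially the same route as the paper's proof: invoke \LEM{Lem_magic4} to characterize $\Ker{\lcaf}$ as sections of the form $\ze(t)=\DF{\phi^t}{\ze(0)}$, then use \LEM{Lem_magic6} to impose the periodicity constraint $\DF{\phi^1}{\ze(0)}=\ze(0)$. You spell out more than the paper does --- the evaluation map, the converse direction, and the $L^2$-versus-smooth bookkeeping --- but the logical structure is identical.
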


\begin{proof}
\LEM{Lem_magic4} plus \LEM{Lem_magic6} show the following:
\begin{align} 
\notag \Ker{\lcaf}  &= \Set{\ze}{L^2(\pb{u}{TM})}{\ze(t)=\DF{\phit}{\ze(0)}} \\
&\cong \Set{v}{T_{u(0)}M}{\DF{\phi^1}{v}=v}
\qedhere
\end{align}
\end{proof}

\subsection{Analysis of the Hessian, part 2: the kernel of $\Lraf$}

We now extend the analysis of the classical action functional to the Rabinowitz action functional.  We assume in this section that the symplectization of $\cpair$ is embedded in a symplectic manifold $W$, and use coordinates $(r,x)$ as before.  However, we do not require here that $\Si$ is Liouville fillable or admits a P-loop.  Assume
\beqn 
\Map\vp{[0,1]}{\mr{Cont}\cpair}
\eeq  
is a smooth positive \emph{path} of contactomorphisms.  We recall for the reader's convenience a brief glossary of the terminology used.

\begin{Def}\label{Def_assobjects}
To any positive path $\vp$, define
\begin{equation}\label{assfunctions}
\begin{aligned}
&\Map{\rho_t}{\Si}{\R},   && \rho_t:=\io_R (\pb{\vp_t}\al)   \\
&\Map{h_t}{\Si}{\R},      && h_t= \io_{\dotvp_t} \al  \\
&\Map{\myHt}{W}{\R},      && \myHt(r,x)=r\, h_t(x)-1  \\
&\Map{\phit}{W}{W},       && (r,x)\mapsto \left(\frac{r}{\rho_t(x)},\vpt(x)\right). 
\end{aligned}
\end{equation}
We point out that $\myHt$ generates $\phit$ as an exact Hamiltonian symplectomorphism of $W.$
\end{Def}
\begin{Def}\label{Def_kraf} 
The Rabinowitz action function functional associated to the data $(\Si,W,\vp)$ is
\begin{equation}\label{eq_krafdef}
\begin{aligned}
\raf &: C^{\infty} \left(S^1,W \right)  \X \R \to \R \\ 
(u,\eta)  &\mapsto \int_{S^1} \pb{u}{\la} - \eta \int_0^1 \myHet (u(t))  \d t. 
\end{aligned}
\end{equation}
The critical set is found to be
\beq\label{eq_rafcritformal}
\crit{\raf} = \Set{(u,\eta)}{C^{\infty}(S^1,W) \X \R}{u(t)=\phi^{\eta t}(u(0)), \, \myHe(u(1))=0},
\eeq
and can be identified with
\beq\label{eq_rafcrituseful} 
\crit{\raf} \cong \Set{(r,x,\eta)}{W \X \R}{\vp^{\eta}(x)=x,\, \rho_{\eta}(x)=1,\, r=\frac{1}{h_{\eta}(x)}}.
\eeq
At a critical point, the differential is defined as follows:
\beqn
\dfm{\d\raf}{\ze,b} =\dd{s}\RE{s=0} \raf (v(s,t) ,\eta(s)),
\eeq 
where $\p{s} v(0,t)  = \ze,$ $v(0,t) =u(t) ,$ $\eta' (0) = b,$ and $\eta (0) =\eta.$  The pseudo-Hessian 
\beqn 
\Map{\Lraf}{L^2(\pb{u}{T\Si}) \X \R}{L^2(\pb{u}{T\Si}) \X \R} 
\eeq 
and the associated $\KLraf$ are defined as in \RMK{Rmk_pseudohessian}.
\end{Def}

\begin{Lemma}\label{Lem_bzero} 
Given $\cp\in\mr{Crit}(\raf)$ and paths 
\beqn
\Map{\eta(s)}{(-\ep,\ep)}\R,\quad \Map{x(s)}{(-\ep,\ep)}{W}
\eeq
satisfying $x(0)=u(0),$ $\eta(0)=\eta$, define
\beqn
v(s,t) := \phiest(x(s)).
\eeq
Set $\ze:=\p{s}v(0,t),$ $b:=\eta'(0).$  Then if $\,\In{(\ze,b)}\KLraf,$ $b=0.$ 
\end{Lemma}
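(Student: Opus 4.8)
The plan is to leverage \RMK{Rmk_soh}---that $\raf$ takes the value $\eta$ on a critical point of period $\eta$---together with the observation that the computation behind it uses \emph{only} that the critical point is an orbit of the relevant Hamiltonian flow, not the constraint $\myHe(u(1))=0$. By construction each curve $v(s,\cdot)$ \emph{is} such an orbit (for the Hamiltonian family reparametrised by $t\mapsto\eta(s)t$; this is precisely the content of $v(s,t)=\phiest(x(s))$), so the same computation applies verbatim. Its mechanism is the symplectization identity $\la(\hvf{\myHt})=\myHt+1$ (valid since $\la=r\al$ and $\phit$, the flow of $\myHt$, is the lift of the contact isotopy $\vpt$): it converts $\int_0^1 v(s,\cdot)^{*}\la$ into $\eta(s)\int_0^1\big[\myHet(v(s,t))+1\big]\,dt$, which cancels the term $-\eta(s)\int_0^1\myHet(v(s,t))\,dt$ of \eqref{eq_krafdef}, leaving
\[
  \raf\big(v(s,\cdot),\eta(s)\big)=\eta(s)\qquad\text{for every $s$ near $0$.}
\]
Here $\raf$ is read off its defining integral \eqref{eq_krafdef}, which makes sense for an arbitrary smooth path $[0,1]\to W$, not only for loops; the hypothesis $(\ze,b)\in\KLraf$ enters only to ensure that $\ze$ is a genuine loop, so this mild abuse is harmless.

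Next I would differentiate this identity at $s=0$. The left-hand side gives $\eta'(0)=b$. For the right-hand side, expand \eqref{eq_krafdef} and compute the first variation of $\int_0^1 v(s,\cdot)^{*}\la$: on the interval $[0,1]$ it carries, in addition to the ordinary integrand, a boundary term whose value at $s=0$ is $\la_{u(1)}(\ze(1))-\la_{u(0)}(\ze(0))$. Thus $\dd{s}\RE{s=0}\raf\big(v(s,\cdot),\eta(s)\big)$ equals $\dfm{\d\raf}{\ze,b}$ (the first variation of $\raf$ at $\cp$) plus that boundary term. Now $\cp\in\mr{Crit}(\raf)$ forces $u(0)=u(1)$ and $(\ze,b)\in\KLraf$ forces $\ze(0)=\ze(1)$, so the boundary term vanishes; and $\dfm{\d\raf}{\ze,b}=0$ since $\cp$ is a critical point. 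Comparing the two sides yields $b=0$.

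The only step requiring genuine care is the first---the verification that $\raf\big(v(s,\cdot),\eta(s)\big)=\eta(s)$---where one must keep track of the reparametrisation $t\mapsto\eta(s)t$ inside the time-dependent Hamiltonian $\myHt$ while unwinding $\int v(s,\cdot)^{*}\la$. Everything afterwards is formal; in particular the full strength of the hypothesis $(\ze,b)\in\KLraf$ is never used, only the periodicity of $\ze$.
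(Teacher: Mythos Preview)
Your proof is correct and takes a genuinely different route from the paper's. The paper argues pointwise: from $\ze(0)=\ze(1)$ it extracts the identity $\ze_0 = b\,\dot\phi^\eta + \phi^\eta_*\ze_0$ in $T_{u(0)}W$, pairs both sides with $\al$, and uses $\rho_\eta(u(0))=1$ (so that $(\phi^\eta)^*\al=\al$ at that point) together with positivity $\al(\dot\phi^\eta)=h_\eta>0$ to collapse this to $b\,h_\eta=0$. Your argument is variational: the homogeneity identity $\la(\hvf{\myHt})=\myHt+1$ globalises \RMK{Rmk_soh} to the whole family, giving $\raf(v(s,\cdot),\eta(s))\equiv\eta(s)$; differentiating at $s=0$, criticality of $(u,\eta)$ kills the bulk first variation and periodicity of $\ze$ kills the boundary contribution, leaving $b=0$. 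A pleasant byproduct is that positivity of $h_\eta$ is never invoked explicitly---its role is absorbed into the ``$+1$'' of the homogeneity identity. In fact your argument is the integrated form of pairing the paper's pointwise equation with $\la$ rather than $\al$: since $(\phi^\eta)^*\la=\la$ globally and $\la(\dot\phi^\eta)=\myHe+1=1$ on the constraint hypersurface, one reads off $b=0$ directly. Both proofs use from the hypothesis $(\ze,b)\in\KLraf$ only the periodicity of $\ze$, as you correctly observe.
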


\begin{proof} 
For notational convenience, set $x:=u(0)$ and $\ze_0:=\ze(0)$.  Although $v(s,t)$ is not necessarily a variation through loops, the assumption that $(\ze,b) \in \KLraf$ implies that $\p{s} v(0,0) =\p{s} v(0,1) .$  A short computation shows 
\beqn
\p{s} v = \eta'(s) \, t \dotpest + \DF\phiest{x'(s)} ,
\eeq
and hence the loop condition reads
\begin{align}\label{eq_b=0}
\notag 0 &= \p{s} v(0,1)  - \p{s} v(0,0)  \\
\notag &= \eta'(0)  \dotpe + \DF{\phie}{x'(0)} - 0 - \DF{\phi^0}{x'(s)} \\
\RA \ze_0 &=b \dotpe + \DF{\phie}{\ze_0}. \\
\intertext{This last equation follows from the definitions and the fact that }
\notag \phi^0 = \mr{Id} &\RA \DF{\phi^0}{\ze_0} =\ze_0 . \\
\intertext{By \eqref{eq_rafcrituseful} there holds }
\notag \rho_{\eta} (x) =1 &\RA \pbp{\phi_{\eta}}{\al}(x) =\pbp{\vp_{\eta}}{\al}(x) =\al(x)  \\
\notag &\RA \dfm{\al}{v} =\dfm{\al}{\DF{\phie}{v}}, \quad \FA \In{v}{T_xW}. \\ 
\intertext{Apply $\al$ to \eqref{eq_b=0} to obtain that }
\notag \dfm{\al}{\ze_0} &=\dfm{\al}{b \dotpe} +\dfm{\al}{\DF{\phie}{\ze_0}} \\
\notag &=b \dfm{\al}{\dotvpe} + \dfm{\pbp{\phi_{\eta}}{\al}}{\ze_0} \\
\notag &=b \, h_{\eta}+\dfm{\al}{\ze_0}. \\
\intertext{The assumption that $\vp^t$ is a positive path is equivalent to $h>0$. Hence }
\notag \dfm{\al}{\ze_0} &=  b \, h_{\eta}+\dfm{\al}{\ze_0} \\
\notag &\RA  b \, h_{\eta} =0 \\
&\RA  b=0. 
\qedhere
\end{align}
\end{proof}

\begin{Lemma} 
Suppose $(\ze,b) \in \KLraf$.  Then $b=0.$ 
\end{Lemma}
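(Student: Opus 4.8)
The plan is to reduce this to \LEM{Lem_bzero}, which already settles the case in which $\ze$ is the first variation of a family of the explicit shape $v(s,t)=\phiest(x(s))$. Thus it suffices to show that \emph{every} $(\ze,b)\in\KLraf$ is of this shape --- i.e. that one can choose paths $\eta(s)$ with $\eta(0)=\eta$, $\eta'(0)=b$, and $x(s)$ with $x(0)=u(0)$, $x'(0)=\ze(0)$, so that $\ze(t)=\p{s}v(0,t)$. Given this, \LEM{Lem_bzero} applies (it asks only that $(\ze,b)\in\KLraf$, which we have, and not that $v$ be loop-valued) and yields $b=0$.

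First I would unwind the pseudo-Hessian $\Lraf$ from \DEF{Def_kraf} and \RMK{Rmk_pseudohessian} and test the condition $(\ze,b)\in\KLraf$ against vectors of the form $(\mu,0)$. At a critical point --- where $u(t)=\phi^{\eta t}(u(0))$ by \eqref{eq_rafcritformal} --- this should be the analogue of the first-order condition of \LEM{Lem_magic4}, now allowing the Lagrange multiplier to vary: it characterizes $\ze$ as the first variation of the curve $t\mapsto\phi^{\eta t}(u(0))$ within the family of curves $t\mapsto\phi^{\eta(s)t}(x(s))$. By the chain rule --- precisely the expansion $\p{s}v=\eta'(s)\,t\,\dotpest+\DF{\phiest}{x'(s)}$ already used in the proof of \LEM{Lem_bzero} --- any such variation has the integrated form
\beqn
\ze(t)=\DF{\phi^{\eta t}}{\ze(0)}+b\,t\,\dot{\phi}^{\eta t}(u(0)),
\eeq
so that $\ze$ is completely determined by $b$ and $\ze(0)$.

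Granting this, take $\eta(s)=\eta+bs$ and any path $x(s)$ with $x(0)=u(0)$, $x'(0)=\ze(0)$; then $v(s,t)=\phiest(x(s))$ has $\p{s}v(0,\cdot)$ equal to the right-hand side displayed above, hence $\ze=\p{s}v(0,\cdot)$, which is the hypothesis of \LEM{Lem_bzero}, and that lemma gives $b=0$. (Alternatively one may skip the explicit $v$ and argue as in \LEM{Lem_bzero} directly: evaluating the displayed identity at $t=1$ and using that $\ze$ is a section over $S^1$ produces the monodromy relation $\ze(0)=b\,\dotpe+\DF{\phie}{\ze(0)}$, which is \eqref{eq_b=0}; applying $\al$, using $\pbp{\phi_\eta}{\al}=\al$ at $u(0)$ --- valid since $\rho_\eta(u(0))=1$ --- and positivity $h_\eta>0$ then forces $b=0$.)

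I expect the real obstacle to be the first step: extracting that first-order characterization of $\ze$ from the pseudo-Hessian of $\raf$. This amounts to redoing the Hessian computation of the preceding subsection in the presence of the extra $\R$-factor, carefully tracking the cross-terms between the $\ze$- and $b$-directions; the $\om$-duality bookkeeping of \RMK{Rmk_pseudohessian} and the fact that $\myHt$ generates $\phit$ as an exact Hamiltonian symplectomorphism (\DEF{Def_assobjects}) should make it routine, if a little lengthy. Everything after that is an immediate appeal to \LEM{Lem_bzero}.
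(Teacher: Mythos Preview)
Your proposal is correct and follows essentially the same route as the paper: show that any $(\ze,b)\in\KLraf$ has $\ze$ of the explicit flow-variation form, then invoke \LEM{Lem_bzero}. The paper executes the first step a bit more slickly than the direct Hessian expansion you anticipate: it defines $\t\ze=\p_s v(0,t)$ for $v(s,t)=\phi^{\eta(s)t}(x(s))$, notes that $\p_t v-\eta(s)\hvf{\myHest}$ vanishes \emph{identically} in $s$ (so its first variation is zero, giving $\Lraf(\t\ze,b)(\mu,0)=0$ for free), and then subtracts to obtain $\Lraf(\ze-\t\ze,0)=0$; the classical case (\COR{Cor_magic5}) together with $\ze(0)=\t\ze(0)$ then forces $\ze\equiv\t\ze$. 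This sidesteps the cross-term bookkeeping you flagged as the main obstacle.
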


\begin{proof} 
First assume that $b=0.$  Observe that linearization of the Rabinowitz action functional with both time vectors equal to zero, i.e. 
$$\Lraf(\ze,0)(\hat{\ze},0) ,$$
is simply the linearization of the classical action functional.  Hence \COR{Cor_magic5} applies, and we obtain that 
\beqn
(\ze,0) \in \KLraf \RA \ze(t) =\DF{\phiet}{\ze(0)}.
\eeq

\nl\noindent
Now given $ (\ze,b)  \in \KLraf ,$ set $x := u(0) $ and define 
\beqn
v(s,t) := \phi^{\eta(s) t} (x(s))
\eeq
as in \LEM{Lem_bzero}.  A priori, $\p{s} v(0,0)$ may not equal $\p{s} v(0,1) .$  We claim that 
\begin{equation}\label{eq_calclemma}
\pd{s}\RE{s=0}\dfm{\om}{\p{t} v -\eta(s)\hvf{\myHest},\mu} =0 \quad \FA \In{\mu}{\Ga(\pb{u}{TW})}.
\end{equation}
Differentiating the expression in \eqref{eq_calclemma} is simple - the difference quotient is zero for all $s$!  Denoting $\p{s} v(0,t)$ by $\t{\ze},$ we then have that 
\bea
&\              &\Lraf(\t{\ze},b)   &= \Lraf(\ze,b)=0 \\
&\RA    &\Lraf(0,b)             &=\Lraf(-\t{\ze},0)  \\
&\RA    &\Lraf(\ze,b)          	&=\Lraf(\ze-\t{\ze},0)=0 , 
\eea
which reduces to Step 1.  Since $\ze(0) =\t{\ze}(0)$, Step 1 shows that $$\ze\equiv\t{\ze}.$$  This implies that $ (\t{\ze},b) \in \KLraf $ (i.e. we now know that $\t{\ze}$ is periodic), and hence by \LEM{Lem_bzero}, $b=0.$ 
\end{proof}

The following proposition summarizes our findings thus far.

\begin{Prop}\label{Prop_kerconditions}
If $(\ze,b)  \in \KLraf,$ then 
\begin{enumerate}
\item $b=0$
\item $\ze(t) = \DF{\phiet}{\ze(0)}$
\item $\ze(0) = \ze(1)  = \DF{\phie}{\ze(0)}$ \label{list_kerconditionsthree}
\item If we write write $ \ze(0) =\ze_r \p{r} + \ze_{\Si} $ according to the splitting  \label{list_kerconditionsfour}
\beqn
TW=T \R \oplus T\Si,
\eeq
then \ref{list_kerconditionsthree} implies that 
\beqn 
\dfm{\d \rho_{\eta}}{\ze_{\Si}} =0, \quad \DF{\vpe}{\ze_{\Si}} = \ze_{\Si}. 
\eeq 
\end{enumerate}
\end{Prop}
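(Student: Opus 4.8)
The plan is to treat \PROP{Prop_kerconditions} as a bookkeeping statement: the vanishing $b=0$ and the identity $\ze(t)=\DF{\phiet}{\ze(0)}$ are restatements of the two lemmas immediately preceding it, the fixed‑point equation $\ze(0)=\ze(1)=\DF{\phie}{\ze(0)}$ is periodicity combined with that identity at $t=1$, and the splitting statement is a one‑line differentiation of the explicit formula for $\phie$. I would organize the proof as four short paragraphs, one per clause.

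For $b=0$ I would simply cite the two preceding lemmas: Lemma~\ref{Lem_bzero} gives $b=0$ for variations of the shape $v(s,t)=\phi^{\eta(s)t}(x(s))$, and the subsequent lemma upgrades this to an arbitrary $(\ze,b)\in\KLraf$, first reducing to $b=0$ by means of \COR{Cor_magic5} and then feeding the resulting periodic section back into Lemma~\ref{Lem_bzero}. Granting $b=0$, the identity $\ze(t)=\DF{\phiet}{\ze(0)}$ is immediate: the restricted form $\Lraf(\,\cdot\,,0)(\,\cdot\,,0)$ is exactly the pseudo‑Hessian of the classical action functional of the Hamiltonian generating $\phit$ (\DEF{Def_assobjects}), so \COR{Cor_magic5} — i.e. \LEM{Lem_magic4} together with \LEM{Lem_magic6} — applies.

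For the fixed‑point equation I would note that $\ze$ is a section of $\pb{u}{TW}$ over $S^1=\R/\Z$, hence periodic, so $\ze(0)=\ze(1)$ (this is exactly the periodicity recorded in \LEM{Lem_magic6}, now available since we have reduced to $b=0$); evaluating $\ze(t)=\DF{\phiet}{\ze(0)}$ at $t=1$ then gives $\ze(0)=\ze(1)=\DF{\phie}{\ze(0)}$. For the splitting statement I would differentiate $\phi^\eta(r,x)=\bigl(r/\rho_\eta(x),\vp^\eta(x)\bigr)$ from \eqref{assfunctions}, using that $\rho_\eta$ and $\vp^\eta$ depend only on the $\Si$‑coordinate. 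Writing $\ze(0)=\ze_r\p{r}+\ze_\Si$ with $\ze_\Si\in T_{u(0)}\Si$, the $\p{r}$‑component of $\DF{\phie}{\ze(0)}$ comes out to $\ze_r/\rho_\eta-\bigl(r/\rho_\eta^2\bigr)\dfm{\d\rho_\eta}{\ze_\Si}$ and the $\Si$‑component to $\DF{\vpe}{\ze_\Si}$. Substituting the normalizations $\rho_\eta=1$ and $r=1/h_\eta$ from \eqref{eq_rafcrituseful} and comparing with $\ze(0)=\ze_r\p{r}+\ze_\Si$, the $\p{r}$‑components give $\dfm{\d\rho_\eta}{\ze_\Si}/h_\eta=0$, whence $\dfm{\d\rho_\eta}{\ze_\Si}=0$ because $h_\eta>0$ for a positive path; the $\Si$‑components give $\DF{\vpe}{\ze_\Si}=\ze_\Si$.

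I do not anticipate a real obstacle: all of the analysis lives in the lemmas already proven, and this proposition only reassembles it. The single point that merits a sentence of care is the periodicity $\ze(0)=\ze(1)$: the variations $v(s,t)$ employed to represent kernel elements in Lemma~\ref{Lem_bzero} are deliberately not loops, so $\ze(0)=\ze(1)$ must be obtained either from the kernel condition itself — exactly the step in the proof of Lemma~\ref{Lem_bzero} where $(\ze,b)\in\KLraf$ forces $\p{s}v(0,0)=\p{s}v(0,1)$ — or, more cleanly, by passing to the $b=0$ case and quoting \LEM{Lem_magic6}. I would also double‑check that the normalizations $\rho_\eta=1$ and $r=1/h_\eta$ are genuinely in force (they are, by \eqref{eq_rafcrituseful}) before dividing by $h_\eta$.
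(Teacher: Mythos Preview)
Your proposal is correct and follows essentially the same route as the paper. The paper's proof dispenses with (I)--(III) in a single sentence (``We have proven all of the above except \ref{list_kerconditionsfour}''), then computes $\DF{\phie}{\ze(0)}$ exactly as you do and equates components; the only cosmetic difference is that you substitute $r=1/h_\eta$ and divide by the positive $h_\eta$, whereas the paper keeps the generic $r>0$ (and in fact silently drops the factor of $r$ in the last displayed line of \eqref{eq_dphiformula}, which is harmless since $r>0$).
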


\begin{proof} We have proven all of the above except \ref{list_kerconditionsfour}.  We use the definition of $\phi$ (see \DEF{Def_Mapsetup})
plus the fact that for a critical point $\cp$ there holds
\beqn
u(t)=\phi^{\eta t}(u(0)), \quad \rho_{\eta}(u(0))=1
\eeq
to compute 
\begin{equation}\label{eq_dphiformula}
\begin{aligned}
\DF{\phie}{\ze(0)} &= \frac{1}{\rho_{\eta}}\,\ze_r \p{r} + \DF{\vpe}{\ze_r \p{r}} - \frac{r \dfm{\d\rho_{\eta}}{\ze_{\Si}}}{\rho_{\eta} ^2 } \p{r} + \DF{\vpe}{\ze_{\Si}} \\
&= \ze_r \p{r} + 0 - r \dfm{\d\rho_{\eta}}{\ze_{\Si}} \p{r} + \DF{\vpe}{\ze_\Si} \\
&= \left( \ze_r - \dfm{\d\rho_{\eta}}{\ze_{\Si}} \right) \p{r} + \DF{\vpe}{\ze_\Si}. 
\end{aligned}
\end{equation}
\ref{list_kerconditionsfour} follows by setting the last expression in \eqref{eq_dphiformula} equal to $\ze_r \p{r} + \ze_{\Si}.$
\end{proof}

\begin{Rmk} 
We stress that \PROP{Prop_kerconditions} gives necessary conditions for $(\ze,b)$ to lie in $\KLraf$.  It is equivalent to saying that $\Lraf(\ze,b)(\mu,0)=0$, i.e.
\beqn
\dd{s}\RE{s=0} \left[\int_0^1 \dfm{\om}{\p{t} v-\eta(s) \hvf{\myHest},\mu}  \d t\right] = 0, \quad \FA \mu \in L^2 (\pb{u}{TW}).
\eeq 
However, the space of vanishing sections may be reduced by requiring that they must also lie in the kernel of the second component.
\end{Rmk}

\begin{Rmk} 
In order to simplify things, we will not compute the full kernel of the second component, i.e. determining all $(\ze,b)$ such that  
\beq\label{eq_comptwo}
\Lraf(\ze,b)(0,\hat{b})=0, \quad \FA \In{\hat{b}}{\R}.
\eeq
Using the relationship 
\beqn 
A \cap B= A \cap \left(A\cap B \right),
\eeq
it suffices to determine those solutions of \eqref{eq_comptwo} which also meet the criteria of \PROP{Prop_kerconditions}. \end{Rmk}

\begin{Prop}
$(\ze,0) \in \KLraf$ if and only if $\ze$ satisfies the conditions of \PROP{Prop_kerconditions} and 
$\dfm{\d\myHe}{\ze(1)} = 0. $
\end{Prop}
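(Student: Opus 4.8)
The plan is to split the kernel condition into its two natural halves and to identify the second one with the stated scalar equation. By bilinearity of $\Lraf$, a pair $(\ze,0)$ lies in $\KLraf$ exactly when $\Lraf(\ze,0)(\mu,0)=0$ for all $\mu$ \emph{and} $\Lraf(\ze,0)(0,\hat b)=0$ for all $\hat b\in\R$. By \PROP{Prop_kerconditions} together with the Remark following it, the first condition is precisely the assertion that $\ze$ satisfies the four conditions of \PROP{Prop_kerconditions}; in particular it forces $\ze(t)=\DF{\phiet}{\ze(0)}$ and $\ze(0)=\ze(1)=\DF{\phie}{\ze(0)}$. It therefore suffices to show that, granted these conditions, the second condition is equivalent to $\dfm{\d\myHe}{\ze(1)}=0$; the proposition then follows in both directions.

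The second step is the computation of $\Lraf(\ze,0)(0,\hat b)$. Since $\cp$ is a critical point, this pairing may be evaluated --- its precise sign being irrelevant to the kernel condition --- by realizing $(\ze,0)$ through a genuine loop-variation of $u$, which is possible because $\ze(0)=\ze(1)$, and realizing $(0,\hat b)$ by moving the $\R$-coordinate with the loop held fixed. In these terms it equals $\hat b$ times the derivative with respect to $\eta'$, evaluated at $\eta'=\eta$, of the $(\ze,0)$-directional derivative of $\raf$ at $(u,\eta')$. A short computation using $\dotu(t)=\eta\,\hvf{\myHet}(u(t))$ shows that this directional derivative, viewed as a function of $\eta'$, equals $\eta\int_0^1\dfm{\d\myHet}{\ze(t)}\,\d t-\eta'\int_0^1\dfm{\d\myH{\eta' t}}{\ze(t)}\,\d t$, the boundary term $[\dfm{\la}{\ze}]_0^1$ again vanishing because $\ze(0)=\ze(1)$; in particular it vanishes at $\eta'=\eta$, reconfirming that $\cp$ is a critical point of $\raf$.

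Differentiating at $\eta'=\eta$ removes the first integral, and a single integration by parts in $t$ telescopes what remains to $-\dfm{\d\myHe}{\ze(1)}$. This integration by parts uses $\ze(t)=\DF{\phiet}{\ze(0)}$ together with the identity $\dfm{\d\myHet}{\hvf{\myHet}}=0$, and is formally the same manipulation by which the $\R$-part of $\d\raf=0$ is brought into the form $\myHe(u(1))=0$. Hence $\Lraf(\ze,0)(0,\hat b)=\pm\hat b\,\dfm{\d\myHe}{\ze(1)}$, which vanishes for all $\hat b$ if and only if $\dfm{\d\myHe}{\ze(1)}=0$. Combined with the first paragraph, this yields both implications.

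The main difficulty will lie in the bookkeeping rather than in any deep point. One must make rigorous the reduction of the second half of the kernel condition to the $\eta$-directional computation: the loop must be held fixed so that the fact that $\cp$ is a critical point of $\raf$ on the \emph{loop} space can be used, and this is exactly where the second and third conditions of \PROP{Prop_kerconditions} enter --- the third to realize $\ze$ by a loop-variation and to discard the boundary term, the second to carry out the integration by parts. One must also track the nonautonomous contribution --- the time-derivative of the contact Hamiltonian --- honestly through the integration by parts: it is present in every intermediate expression and cancels only at the final step, exactly as in the derivation of the critical equations.
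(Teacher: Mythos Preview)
Your plan is correct and follows the same overall architecture as the paper: split the kernel condition into the loop-direction half (handled by \PROP{Prop_kerconditions}) and the $\R$-direction half, then identify the latter with the scalar equation $\dfm{\d\myHe}{\ze(1)}=0$.

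The one genuine difference is in the execution of the second half. You differentiate in the $(\ze,0)$-direction first and then in $\eta'$, leaving yourself with an integration by parts that relies on the identity $\frac{\d}{\d t}\bigl[\dfm{\d\myHet}{\ze(t)}\bigr]=\eta\,\dfm{\d\dot H_{\eta t}}{\ze(t)}$ (which in turn uses condition II of \PROP{Prop_kerconditions} together with $\dfm{\d H}{X_H}=0$). The paper differentiates in the opposite order: it first computes the $\eta$-partial of $\raf$ at a general point, obtaining a term $\myHe(u(1))$ plus an integral $\int_0^1 t\,\dfm{\d\myHet}{\dot u(t)}\,\d t$, and then linearizes in $\ze$ using the flow-line variation $v(s,t)=\phi^{\eta t}(x(s))$. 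Along this variation $\partial_t v=\eta\,\hvf{\myHet}$ for all $s$, so the integral is identically zero and its $\ze$-derivative is trivially zero---no integration by parts is needed. Both routes rest on exactly the same two ingredients; the paper's packaging is slicker because the clever choice of variation absorbs the nonautonomous bookkeeping you flag in your last paragraph, whereas your route keeps that bookkeeping explicit but is perhaps more transparent about where each hypothesis enters.
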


\begin{proof} We compute the differential with respect to $\eta$ and find
\bea
\dd{s}\RE{s=0} &\left[\int_{S^1} \pb{u}{\la} - \eta(s) \int_0^1 \myHest (u(t)) \d t  \right] \\
&= -\dd{s}\RE{s=0} \left[  \eta(s) \int_0^1 \myHest (u(t)) \d t \right] \\ 
&= -\dd{s}\RE{s=0} \left[ \int_0^{\eta(s)} \myH\tau \left( u \left( \frac { \tau } { \eta(s) } \right) \right) \d \tau \right] \\
&= -\eta'(s) \left[  \myHe (u(1)) - \int_0^{ \eta(s) } \frac { \tau }{ \eta(s) ^2 } \dfm{\d\myH\tau}{\dot{u} \left( \frac{\tau}{\eta(s)} \right)} \d \tau \right] \\
&= -\eta'(s) \left[ \myHe(u(1)) - \int_0^1 t \dfm{\d\myHest}{\dot{u}(t)} \d t \right]. 
\eea
To complete the proof, it suffices to show that 
$$\int_0^1 \ze t \dfm{\d \myHet}{\dot{u}(t)} \d t$$
vanishes.  Precisely as in \LEM{Lem_bzero}, we choose 
\beqn
v(s,t):=\phi^{\eta(s)t}(x(s)), \quad \mbox{where } \p{s} v(0,t)=\ze,\,\eta'(0)=0
\eeq
and compute
\beq\label{eq_secondcomp}
\begin{aligned}
\int_0^1 t \dfm{\d\myHest}{\p{t} v} \d t &= \int_0^1 t \dfm{\d\myHest}{\eta(s) \hvf{\myHest}} \d t \\
&=\eta(s) \int_0^1 t \dfm{\d\myHest}{\hvf{\myHest}} \d t \\
&=0.
\end{aligned}
\eeq
The last equality in \eqref{eq_secondcomp} uses the fact that for any Hamiltonian $H$,
\beqn
\iota_{\hvf{H}} \d H =-\dfm\om{\hvf{H},\hvf{H}} =0.
\eeq
Thus since the expression in \eqref{eq_secondcomp} is identically zero, its derivative is zero.
\end{proof}

\begin{Prop}[Summary]\label{Prop_kerlrafconditions}
At a critical point $\cp$ of the Rabinowitz action functional, the kernel of the Hessian is identified with
\bean
\KLraf &= \Set{v}{T_{u(0)}W}{\DF{\phie}{v}=v,\, \dfm{\d\myHe}{v}=0} \\
&\cong \Set{a \p{r} + v_\Si}{T_{u(0)}W}{\DF{\vpe}{v_\Si}=v_\Si, \, \dfm{\d \rhoe}{v_\Si}=0}
\eea
\end{Prop}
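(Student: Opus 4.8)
The plan is to assemble the preceding analysis — the lemmas forcing $b=0$ for any $(\ze,b)\in\KLraf$, \PROP{Prop_kerconditions} (the necessary conditions on such a $(\ze,0)$), the proposition immediately above (which upgrades these to a characterization by adjoining the condition $\dfm{\d\myHe}{\ze(1)}=0$), and formula \eqref{eq_dphiformula} — and then to transcribe the outcome into the splitting $TW=T\R\oplus T\Si$. Since \PROP{Prop_kerlrafconditions} is a summary, almost everything is already in hand.

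First I would observe that, by the $b=0$ lemmas, every element of $\KLraf$ has the form $(\ze,0)$, and that by the proposition just above, $(\ze,0)\in\KLraf$ precisely when $\ze$ satisfies the conclusions of \PROP{Prop_kerconditions} together with $\dfm{\d\myHe}{\ze(1)}=0$. Those conclusions say in particular that $\ze$ is determined by the single vector $v:=\ze(0)\in T_{u(0)}W$ via $\ze(t)=\DF{\phiet}{v}$, and that $\ze(0)=\ze(1)=\DF{\phie}{v}$; conversely, for any $v$ the formula $\ze(t):=\DF{\phiet}{v}$ defines a loop exactly when $\DF{\phie}{v}=v$, and in that case (using \eqref{eq_dphiformula} for part \ref{list_kerconditionsfour}) $\ze$ meets all of \PROP{Prop_kerconditions}. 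Since $\ze(1)=v$, the extra condition reads $\dfm{\d\myHe}{v}=0$, so $(\ze,0)\mapsto\ze(0)$ is a bijection of $\KLraf$ with $\Set{v}{T_{u(0)}W}{\DF{\phie}{v}=v,\,\dfm{\d\myHe}{v}=0}$, which is the first identification.

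For the second description I would write $v=a\p{r}+v_\Si$. Formula \eqref{eq_dphiformula} shows that $\DF{\phie}{v}=v$ is equivalent to $\DF{\vpe}{v_\Si}=v_\Si$ and $\dfm{\d\rhoe}{v_\Si}=0$ (the $\p{r}$-component imposing nothing further once the latter holds), i.e.\ to the conditions in part \ref{list_kerconditionsfour} of \PROP{Prop_kerconditions}. It remains to absorb $\dfm{\d\myHe}{v}=0$: since $\myHe=r\,h_\eta-1$ with $h_\eta=\myCHe$ a function on $\Si$, and $r=1/h_\eta$ at a critical point, a one-line computation gives $\dfm{\d\myHe}{v}=a\,h_\eta+\frac{1}{h_\eta}\dfm{\d h_\eta}{v_\Si}$; as $\vp$ is a positive path, $h_\eta>0$, so this equation has a unique solution $a$ for each $v_\Si$. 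Hence the projection $a\p{r}+v_\Si\mapsto v_\Si$ is a linear isomorphism of $\KLraf$ onto $\Set{v_\Si}{T_{u(0)}\Si}{\DF{\vpe}{v_\Si}=v_\Si,\,\dfm{\d\rhoe}{v_\Si}=0}$, which is the asserted $\cong$, with the $\p{r}$-coefficient read as the dependent quantity $a$ rather than as a free parameter.

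I do not expect a genuine obstacle — the substance is all in \PROP{Prop_kerconditions} and the proposition above it, and the only new computation is the expansion of $\dfm{\d\myHe}{v}$ in the splitting. The two points that want care are (i) the bookkeeping that $\ze(0)=\ze(1)$ (conclusion \ref{list_kerconditionsthree} of \PROP{Prop_kerconditions}), which is what makes ``$v$'' unambiguous and lets us evaluate $\d\myHe$ at $u(0)=u(1)$, and (ii) the remark that positivity ($h_\eta>0$) is precisely what keeps the $\p{r}$-component nondegenerate — the Lagrange-multiplier mechanism flagged in the discussion preceding \DEF{Def_mindeg}.
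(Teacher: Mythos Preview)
Your proposal is correct and follows essentially the same route as the paper: the first identification is read off directly from \PROP{Prop_kerconditions} together with the proposition immediately preceding \PROP{Prop_kerlrafconditions}, and the second is obtained by expanding $\dfm{\d\myHe}{v}=a\,h_\eta+r\,\dfm{\d h_\eta}{v_\Si}$ and using $h_\eta>0$ to solve uniquely for $a$. Your write-up is in fact more explicit than the paper's terse ``interpolating step'', and your closing remark that $a$ should be read as determined by $v_\Si$ is exactly the content of the paper's sentence ``it is always possible to solve for $a$ uniquely.''
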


\begin{proof}
The identification follows from an interpolating step.  Given $a\p{r}+v_\Si,$ we know that the condition of the second line is necessary.  To see that it is sufficient to obtain a bijection of sets, observe that 
$$  \dfm{\d\myHe}{v}= a h_\eta(u(1)) + r(u(1)) \dfm{h_\eta}{v_\Si}.$$
Using the positivity of $h_\eta$, we see that it is always possible to solve for $a$ uniquely.
\end{proof}

\subsection{The set of admissible P-loops}

The final step is to show that for a generic P-loop $\vp,$ the Rabinowitz action functional is Morse for noninteger critical values.  We begin by describing summarizing equivalent conditions for this to be true, shown in the last section.

\begin{Rmk}\label{Rmk_Equivconditions}
The following are equivalent for $c \in \si(\raf)\setminus\Z.$
\begin{enumerate}
\item $c$ is a Morse critical value of $\raf$
\item $\BAR{\phi}$ is transverse to the diagonal at $t=c$
\item For any $u$ with $(u,c)\in \crit\raf$, $E_1(\phie,u(0))$ is spanned by $\p{r}$
\item For any $u$ with $(u,c)\in \crit\raf$, $\BAR{\vp}$ is transverse to the diagonal at $t=c$ and $E_1(\vp,c)$ is transverse to $\Ker{\d \rho_c}$ at $u(0)$.
\end{enumerate}
Here we use the terminology $E_1$ and $\BAR{\vp}$ from \DEF{Def_Lefschetz}.
\end{Rmk}

We now show that this holds for a generic P-loop.

\begin{Prop}\label{Prop_boom}
The conditions in \RMK{Rmk_Equivconditions} are equivalent to the statement that $\BAR{\vp}^c$ is transverse to the diagonal and $\d \rho_c(x) \neq 0$ for any $x \in \Si$ with $\vp^c(x)=x$ and $\rho_c(x)=1.$  This is satisfied for a generic $\vp \in \pls.$
\end{Prop}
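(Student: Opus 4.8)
The plan is to split \PROP{Prop_boom} into two parts: first, establish the equivalence between the conditions of \RMK{Rmk_Equivconditions} and the concrete geometric statement that $\BAR{\vp}^c$ is transverse to the diagonal while $\d\rho_c$ is nonvanishing at the relevant fixed points; second, show this geometric condition is generic in $\pls$. For the equivalence, I would work through the chain in \RMK{Rmk_Equivconditions}, using the block-matrix form of $\phit_*$ displayed in \SECT{section_ssperturbploop} relative to $T_xW = \mr{span}(\p{r}) \oplus \mr{span}(R) \oplus \xi$. Condition (3) says $E_1(\phie, u(0))$ is exactly $\mr{span}(\p{r})$; by \PROP{Prop_kerlrafconditions} the kernel of the Hessian is $\{a\p{r} + v_\Si : \DF{\vpe}{v_\Si} = v_\Si,\ \d\rho_\eta(v_\Si) = 0\}$, so this kernel is one-dimensional (Morse for the Morse-Bott-modded-out functional, i.e. Morse along the transverse directions) precisely when the only $v_\Si \in T\Si$ fixed by $\vp^c_*$ with $\d\rho_c(v_\Si) = 0$ is $v_\Si = 0$. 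Unpacking $E_1(\vp, c) \cap \Ker{\d\rho_c} = 0$ via \LEM{Lem_mandatorydegeneracy}, which identifies fixed vectors of $\vp^c_*$ with the discriminant/Lefschetz-degeneracy data of $\BAR{\vp}$, yields that this is equivalent to $\BAR{\vp}^c$ being transverse to the diagonal together with $\d\rho_c \neq 0$ on the fixed-point set. I expect this bookkeeping to be routine given the machinery already set up.

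The genericity half is the substantive part, and I would model it on \APP{section_GenericSymp} (the generic-symplectic appendix referenced in \RMK{Rmk_doublemagic}), adapted to the contact setting. The strategy is a standard Sard–Smale / jet-transversality argument: consider the universal moduli space of pairs $(\vp, (c,x))$ with $\vp \in \pls$ (or a suitable Banach manifold completion thereof), $\vp^c(x) = x$, $\rho_c(x) = 1$, and study the map whose vanishing locus detects failure of transversality of $\BAR{\vp}^c$ to the diagonal or vanishing of $\d\rho_c(x)$. One shows the universal version is transverse — i.e. that one can perturb $\vp$ within $\pls$ to move $\BAR{\vp}$ off the bad locus — and then invokes Sard–Smale to conclude that the set of $\vp \in \pls$ for which every relevant fixed point is good is residual. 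The key technical point is that perturbations of $\vp$ within $\pls$ are rich enough: given a discriminant point $x$ at time $c$, one must produce a compactly supported perturbation $\vp \rightsquigarrow \vp'$ still positive (the $C^1$-smallness from \PROP{Prop_MBatZ}'s argument guarantees positivity is preserved) that changes the $1$-eigenspace of $\BAR{\vp}^c$ or the differential of $\rho_c$ in a prescribed first-order direction. Since one can insert an arbitrary contact isotopy supported near $x$ in a short time interval around $c$, the space of available first-order deformations of the return map at $x$ surjects onto the relevant jet space; here the contact condition constrains the deformations to the contact Lie algebra, but that is exactly the setting handled in \APP{section_GenericSymp}.

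The main obstacle I anticipate is reconciling the genericity argument with the loop constraint already imposed in \PROP{Prop_MBatZ}. There, one needs $\tvp = \vp$ for $|t - k| \leq \ep$ near integers, so the perturbations achieving noninteger Morse-ness must be carried out rel a neighborhood of $\Z$; simultaneously, the perturbation must itself be a loop. As in the sketch proof of \PROP{Prop_MBatZ}, the resolution is that the action spectrum avoids a neighborhood of $\Z$ (the Claim there), so no discriminant points live near integer times, and hence the rel-endpoints perturbations do not interfere with the Morse-Bott structure at integral critical values — the two conditions are localized to disjoint regions of the time circle. Care is needed to confirm that the residual set of good $\vp$'s produced by Sard–Smale intersects $\pls_\md$ (which by \RMK{Rmk_doublemagic} is itself open and dense), so one takes the intersection of two residual sets. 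Beyond this, one also needs that for a fixed $\vp$ there are only finitely many (or at least a manageable, e.g. countable and discretely-parametrized) discriminant points with period in any bounded interval so that the countable intersection of residual conditions is again residual; this follows from compactness of $\Si$ together with the Morse-ness one is establishing, which isolates the fixed points — a mild circularity that is resolved in the usual way by a nested exhaustion argument over compact time intervals.
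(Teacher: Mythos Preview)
Your proposal is correct and reaches the same conclusion as the paper, but the execution differs in two places worth noting.

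For the equivalence, you work directly with the \emph{kernel} via \PROP{Prop_kerlrafconditions}: $\KLraf$ is trivial iff $E_1(\vp^c,x)\cap\Ker(\d\rho_c)=\{0\}$, and then \LEM{Lem_mandatorydegeneracy} translates this into the transversality-plus-nonvanishing statement. The paper instead argues via the \emph{cokernel}: it shows $\mr{coker}(\vp^c_*-I)=\mr{span}(R)$ (so $\xi$ lies in the image of $\phi^c_*-I$), and then that $\dim\mr{coker}(\phi^c_*-I)=1$ holds iff $\p{r}$ lies in the image of $\phi^c_*-I$, which by \eqref{eq_dphiformula} occurs iff $\d\rho_c$ does not vanish on $E_1(\vp^c,x)$. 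Your route is arguably more direct given that \PROP{Prop_kerlrafconditions} has already computed the kernel; the paper's cokernel computation has the minor advantage of making explicit the block-matrix form of $\phi^c_*$ relative to $\mr{span}(\p{r})\oplus\mr{span}(R)\oplus\xi$, which it then reuses.

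For genericity, you propose an abstract Sard--Smale argument over a universal moduli space of $(\vp,(c,x))$. The paper is more concrete: it writes down the Jacobian of a lifted contactomorphism in block form, identifies the admissible perturbation directions as $\exp(JS)$ with $S$ of a specific shape (supported in the $\mr{span}(R)\otimes\xi$ and $\xi\otimes\xi$ blocks), and then invokes the linear-algebra genericity of \APP{section_GenericSymp} directly. Your approach would work and is the standard Floer-theoretic idiom; the paper's buys a shorter argument by reducing immediately to the finite-dimensional matrix statement \PROP{Prop_GenericSympMat} already proved. Your care about the loop constraint and time-localization away from $\Z$ is correct and matches what the paper does implicitly via \PROP{Prop_MBatZ}; the paper does not spell out the countable-intersection issue you flag, but your nested-exhaustion resolution is the standard one.
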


\begin{proof} 
We wish to show that for a generic $\vp \in \pls$, $E_1(\phi^c_*)$ is one-dimensional at any critical point.  This is equivalent to
\beq\label{eq_cok}
\Dim{\mr{coker}(\phi^c_*-I)}=1.
\eeq 
Since $\BAR{\vp}^{c}$ is transverse to the diagonal, $E_1(\vp^c_*)$ is one-dimensional, and since $\rho_c=1$ for a critical point, we obtain that 
$$\mr{coker}(\vp^c_*-I)=\mr{span}(R).$$
This implies as well that the contact distribution $\xi$ lies in the image of $\phi^c_*-I.$  Combining this with \eqref{eq_dphiformula}, we see that 
$$R \in \mr{coker}(\phi^c_*-I),$$
and hence the condition that the cokernel is one-dimensional is equivalent to the condition that $\p{r}$ lies in the image of $\phi_*^c-I.$  Using \eqref{eq_dphiformula} again, this is equivalent to the nonvanishing of $\d \rho_c.$  If we split the tangent space
$$T_{u(0)}\hat{W}=\mr{span}(\p{r}) \oplus \mr{span}(R) \oplus \xi,$$
any symplectomorphism whose Jacobian always has the form 
% \[\begin{aligned}
% &\left[
% \begin{array}{cc|c}
% 	b & a &\left[ w \right] \\
% 	0 &\frac{1}{b} &\mbf{0} \\ \hline 
% 	\mbf{0}
% 		&\left[v \right] 
% 		&\left. \mbf{A} \in \sg \right. \\
% \end{array}\right]

\[
\begin{aligned}
\renewcommand{\arraystretch}{4}
\left[\begin{array}{cc|c}
\left. \renewcommand{\arraystretch}{1} \begin{array}{c} b \\ 0 \\ \end{array} \right.
& \left.\renewcommand{\arraystretch}{1}\begin{array}{c} a \\ \frac{1}{b} \\ \end{array} \right.
& \left.\renewcommand{\arraystretch}{1}\begin{array}{c} \mbf{w} \\ \mbf{0} \\ \end{array}\right. \\ \cline{1-3}
\mbf{0} & \mbf{v} & \mbf{A} \in \sg \\
\end{array}\right]
&\quad\mbox{where } & \left. \begin{array}{c}  b>0, \, a>0, \\ \mbf{v} \in \mr{Mat}_{1,2n}(\R), \\ \mbf{w} \in \mr{Mat}_{2n,1}(\R). \\  \end{array} \right.
\end{aligned}
\]	
is the lift of a contactomorphism.  At a discriminant point, $b=1$ and condition \eqref{eq_cok} will be satisfied if $\dim(E_1(\vpe_*))=1$ and $a$ or $w$ is nonzero.  Using the techniques of \APP{section_GenericSymp}, if we perturb using any map whose Jacobian locally has the form
\[
\exp(JS), \quad S= 
\left[ 
\begin{array}{cc|c}
0 & 0 & \mbf{0} \\ 
0 & 0 & \circledast \\ \hline 
\circledast & \mbf{0} & \mbf{S'} \\ \end{array} 
\right],
\] 
where $J$ interchanges $\p{r}$ and $R$, the result generically has $\ker(\tp_*-I)=\mr{span}(\p{r})$ at any lifted discriminant point.
\end{proof}

\begin{Def}\label{Def_admissible}
We say $\vp$ is \emph{$\RFH$-admissible} if it is minimally degenerate and if for any discriminant pair $(x,\eta)$ with $\eta \notin \Z$, one (and hence all) of the following equivalent conditions holds:
\begin{enumerate}
\item $\d\rhoe(x)\neq0$
\item $\DF{\vpe}{R} \neq R$
\item $\pb{\vp_\eta}{\d \al} \neq \d \al.$
\end{enumerate}
We denote the set of admissible P-loops by $\pls_a.$
\end{Def}

\begin{Prop}\label{Prop_admissible}
Given any admissible P-loop $\vp \in \pls_a,$ $\raf$ is Morse Bott, hence the Rabinowitz Floer homology is well-defined.  More precisely, $\raf$ is Morse-Bott with critical manifold $\Si$ for integer critical values, and Morse for noninteger critical values.  Further, $\pls_a$ is open and dense in $\pls$, and given any P-loop $\vp$, there exists $\tvp \in \pls_a$ which agrees with $\vp$ in a uniform neighborhood of the integers and can be chosen arbitrarily close to $\vp$ in the $C^\INF$ norm.
\end{Prop}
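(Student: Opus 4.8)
The plan is to read the local normal form of $\raf$ straight off the Hessian computations of the preceding subsections, to get openness of $\pls_a$ from the openness of minimal degeneracy, and to get density by chaining the near-integer perturbation of \PROP{Prop_MBatZ} with the localized genericity construction buried in the proof of \PROP{Prop_boom}. For the Morse--Bott claim, the integer critical values require no hypothesis: \PROP{Prop_MBatZ} already gives that for every $\vp \in \pls$, $\raf$ is Morse--Bott along $\Si = \{r=1\}$ with $\KLraf \cong T\Si$. For a noninteger critical value $c$, \PROP{Prop_kerlrafconditions} identifies $\KLraf$ at a critical point $(u,c)$ with $\{a\p{r}+v_\Si : \DF{\vp^c}{v_\Si}=v_\Si,\ \dfm{\d\rho_c}{v_\Si}=0\}$. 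Since $\vp \in \pls_a$ is in particular minimally degenerate, \RMK{Rmk_doublemagic} says $\BAR{\vp}^c$ is transverse to the diagonal, so the fixed subspace of $\DF{\vp^c}{\cdot}$ is the line $\mr{span}(R)$; as $\rho_c \equiv 1$ on the critical set, the admissibility condition $\d\rho_c(x) \neq 0$ (equivalently $\DF{\vp^c}{R} \neq R$) says exactly that this line is transverse to $\Ker{\d\rho_c}$, forcing $v_\Si = 0$ and $\KLraf = \mr{span}(\p{r})$. This is condition (III) of \RMK{Rmk_Equivconditions}, so $c$ is a Morse critical value, and $\raf$ is Morse--Bott with critical manifold $\Si$ at integers and Morse elsewhere --- precisely the input needed to define $\RFH$ following \cite{AFNMI}.

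Next I would prove openness of $\pls_a$ in $\pls$. Minimal degeneracy is open by \RMK{Rmk_doublemagic}. For $\vp \in \pls_a$ the noninteger critical values of $\raf$ are isolated (being Morse) and, by the Claim in the proof of \PROP{Prop_MBatZ}, bounded away from $\Z$ by a uniform $\ep$; so by compactness of $\Si$ the discriminant set $\{(\eta,x): \vp^\eta(x)=x,\ \rho_\eta(x)=1\}$ meets each compact $\eta$-window in a finite, Lefschetz-transverse set along which $\d\rho_\eta \neq 0$. A transverse intersection persists and spawns no new branches under $C^1$-small perturbation, so a $\vp'$ close to $\vp$ has discriminant set $C^1$-close to that of $\vp$ (still avoiding a neighborhood of $\Z$) with $\d\rho'_\eta$ still nonvanishing there; hence $\vp' \in \pls_a$.

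Then I would prove density together with the refinement that $\vp$ is left untouched near $\Z$. Given $\vp \in \pls$, first pass to $\tvp_1 \in \pls_\md$ via \PROP{Prop_MBatZ}, $C^\INF$-close to $\vp$ and equal to $\vp$ on $\{|t-k| \le \ep\}$; it then remains only to make $\d\rho_c \neq 0$ at the finitely many noninteger discriminant times $c \in (0,1)$ of $\tvp_1$. Around each lifted discriminant point I would insert the perturbation from the proof of \PROP{Prop_boom}, a family whose Jacobian is locally $\exp(JS)$ with $S$ in the displayed block form, supported in a short time interval about $c$ (hence away from $\Z$) and in a Darboux chart about the point. Being $C^1$-small it preserves positivity and the loop condition, and being contact it stays in $\pls$; the transversality argument of \APP{section_GenericSymp} shows a generic such insertion forces $E_1$ of the lifted return map to equal $\mr{span}(\p{r})$, i.e.\ $\d\rho_c \neq 0$. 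Doing this at all such $c$ (finitely many, kept isolated by the openness step after the first small perturbation) and extending periodically yields $\tvp \in \pls_a$ agreeing with $\vp$ near $\Z$ and arbitrarily $C^\INF$-close to it; density is immediate.

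The step I expect to be the real obstacle is this last genericity construction: the parametric transversality argument of \APP{section_GenericSymp} must be run \emph{inside} the constrained class $\pls$ --- loops of \emph{contactomorphisms} with \emph{strictly positive} contact Hamiltonian, not a free space of Hamiltonian symplectomorphisms --- and the perturbations must be localized in time so as not to break the loop closure at $t \in \Z$ or the Morse--Bott structure there. Verifying that the model perturbations $\exp(JS)$ remain admissible P-loops and genuinely cut $E_1$ down to $\mr{span}(\p{r})$ transversally at every discriminant point, uniformly enough to handle all of $(0,1)$ at once, is the crux; the Morse--Bott, openness, and chaining steps are then comparatively formal.
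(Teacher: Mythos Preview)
Your outline matches the paper's proof, which is a two-sentence appeal to \PROP{Prop_MBatZ} and \PROP{Prop_boom}; the openness and density arguments you spell out are exactly what the paper leaves implicit.  That said, your unpacking of the Morse step at noninteger $c$ contains two slips.  First, the fixed line of $\vp^c_*$ at a discriminant point is \emph{not} $\mr{span}(R)$: admissibility condition (II) in \DEF{Def_admissible} says precisely that $\vp^c_* R \neq R$, and in the proof of \PROP{Prop_boom} it is the \emph{cokernel} of $\vp^c_*-I$, not the kernel, that is identified with $\mr{span}(R)$.  Second, $\KLraf$ is never $\mr{span}(\p{r})$: since $\d\myH_c(\p{r})=h_c>0$, the vector $\p{r}$ lies in $E_1(\phi^c_*)$ but is excluded from $\KLraf=E_1(\phi^c_*)\cap\ker\d\myH_c$; the correct conclusion is $\KLraf=\{0\}$, equivalently $E_1(\phi^c_*)=\mr{span}(\p{r})$, which is indeed condition (III) of \RMK{Rmk_Equivconditions}.

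More substantively, your direct kernel-side inference---``$\d\rho_c\neq 0$ forces the fixed line of $\vp^c_*$ out of $\ker\d\rho_c$''---does not follow as stated: nonvanishing of the covector $\d\rho_c$ says nothing a priori about its restriction to the one-dimensional $E_1(\vp^c_*)$.  The equivalence you want is exactly what \PROP{Prop_boom} establishes, but via the \emph{cokernel} of $\phi^c_*-I$ (showing $\p{r}\in\mr{Im}(\phi^c_*-I)$ iff $\d\rho_c\neq 0$) together with rank--nullity.  The cleanest repair is simply to cite \PROP{Prop_boom} for the Morse claim rather than rederive it; with that change your argument is complete and coincides with the paper's.
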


\begin{proof}
Combining \PROP{Prop_MBatZ} with \PROP{Prop_boom} shows that $\RFH$ is well-defined for any $\vp \in \pls_a.$ \PROP{Prop_MBatZ} gives a recipe for producing from any P-loop a minimally degenerate P-loop, and we can repeat this procedure using a periodic perturbation which is trivial near the integers to obtain a P-loop satisfying the requirements of \DEF{Def_admissible}. \end{proof}

\section{Applications}\label{section_Applications}
We now explore some consequences of the preceding.  At the heart of these lies the observation that the existence of a positive loop
of contactomorphisms places strong restrictions on the Rabinowitz Floer homology of a Liouville-fillable contact manifold.  Intuitively speaking,
it can't be ``too big.''

\begin{Prop} 
Suppose $\cpairex$ is a Liouville-fillable contact manifold which admits a positive loop of
contactomorphisms $\vp$.  Then for any filling $W$, $\Ga^+(\Si,W)\leq 1$.
\end{Prop}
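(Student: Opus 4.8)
The plan is to exploit the admissible P-loop guaranteed by \PROP{Prop_admissible} to obtain an explicit chain-level model for $\RFH^+_*(\Si,W)$, and then to read off the growth rate directly from the chain complex. First I would replace the given $\vp$ by an admissible $\tvp \in \pls_a$ agreeing with $\vp$ near the integers, using \PROP{Prop_admissible}; this does not affect any statement about $\RFH$. For $\tvp$, the Rabinowitz action functional $\mc{A}_{\tvp}$ is Morse-Bott, with critical manifold a copy of $\Si$ sitting over each integer period $k \in \Z$ (by \PROP{Prop_MBatZ}), together with finitely many isolated Morse critical points in each interval $(k,k+1)$. By \RMK{Rmk_soh}, the action of a critical point equals its period $\eta$, so the action filtration is controlled by $\eta$.

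The key quantitative step is to bound the number of generators of the filtered complex $V_n = \RFH^{(-\ep,c_n)}_*(\Si,W)$ — or at least the rank of the image map $\io_{n,\infty}$ — by something linear in $n$. After a Morse-Bott perturbation of $\mc{A}_{\tvp}$ (replacing each critical manifold $\Si$ by the critical points of an auxiliary Morse function on $\Si$, which is a \emph{fixed} finite number $N$ of generators per integer level) the critical points with action in $(-\ep, c_n)$ number at most $N \cdot (\lfloor c_n \rfloor + 1)$ plus the isolated Morse critical points with period $<c_n$. Here I would use the fact that the isolated critical points come from discriminant points of a \emph{fixed} path $\tvp$ on a compact $\Si$, so on any bounded time interval $[0, c_n]$ there are finitely many of them, and — crucially — because $\tvp$ is periodic with period $1$, the discriminant points in $(k,k+1)$ are in bijection with those in $(0,1)$ for every $k$; hence their count is also linear, bounded by $M \cdot \lceil c_n \rceil$ for a fixed $M$. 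Choosing $c_n$ comparable to $n$, the total number of generators of $V_n$, hence $a(n) = \dim \mr{Im}(\io_{n,\infty})$, is $O(n)$. Therefore $\Ga^+(\Si,W) = \LS_{n\to\infty} \frac{\log a(n)}{\log n} \leq \LS_{n\to\infty}\frac{\log(Cn)}{\log n} = 1$.

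The main obstacle I anticipate is the translation-invariance bookkeeping: one must verify that the Floer data associated to $\tvp$ on the symplectization $\hat W$ genuinely has period-$1$ structure at the chain level, so that the generator count in $(k,k+1)$ really is independent of $k$ — this is where the normalization $\t\al = \al/h_0$ from \eqref{eq_normalizecontactform} and the identification $\mr{Crit}_k(\raf) \cong \Si$ pay off. A secondary technical point is to choose the divergent sequence $c_n \notin \si(\raf)$ compatibly with the sequence of integer levels (e.g. $c_n = n + \tfrac12$ if $n+\tfrac12 \notin \si(\raf)$, perturbing slightly otherwise) and to confirm that the perturbation of the Morse-Bott critical manifolds into Morse critical points, together with the cutoff extension to $\hat W$, introduces no new generators of uncontrolled action — both of which are handled by the cited constructions in \cite{AFNMI}, \cite{CF09}. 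Once the linear generator bound is in place, the growth-rate inequality is immediate from the definition of $\Ga^+$, since the rank of a linear map cannot exceed the number of generators of its source.
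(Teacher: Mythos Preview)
Your proposal is correct and follows essentially the same route as the paper: replace $\vp$ by an admissible P-loop, use periodicity to see that the number of Morse generators in each interval $(k,k+1)$ is a fixed constant, handle the integer-period Morse--Bott components via an auxiliary Morse function on $\Si$ (the paper invokes Frauenfelder's cascades explicitly), and bound $a(n)$ by the resulting linear generator count. The only cosmetic difference is the choice of the sequence $c_n$---the paper takes $c_n \in (n-\ep, n-\tfrac{\ep}{2})$ rather than near $n+\tfrac12$---which is immaterial.
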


\begin{proof} 
The \emph{moral} proof is as follows.  The hypothesis that $\cpair$ admits a P-loop implies that the chain complex
underlying $\RFH$ grows linearly with action.  By \PROP{Prop_admissible}, the existence of \emph{any} P-loop implies the existence of an admissible P-loop, i.e. one for which $\raf$ is Morse-Bott, and for which critical points with non-integer period are Morse.   This implies that the set of critical points with $\eta \in (0,1)$ is finite.  Let us denote its cardinality by $\nu$.  By periodicity, the cardinality of the critical set with $\eta \in (j,j+1)$ is also equal to $\nu$, for any $j \in \Z$.  This implies linear growth of the chain complex, if we focus solely on the summands with non-integer $\eta$-values.

\nl\noindent We wish to make the same claim for the entire chain complex.  To do this, we must specify how to pass from Morse-Bott to Morse.  This is most easily accomplished using the technique of \emph{gradient flows with cascades} developed by Frauenfelder \cite{Frauen03}, see also \cite{CF09} for its use in Rabinowitz Floer homology. To define $\RFH$ with Morse-Bott critical manifolds, one chooses an auxiliary Morse function on each connected component with positive dimension.  There exists a tautological isomorphism of ungraded vector spaces:
$$\bigoplus_{* \in \Z}\mr{RFC}_{\mr{MB},*}^{(a,b)} \left( \Si,W,F,\{f_j \} \right) \cong \Z_2^m \oplus \bigoplus_{i,j} \Z_2 \left< x_{i,j} \right>, $$ 
where the notation indicates that $m$ is the number of points in the zero-dimensional component of the critical manifold, and $x_{i,j}$
is a critical point of the Morse function $f_j$ on the $j^{th}$ component with non-zero dimension.  We point out that the action of $x_{i,j}$ is
equal to 
\bean
\raf^{\mr{MB}}(x_{i,j}) &= \raf(x_{i,j})+f_j(x_{i,j}) \\
&=\eta_j+f_j(x_{i,j}),
\eea 
where $\eta_j$ is the common action on the $j^{th}$ component.

\nl \noindent 
In the present setting, this simplifies greatly.  By construction, for each integer $j$, $$\mr{Crit}_{(j-\ep,j+\ep)}(\raf) \cong \Si$$ 
We choose for each $j$ the same Morse function $\Map{f_j \equiv f}\Si\R$ such that $\|f\|_{L^\INF(\Si)}<\frac{\ep}{2}$.  Recall (cf. \RMK{Rmk_RFH+}) that defining $\RFH^+$ involves the choice of an increasing sequence of regular values which diverges to infinity.  To this end, choose $c_n$ satisfying $c_n \in (n-\ep,n-\frac{\ep}{2})$.  This yields the desired result: let $\nu'$ denote the number of critical points of the Morse function $f$.  Then 
\bean
a(n) &= \mr{rank}(\io_{n,\INF}:\RFH_*^{(-\ep,c_n)}(\Si,W,\myHt,\{ f_j \}) \to \RFH_*^+(\Si,W))    \\
&\leq \mr{dim}_{\Z_2} \left( \RFH_*^{(-\ep,c_n)} \left( \Si,W,\myHt,\{ f_j \} \right) \right) \\
&\leq \mr{dim}_{\Z_2} \left( \mr{RFC}_{\mr{MB},*}^{(-\ep,c_n)} \left( \Si,W,\myHt,\{ f_j \} \right) \right) \\
&=n(\nu' + \nu) \\
\RA \Ga^+ &= \LS_{n \to \INF} \frac{\log(a(n))}{\log n} \leq \LS_{n \to \INF} \frac{\log(n(\nu'+\nu))}{\log(n)} = 1,
\eea
 as required.
\end{proof}

\begin{Cor} Suppose that $\cpairex$ is a Liouville-fillable contact manifold, and that for some filling $W$, $\Ga^+(\Si,W)>1$.  Then $\cpairex$ is orderable. \end{Cor}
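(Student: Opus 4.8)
The plan is to obtain this as a direct contrapositive of the preceding Proposition, combined with the Eliashberg--Polterovich characterization of orderability. The argument would run as follows: suppose, toward a contradiction, that $\cpairex$ is \emph{not} orderable. By the foundational work of Eliashberg and Polterovich \cite{EP00}, non-orderability of a contact manifold is equivalent to the existence of a positive loop of contactomorphisms which is contractible in the identity component of the contactomorphism group; in particular, $\cpairex$ then admits \emph{some} positive loop $\vp \in \pls$.

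Next I would apply the preceding Proposition. Its hypotheses are met: $\cpairex$ is Liouville-fillable by assumption, and we have just produced a positive loop. The conclusion is that $\Ga^+(\Si,W') \leq 1$ for \emph{every} Liouville filling $W'$ of $\cpairex$. Specializing to the filling $W$ appearing in our hypothesis gives $\Ga^+(\Si,W) \leq 1$, contradicting $\Ga^+(\Si,W) > 1$. Therefore $\cpairex$ is orderable.

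No genuine obstacle arises here: all the analytic work --- achieving transversality of $\raf$, passing from Morse--Bott to Morse via cascades, and bounding $a(n)$ by the linearly growing rank of the chain complex --- has already been carried out in \PROP{Prop_admissible} and in the proof of the preceding Proposition. The only points needing attention are purely logical: first, that the universal (``for any filling'') quantifier in the Proposition is exactly what lets us feed in the specific filling $W$ from the hypothesis; and second, that ``orderable'' is understood in the sense of \cite{EP00}, i.e. that $\leq$ descends to a genuine partial order on the universal cover of the identity component of $\mr{Cont}\cpairex$ --- a property which the cited work shows is equivalent to the nonexistence of a \emph{contractible} positive loop, and hence is implied a fortiori by the stronger conclusion $\pls = \varnothing$ that the Proposition in fact yields.
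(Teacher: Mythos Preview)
Your argument is correct and follows essentially the same route as the paper: assume non-orderability, extract a positive loop via \cite{EP00}, and apply the preceding Proposition to the given filling $W$ to force $\Ga^+(\Si,W)\leq 1$, a contradiction. The paper's proof is terser but identical in substance, noting only that the chain complex (and hence the bound $\Ga^+\leq 1$) is independent of the filling.
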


\begin{proof} We exploit the fact that the chain complex underlying $\RFH$ is independent of the filling.  If $\cpair$ is non-orderable, it admits a positive loop. $\RFH(\Si,W)$ can be computed as above  using the associated lifted contact Hamiltonian.  Hence $\Ga^+(\Si,W) \leq 1$, a contradiction. \end{proof}

\section{Proof of the Main Result}\label{section_proofofMthm}
We now turn to one of the questions raised in the introduction.  If $\cpair$ is non-orderable, is the same true for all co-orientable contact structures on $\Si$?  We answer in the negative.  The following is \THM{Thm_Mthm} from the introduction.

\begin{Thm}
Let $\cpair$ be a Liouville-fillable contact manifold with $\Dim\Si$ at least $7$.  Then there exists a Liouville-fillable
contact structure $\xi'$ on $\Si$, agreeing with $\xi$ on the complement of a Darboux ball, which admits no positive loop of contactomorphisms.  In particular, it admits no contractible positive loop of contactomorphisms, thus $\xi'$ is orderable.
\end{Thm}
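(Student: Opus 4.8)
The plan is to reduce the Theorem to the two Propositions stated in the introduction. By the Corollary (or directly the second Proposition applied with the help of the first), it suffices to produce, starting from the given Liouville filling $(W,\om=\d\la)$ of $\cpair$, a new Liouville domain $(W',\om'=\d\la')$ which is diffeomorphic to $W$, whose induced contact structure $\xi'$ on $\Si'=\partial W'$ agrees with $\xi$ away from a Darboux ball, and for which $\Ga^+(\Si',W')\geq 2$. Once this is in hand, the first Proposition forbids a positive loop of contactomorphisms on $(\Si',\xi')$ (since $\Ga^+>1$), and a contractible positive loop would in particular be a positive loop, so $\xi'$ is orderable. So the entire content is the construction of $W'$ with superlinear — in fact at least quadratic — positive growth rate of Rabinowitz Floer homology, localized to a Darboux ball.

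The construction I would carry out is handle attachment. Inside a Darboux ball in $W$ one can perform an ambient subcritical/critical Weinstein-type surgery, or more precisely attach handles in a collar of $\Si$, so as to build into the filling a subdomain whose symplectic homology — equivalently, via the long exact sequence of \PROP{Prop_CFO_1.4}, whose $\RFH^+$ — has at least polynomial growth of degree $2$ in the action. The natural model is to arrange the new filling to contain (a neighborhood of) a Lagrangian or a configuration forcing many short Reeb chords/orbits whose actions accumulate at a controlled rate; McLean's computations of growth rates of symplectic homology under handle attachment (the framework of \cite{McLean_Computability}, which the paper already invokes to define $\Ga^+$) give the growth estimate. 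The dimension hypothesis $\Dim\Si\geq 7$ is exactly what is needed to have enough room to perform the relevant handle attachments inside a Darboux ball while keeping the surgery Liouville and keeping the diffeomorphism type of $W$ unchanged (a handle and its cancelling partner, or an index argument that only bites below dimension $7$). I would then verify: (i) $W'\cong W$ as smooth manifolds; (ii) $\la'=\la$ outside the ball, so $\al'=\al$ and $\xi'=\xi$ there; (iii) the growth computation $\Ga^+(\Si',W')\geq 2$, using \PROP{Prop_CFO_1.4} to transfer the symplectic homology growth lower bound to $\RFH^+$, and noting that a lower bound for $\dim V_n$ together with eventual stabilization of the image of $\io_{n,\infty}$ (or a direct argument that the relevant classes survive to the limit) gives the lower bound for $a(n)$.

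The main obstacle is step (iii): a \emph{lower} bound on $a(n)=\Dim\,\mr{Im}(\io_{n,\infty})$, not merely on $\dim V_n$. Growth of the filtered groups does not automatically survive to the direct limit — classes could die. The way I would handle this is to choose the handle attachment so that the generators responsible for the quadratic growth are homologically essential in $\RFH^+(\Si',W')$ itself, e.g. by arranging that they correspond, under the long exact sequence of \PROP{Prop_CFO_1.4}, to classes in $\SH_*(M')$ (or in the cohomology of the pair) that are manifestly nonzero and detected by the action filtration — this is precisely the situation McLean engineers, where the growth of $\SH$ is witnessed by classes of controlled action that persist. A secondary technical point is checking that the surgery can genuinely be confined to a Darboux ball and still yield a \emph{Liouville} (not merely symplectic) filling with complete Liouville vector field transverse to the boundary; here I would use the standard fact, already cited in the paper after the definition of Liouville-fillable, that Liouville structures behave well under the relevant local modifications, together with an explicit model of the handle inside the ball.
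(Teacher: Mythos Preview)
Your reduction is exactly the paper's: produce a new Liouville filling $W'$, diffeomorphic to $W$ and agreeing with it outside a Darboux ball, with $\Ga^+(\Si',W')\geq 2$, and then invoke the growth-rate Proposition. You also correctly anticipate that the transfer from $\SH$ to $\RFH^+$ goes through the long exact sequence of \PROP{Prop_CFO_1.4}, and that the real issue is a lower bound on $a(n)$ rather than on $\dim V_n$.

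The gap is that you never actually construct anything. ``Attach handles inside a Darboux ball so as to create a Lagrangian or a configuration forcing many short Reeb orbits'' is a description of desiderata, not a construction, and ``a handle and its cancelling partner'' does not by itself produce superlinear growth of $\SH$. The paper's argument supplies the missing concrete object: it builds, independently of $W$, an exotic Liouville ball $B$ of the right dimension with $\Ga^+(\partial B,B)\geq 2$, and then takes the end-connect sum $W\,\#_e\,B$ via a Weinstein $1$-handle. The ball is built from Seidel's contractible $4$-dimensional Liouville domain $A$ with $\SH_*(A)\neq 0$; one repeatedly takes the product with $DT^*S^1$ and kills the new $\pi_1$ by subcritical Weinstein $2$-handles. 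Oancea's K\"unneth formula plus Cieliebak's invariance under subcritical surgery give $\SH_*(B)\cong \SH_*(A)\otimes \SH_*(DT^*T^k)$, and the h-cobordism theorem identifies $B$ with a ball once $\dim B\geq 8$ (this is where $\dim\Si\geq 7$ enters, not merely ``room for handles''). The quadratic lower bound on $a(n)$ then comes from an explicit count: the filtered $\RFH$ of $ST^*T^2$ is governed by the number of lattice points in a disk of radius $k$, i.e.\ the Gauss circle problem, and these classes persist because $\SH_*(A)\neq 0$ tensors nontrivially with them. Your worry about classes dying in the direct limit is thus resolved not by an abstract stabilization argument but by an honest computation in a model where every class is visible and survives. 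Without this model, your step~(iii) remains a hope rather than a proof.
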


\begin{proof}
The main idea is as follows.  We use Weinstein zero-surgery to form the end-connect sum of $W$ with a
nonstandard ball $B$.  According to Cieliebak \cite{Cieliebak02}, symplectic homology does not change under subcritical
surgery, so
\[
\SH_*\left(W \, \#_e \, B \right) \cong \SH_*(W) \oplus \SH_*(B).
\]
The strategy becomes then to construct $B$ such
that $\Ga^+(B) \geq 2$.  We learned of this line of attack from \cite{McLean_Computability}.

\nl\noindent
According to Seidel \cite{Seidel_Biased}, there exists a contractible 4-dimensional Liouville domain $A$ with
nonvanishing symplectic homology.  We take the cartesian product of $A$ with $DT^*S^1$.  The
result (after smoothing), will be denoted $W_1$, and has a natural Liouville structure.  By work of Oancea
\cite{Oancea_Kunneth},
$$\SH_*(W_1) \cong \SH_*(A) \otimes \SH_*(DT^*S^1).$$
Attach a Weinstein two-handle along a trivially framed isotropic circle in the boundary representing a
generator of $\pi_1$ and call the resulting Weinstein domain $W_2$.  By \cite{Cieliebak02}, 
\begin{equation}
\SH_*(W_2) \cong \SH_*(W_1). \quad \footnote{We point out a slight subtlety here.  When computing the symplectic homology of a manifold with non-contractible orbits, the grading depends on a choice of trivialization for each free homotopy class.  After a Weinstein surgery which kills the homotopy class of an orbit, the resulting trivialization must extend over a spanning disk.  Hence, the above isomorphism should be taken to mean that there exists a choice of grading on $\SH_*(W_1)$ which renders the above a true statement.  For our purposes, this distinction is inconsequential.}
\end{equation}
Now repeat the above, taking $W_3 := W_2 \X DT^*S^1$ and performing Weinstein surgery to obtain $W_4$.  By the h-cobordism theorem, $W_4$ is diffeomorphic to a ball, and $\partial W_4$ is diffeomorphic to a sphere.  Further, 
$$\SH_*(W_4) \cong \SH_*(A) \otimes \SH_*(DT^*T^2).$$
Finally, return to the original pair $(\Si,W)$.  Attach a Weinstein 1-handle to $W \sqcup W_4$.  The resulting pair $(\Si',W_5)$ is diffeomorphic to $(\Si,W)$, with a Liouville structure that agrees with the original one on $(\Si,W)$ on the complement of the ball removed.  

\nl\noindent
To conclude the proof, observe that $\Ga^+(\Si',W_5) \geq 2$. Indeed, $\SH_*(DT^*T^2)$ is isomorphic to the homology of the free
loop space of the torus, which can be computed using the geodesic energy functional.  Alternatively, a direct computation goes as follows: since $\pi_1(T^2)$ is Abelian, $\RFH_*(ST^*T^2,DT^*T^2)$ is indexed by elements of $\pi_1(T^2)$.  Choose the quotient metric from $\R^2/(\Z\oplus \Z)$ so that each of the generators $a,b$ of $\pi_1(T^2)$ has a geodesic representative with minimal length one.  By the choice of metric, the critical manifold associated to the homotopy class
$$\left[ a^i \, b^j \right], \quad (i,j) \in \Z\X\Z$$
is a copy of the torus itself, and hence for each $k\in\Z$,
\bean
\Dim{\RFH_*^{(-\ep,k+\ep)}\left( ST^*T^2,DT^*T^2 \right) } &= 4 \left| \Set{i,j}\Z{i^2+j^2\leq k^2} \right| \\
&= 4N(k) \\
&= 4(\pi k^2 +E(k)), 
\eea
and $E(k)$ satisfies 
\begin{equation}\label{eq_Gaussest}
|E(k)|\leq 2\sqrt{2}\pi k .
\end{equation} 
Here $N(k)$ denotes the number of integer lattice points inside a closed disk of radius $k$ in $\R^2$, $E(k)$ is an error term, and the estimate in \eqref{eq_Gaussest} is due to Gauss.  Taking the log-limit shows that 
$$\Gamma^+(ST^*T^2,DT^*T^2) = 2,$$
as expected.

\nl\noindent
If $\Si$ has dimension greater than $7$, we iterate the procedure above as many times as needed, i.e. the operation of taking the Cartesian product with $DT^*S^1$ and using Weinstein surgery to kill the associated nontriviality in $\pi_1$ gives a well defined map of sets 
\begin{align*}
&\left\{ \text{Liouville domains of dimension } 2n+1\right\} &&\longrightarrow &&&\left\{\text{Liouville domains of dimension } 2n+3 \right\}
\end{align*}
which is monotone increasing with respect to $\Ga^+.$  By this we can obtain a Liouville filling for $S^{2n+1}$ with $\Ga^+(S^{2n+1},B)\geq 2$ for any $n \geq 3$ and thus on any Liouville-fillable contact manifold of the same dimension.
\end{proof}

\appendix

\section{Transversality and the geometric multiplicity of symplectic eigenvalues}\label{section_TransReview}

\subsection{A review of topological transversality}

Here we recall the basic results concerning finite-dimensional transversality.

\begin{Def} 
Let $M$ and $N$ be smooth manifolds, $A \subset N$ a smooth submanifold, and suppose $\Map{F}{M}{N}$ is a smooth map.  We say that $F$ is \emph{transverse} to $A$ at $x \in F^{-1}(A)$ if
$$\mr{Im} \left(\mr{D}F_x:T_x M \to T_{F(x)}N \right) \, + \, T_{F(x)}A=T_{F(x)}N.$$
If $F$ is transverse to $A$ at $x$ for all $x \in F^{-1}(A)$, we say that $F$ is transverse to $A$ and write $F \pitchfork A.$
\end{Def}

\begin{Thm}[Inverse Function Theorem] 
If $F \pitchfork A$, then $F^{-1}(A)$ is a smooth submanifold of $M$ with dimension 
\beq 
\Dim{F^{-1}(A)}=\Dim{M}+\Dim{A}-\Dim{N},
\eeq 
where a negative dimension is taken to mean that $F$ does not intersect $A$.  In general, for any $\Map{F}{M}{N}$, any submanifold $A$ and any point $x \in F^{-1}(A)$, 
\beq\label{eq_dimcount}
\Dim{ \Set{v}{T_x M}{\DF{F}{v} \in T_{F(x)}A}} \geq \Dim{M}+\Dim{A} - \Dim{N},
\eeq 
with equality holding if and only if $F$ is transverse to $A$ at $x$.
\end{Thm}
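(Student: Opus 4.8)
The plan is to reduce the statement to the classical submersion theorem by passing to a submanifold chart for $A$. First I would fix $x \in F^{-1}(A)$, write $n = \Dim{N}$ and $k = \Dim{A}$, and choose coordinates on $N$ near $F(x)$ in which $A$ is the coordinate slice $\{ y_{k+1} = \cdots = y_n = 0 \}$. Let $\pi \colon N \to \R^{n-k}$ be, in these coordinates, the projection onto the last $n-k$ components; then $\pi$ is a submersion with $\pi^{-1}(0) = A$ locally, and $\ker \mr{D}\pi_{F(x)} = T_{F(x)} A$. Setting $g := \pi \circ F$ on a neighborhood of $x$, we have $F^{-1}(A) = g^{-1}(0)$ near $x$.

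The key observation is that $F$ is transverse to $A$ at $x$ if and only if $\mr{D}g_x = \mr{D}\pi_{F(x)} \circ \mr{D}F_x$ is surjective. Indeed, $\mr{D}\pi_{F(x)}$ descends to an isomorphism $T_{F(x)}N / T_{F(x)}A \cong \R^{n-k}$, so surjectivity of the composite says exactly that $\mr{Im}(\mr{D}F_x)$ maps onto $T_{F(x)}N / T_{F(x)}A$, which is the condition $\mr{Im}(\mr{D}F_x) + T_{F(x)}A = T_{F(x)}N$ from the definition of transversality. Granting this, the first assertion follows at once from the submersion theorem (itself an application of the implicit function theorem): if $\mr{D}g_x$ is surjective at every point of $g^{-1}(0)$, then $g^{-1}(0)$ is an embedded submanifold of dimension $\Dim{M} - (n-k) = \Dim{M} + \Dim{A} - \Dim{N}$. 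Since transversality is assumed at every point of $F^{-1}(A)$ and being an embedded submanifold of a fixed dimension is a local property, patching the charts gives the global statement; the convention that a negative value of this dimension means $F^{-1}(A) = \varnothing$ is consistent, as surjectivity of $\mr{D}g_x$ already forces $\Dim{M} \geq n-k$.

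For the dimension count in \eqref{eq_dimcount}, I would simply note that $\Set{v}{T_x M}{\mr{D}F_x(v) \in T_{F(x)}A} = \ker(\mr{D}\pi_{F(x)} \circ \mr{D}F_x) = \ker \mr{D}g_x$, so by rank--nullity its dimension equals $\Dim{M} - \mr{rank}(\mr{D}g_x)$. Since $g$ takes values in a space of dimension $n-k = \Dim{N} - \Dim{A}$, we have $\mr{rank}(\mr{D}g_x) \leq \Dim{N} - \Dim{A}$, and therefore $\Dim{\ker \mr{D}g_x} \geq \Dim{M} + \Dim{A} - \Dim{N}$. Equality holds precisely when $\mr{D}g_x$ attains the full rank $n-k$, which by the key observation above is precisely the condition that $F$ be transverse to $A$ at $x$.

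I expect no serious obstacle here: the result is a repackaging of the implicit function theorem. The only point that warrants a word of care is independence of the chosen submanifold chart, hence of $\pi$. But both $F^{-1}(A)$ and the subspace $\Set{v}{T_x M}{\mr{D}F_x(v) \in T_{F(x)}A}$ are defined intrinsically, and $\ker \mr{D}\pi_{F(x)} = T_{F(x)}A$ for any such chart, so the auxiliary choice drops out of every conclusion.
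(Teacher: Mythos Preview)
Your argument is correct and is the standard reduction to the submersion theorem via a slice chart for $A$; the rank--nullity step for \eqref{eq_dimcount} is exactly right. The paper itself gives no proof of this theorem --- it is quoted as a standard result in the appendix, with no argument supplied --- so there is nothing to compare against, and your write-up would serve as a complete proof if one were wanted.
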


\begin{Thm}[Transversality Theorem] 
Let $\Map{F}{M}{N}$ be a smooth map.  Then given a submanifold $A \subset N$ and a generic $F$, $F \pitchfork A.$ 
\end{Thm}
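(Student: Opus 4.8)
The plan is to prove the classical Thom transversality theorem by the standard two-step route: first establish the \emph{parametric transversality theorem} as a consequence of Sard's theorem, and then exhibit a finite-dimensional family of perturbations of $F$ to which it applies.

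First I would prove the parametric statement: if $\Map{\Phi}{M \times S}{N}$ is smooth and $\Phi \pitchfork A$, then $\Phi_s := \Phi(\cdot,s)$ is transverse to $A$ for almost every $s \in S$. The argument is that $W := \Phi^{-1}(A)$ is a smooth submanifold of $M \times S$ (by the inverse function theorem quoted above), and one considers the projection $\pi \colon W \to S$. By Sard's theorem almost every $s \in S$ is a regular value of $\pi$. A short linear-algebra verification --- given $x$ with $\Phi_s(x) \in A$, so that $(x,s) \in W$, one combines surjectivity of $\d\pi$ at $(x,s)$ with the transversality of $\Phi$ at $(x,s)$ --- shows that the regular values of $\pi$ are precisely the $s$ for which $\Phi_s \pitchfork A$.

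Next I would construct the family. Embed $N$ as a closed submanifold of some $\mathbb{R}^k$ (Whitney embedding) and fix a tubular neighborhood $U \supset N$ with a smooth retraction $\Map{\rho}{U}{N}$. For $s$ in a sufficiently small ball $S \subset \mathbb{R}^k$ set $\Phi(x,s) := \rho\bigl(F(x) + s\bigr)$, which is well-defined and smooth. The partial derivative $\partial\Phi/\partial s$ at $s = 0$ equals $\d\rho_{F(x)} \colon \mathbb{R}^k \to T_{F(x)}N$, which is surjective; hence $\Phi$ is a submersion and so automatically $\Phi \pitchfork A$. The parametric theorem then yields $\Phi_s \pitchfork A$ for almost every small $s$, and since $\Phi_0 = F$ and $\Phi_s \to F$ in the $C^\infty$ topology as $s \to 0$, maps transverse to $A$ are dense near $F$; when $A$ is closed (or $M$ compact) they also form an open set, so $F \pitchfork A$ holds generically.

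The main obstacle is the passage to noncompact $M$: making ``generic'' precise then requires the Whitney $C^\infty$ topology together with a Baire-category argument. One covers $M$ by a locally finite family of compact sets, modifies $\Phi$ by cutoff functions so that it perturbs $F$ only near one such set at a time, obtains for each set an open dense family of maps transverse to $A$ along that set, and intersects the resulting residual collection. The only genuinely delicate ingredient in the compact case is the linear-algebra lemma identifying regular values of $\pi|_W$ with transverse slices $\Phi_s$; the rest is bookkeeping with tubular neighborhoods and Sard's theorem.
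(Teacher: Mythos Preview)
Your argument is the standard and correct proof of the Thom transversality theorem; the parametric step via Sard, the Whitney-embedding/tubular-neighborhood family, and the Baire refinement for noncompact $M$ are all sound. However, the paper does not actually prove this statement: it appears in an appendix subsection titled ``A review of topological transversality,'' where it is simply quoted as a classical result alongside the inverse function theorem, with no proof given. So there is no approach in the paper to compare yours against --- you have supplied a proof where the paper merely cites the theorem.
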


\begin{Def}\label{Def_Lefschetz}
For a real vector space $V$, a linear map $\mathop{T} \in \mr{End}(V)$, and $a \in \R$, define the \emph{$a$-eigenspace} of $\mathop{T}$ by 
\beqn
E_a(T):=\Set{v}{V}{\mathop{T} v = av}
\eeq
Given a smooth manifold $M$, $F \in \mr{Diff}(M)$ and $x \in \mr{Fix}(F),$ 
\beqn
E_a(F,x):= E_a(\mr{D}F_x), \quad i_L(F):=\max_{x \in \mr{Fix}(F)}\Dim{E_1(F,x)}.
\eeq
We call $i_L(F)$ the \emph{Lefschetz degeneracy index} of $F$.  $F$ is \emph{Lefschetz regular} if $i_L(F)=0$ and \emph{Lefschetz degenerate} otherwise.
\end{Def}

\begin{Rmk}
In \DEF{Def_Lefschetz}, $E_a(\mathop{T})$ is defined for all $a \in \R,$ although it is trivial if $a$ is not an eigenvalue.  Our definition of Lefschetz regularity coincides with that used in the Lefschetz fixed point theorem: $F$ is transverse to the diagonal if and only if $i_L(F)=0.$
\end{Rmk}

\begin{Def}\label{Def_dualform} 
Assume that $F\in \mr{Diff}(M)$ is isotopic to the identity, i.e. there exists a smooth map 
$\Map{f}{\R \X M}{M}$ such that
\begin{itemize}
\item For each $t \in \R,$ $f_t \in \mr{Diff}(M)$
\item $f_0$ is the identity map of $M$
\item $f_1=F$.
\end{itemize}
To analyze $f_t$ we examine 
\begin{flalign*}
&\Map{X_t}{[0,1]}{\Ga(TM)}, \quad \mbox{the generating vector field} \\
&\Map{\BAR{F}}{M}{M \X M}, \quad x\mapsto(x,F(x)), \quad\mbox{the graph of $F$}\\
&\Map{\BAR{f}}{[0,1] \X M}{M \X M},  \quad (t,x) \mapsto (x,f_t(x)),\quad \mbox{the parametrized graph of the isotopy}\\
&\Delta_M \subset M\X M, \quad \mbox{the diagonal in $M \X M$}.
\end{flalign*} 
For the remainder of the section, we assume that $X_1 \in \Ga(TM)$ is nonvanishing and fix $\al \in \Om^1(M)$ to be any one form satisfying $\io_{X_1}\al>0.$
\end{Def}

\begin{Lemma}\label{Lem_mandatorydegeneracy}
Suppose $F$, $f_t$, $X_1$ and $\al$ are as in \DEF{Def_dualform}.  Given $x \in \mr{Fix}(F)$, the condition $(F^*\al)(x)=\al(x)$ implies that $\BAR{F}$ is not transverse to the diagonal at $x$.  Moreover, $i_L(F,x)=1$ if and only if $\BAR{f}$ is transverse to the diagonal at $(1,x).$
\end{Lemma}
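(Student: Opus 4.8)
The plan is to reduce both statements to the elementary dimension count \eqref{eq_dimcount}, applied to the maps $\BAR F$ and $\BAR f$ from \DEF{Def_dualform} with target submanifold the diagonal $\Delta_M \subset M \X M$; the only genuinely new ingredient is a one-line linear-algebra observation, which I would isolate first. Since $X_1$ is nonvanishing and $\io_{X_1}\al > 0$, we have $\al_x(X_1(x)) > 0$ (so in particular $\al_x \neq 0$); if $(F^*\al)_x = \al_x$ and $X_1(x) = (I - \mr{D}F_x)w$ for some $w \in T_xM$, then pairing with $\al_x$ gives $\al_x(X_1(x)) = \al_x(w) - (F^*\al)_x(w) = 0$, a contradiction. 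Hence $(F^*\al)_x = \al_x$ implies $X_1(x) \notin \mr{Im}(I - \mr{D}F_x)$, and in particular $\mr{Im}(I - \mr{D}F_x) \subsetneq T_xM$, so $E_1(F,x) = \ker(I - \mr{D}F_x) \neq 0$ and $i_L(F,x) \geq 1$.

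For the first assertion I would note that $\mr{D}\BAR F_x(v) = (v, \mr{D}F_x v)$, so $\{\,v \in T_xM : \mr{D}\BAR F_x(v) \in T_{(x,x)}\Delta_M\,\} = E_1(F,x)$, while the right-hand side of \eqref{eq_dimcount} for $\BAR F \colon M \to M \X M$ with $A = \Delta_M$ equals $\Dim M + \Dim M - 2\Dim M = 0$. Thus $\BAR F$ is transverse to $\Delta_M$ at $x$ precisely when $E_1(F,x) = 0$, which fails under the discriminant hypothesis by the previous paragraph.

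For the ``moreover'' (which I read as being under the same hypothesis $(F^*\al)_x = \al_x$, since otherwise $i_L(F,x) = 0$ would already make $\BAR f$ transverse) I would compute the differential of $\BAR f \colon [0,1] \X M \to M \X M$ at $(1,x)$. Using that the generating vector field satisfies $\partial_t f_t = X_t \circ f_t$ and that $f_1(x) = F(x) = x$, one gets $\mr{D}\BAR f_{(1,x)}(s,v) = (v,\ s\,X_1(x) + \mr{D}F_x v)$ for $(s,v) \in \R \X T_xM$. Hence $(s,v)$ is carried into $T_{(x,x)}\Delta_M$ exactly when $(I - \mr{D}F_x)v = s\,X_1(x)$; denote this subspace by $K$. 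Since $X_1(x) \notin \mr{Im}(I - \mr{D}F_x)$, the equation is solvable only for $s = 0$, with solution set $E_1(F,x)$, so $K = \{0\} \X E_1(F,x)$ and $\Dim K = \Dim E_1(F,x) = i_L(F,x)$. The right-hand side of \eqref{eq_dimcount} is now $(1 + \Dim M) + \Dim M - 2\Dim M = 1$, so $\BAR f$ is transverse to $\Delta_M$ at $(1,x)$ if and only if $\Dim K = 1$, i.e. if and only if $i_L(F,x) = 1$.

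I expect the only delicate point to be getting the differential $\mr{D}\BAR f_{(1,x)}$ right — in particular that the $t$-derivative contributes $X_1(F(x)) = X_1(x)$ rather than $X_1$ evaluated elsewhere — and, relatedly, being careful that transversality of $\BAR f$ at the boundary point $(1,x)$ of $[0,1] \X M$ is read off from the full tangent space $\R \X T_xM$. Given that, everything else is bookkeeping with \eqref{eq_dimcount}.
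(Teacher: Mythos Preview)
Your argument is correct and follows essentially the same route as the paper's own proof: compute $\mr{D}\BAR f_{(1,x)}(s,v)=(v,\,sX_1(x)+\mr{D}F_x v)$, pair the diagonal condition with $\al$ using $(F^*\al)_x=\al_x$ to force $s=0$, and then read off both assertions from the dimension count \eqref{eq_dimcount}. The only cosmetic difference is that you isolate the statement $X_1(x)\notin\mr{Im}(I-\mr{D}F_x)$ up front and invoke it twice, whereas the paper derives $a=0$ in situ; the content is identical.
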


\begin{proof} 
Examine the map $\BAR{f}.$  By \eqref{eq_dimcount}, for any fixed point $\In{x}{\BAR{F}^{-1}(\Delta_M)}$, there exists a nonzero tangent vector
\beqn
a\p{t}+v \in T_{(1,x)} (\R \X M) \cong \R \oplus T_xM \quad \mbox{with }\DF{\BAR{f}}{(a \p{t}+v)} \in T_{(x,x)} \Delta_M.
\eeq
The Jacobian is found to be
$$\DF{\BAR{f}}{(a \p{t}+v)}=(v,a \, X_1 + \DF{F}{v}).$$
Applying $\al$ to both sides of
\begin{align*}
v &= a \, X_1 + F_* v \\
\intertext{yields}
\dfm{\al}{v} &= a \dfm{\al}{X_1} + \dfm{\al}{F_*v} \\
&= a \dfm{\al}{X_1} + \dfm{(\pb{F}{\al})}{v} \\
&= a \dfm{\al}{X_1} + \dfm{\al}{v}  \\
&\RA a=0, 
\end{align*}
and hence a nonzero vector in $T_xM$ is mapped to the diagonal, i.e. $i_1(F,x)\geq 1.$  By \eqref{eq_dimcount}, $F$ is not transverse to the diagonal.  Moreover, the same equation shows that $i_1(F,x)=1$ if and only if $\BAR{f}$ is transverse to the diagonal at $(1,x).$
\end{proof}

\subsection{Geometric multiplicity of symplectic matrices and symplectomorphisms}\label{section_GenericSymp}
We adapt the previous results to symplectic matrices and symplectomorphisms.
\begin{Def}
Denote by $\sg$ the set of symplectic matrices.  Define subsets 
\bea
\mc{V}_{1,k}&:=\Set{A}\sg{\Dim{E_1(A)}=k}, \quad 0\leq k\leq 2n. \\
\mc{U}_{1,k}&:=\Set{A}\sg{\Dim{E_1(A)} \geq k}, \quad 0\leq k\leq 2n. 
\eea
\end{Def}

\begin{Prop}\label{Prop_GenericSympMat}
$\mc{V}_{1,k}$ is open and dense in $\mc{U}_{1,k}$ for all $0 \leq k \leq n.$  In particular,
\begin{itemize} 
\item The set of all symplectic matrices $A$ such that $1 \notin \si(A)$ is dense in $\sg$
\item $\Set{A}\sg{\Dim{E_1(A)}=1}$ is open and dense in $\Set{A}\sg{1\in\si(A)}.$
\end{itemize}
\end{Prop}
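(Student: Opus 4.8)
The plan is to handle openness and density separately. Openness is immediate from semicontinuity: since $\Dim E_1(A) = 2n - \mathrm{rank}(A-I)$ and matrix rank is lower semicontinuous, the function $A \mapsto \Dim E_1(A)$ is upper semicontinuous on $\sg$. Hence $\{A \in \sg : \Dim E_1(A) \leq k\}$ is open in $\sg$, and $\mc{V}_{1,k} = \{A : \Dim E_1(A) \leq k\} \cap \mc{U}_{1,k}$ is open in $\mc{U}_{1,k}$.

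For density I would argue that every $A \in \mc{U}_{1,k}$ with $\Dim E_1(A) = m > k$ is a limit, in $\sg$, of matrices with $\Dim E_1 = m-1$; iterating this $m-k$ times produces a sequence in $\mc{V}_{1,k}$ converging to $A$. To perturb $A$, first note that the real generalized eigenspace $V := V_1(A)$ of the eigenvalue $1$ is a symplectic, $A$-invariant subspace — its $\omega$-orthogonal complement is the sum of the remaining generalized eigenspaces, on which $A-I$ is invertible — and $U := A|_V$ is unipotent in $\mathrm{Sp}(V)$. Any $U'$ near $U$ in $\mathrm{Sp}(V)$ extends by $A|_{V^\omega}$ (left unchanged) to a symplectic matrix $A' = U' \oplus A|_{V^\omega}$ near $A$ with $\Dim E_1(A') = \Dim E_1(U')$, since $A|_{V^\omega}$ contributes no $1$-eigenvector. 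So the problem reduces to perturbing a unipotent $U \in \mathrm{Sp}(V)$ so as to drop $\Dim\ker(\cdot - I)$ by exactly one, which here is the number of Jordan blocks.

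This I would do by an explicit elementary move on a symplectic Jordan basis, working in the exponential chart $U' = U\exp(JS)$ with $S$ symmetric. If $U$ has a Jordan block of even size $\geq 2$, a small $S$ supported on that block replaces it by a block with no $1$-eigenvector (the $2\times 2$ model being $\bigl(\begin{smallmatrix}1&1\\0&1\end{smallmatrix}\bigr)\rightsquigarrow\bigl(\begin{smallmatrix}1&1\\ \varepsilon & 1+\varepsilon\end{smallmatrix}\bigr)$), lowering the block count by one; if every Jordan block has odd size — so they occur in equal pairs $(\ell,\ell)$ by the symplectic parity constraint on Jordan types — a small $S$ on one such pair merges it into a single block of size $2\ell$, again lowering the count by one; and if $U$ is a single block (so $\Dim E_1(U)=1$ and only the descent to $k=0$ remains) a small $S$ makes all its eigenvalues distinct and off $1$. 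Each move is an arbitrarily small symplectic perturbation changing $\Dim E_1$ by exactly one, so iteration reaches $\Dim E_1 = k$.

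The two displayed consequences are the special cases $k=0$, where $\mc{U}_{1,0}=\sg$ and $\mc{V}_{1,0}=\{A:1\notin\sigma(A)\}$, and $k=1$, where $\mc{U}_{1,1}=\{A:1\in\sigma(A)\}$. The step I expect to require real care is the last one: verifying that each elementary move is genuinely realizable inside $\mathrm{Sp}(V)$, respects the parity constraints on symplectic Jordan types, and changes $\Dim E_1$ by precisely one — in effect, tracking Jordan structure under the symplectic exponential map. I expect this to be routine but somewhat lengthy linear algebra, and it is the part I would write out in full.
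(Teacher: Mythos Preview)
Your argument is correct, but the paper takes a much shorter route. Rather than reducing to the unipotent part and manipulating symplectic Jordan blocks, the paper works in the exponential chart: near the identity every symplectic matrix is $\exp(JS)$ with $S$ symmetric, and for $\|S\|$ small one has the clean identity
\[
E_1\bigl(\exp(JS)\bigr)=\ker S,
\]
since $\exp(JS)-I=(I+B)JS$ with $I+B$ invertible. This collapses the whole question to the nullity stratification of \emph{symmetric} matrices, where diagonalizability makes both openness and density of $\{\operatorname{null} S = k\}$ in $\{\operatorname{null} S \ge k\}$ immediate. The passage to a general $A$ is then handled by perturbing along $A\exp(JS)$.

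Your route---splitting off the symplectic generalized $1$-eigenspace, reducing to a unipotent $U\in\mathrm{Sp}(V)$, and lowering the Jordan block count one step at a time---is sound and more explicitly constructive, and your invocation of the parity constraint (odd-size blocks for eigenvalue $1$ occur in pairs) is correct. The cost is exactly what you flagged: the elementary moves on symplectic Jordan bases are routine but require genuine bookkeeping, whereas the paper's $E_1(\exp(JS))=\ker S$ trick sidesteps all of that in one line. If you want the shortest proof, it is worth internalizing that identity; if you want a proof that tells you \emph{how} to perturb, yours does that and the paper's does not.
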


\begin{proof} 
We first examine a neighborhood of the identity in $\sg.$  Recall that the Lie algebra of $\sg$ may be identified with 
\beq
\mathfrak{g}_{\sg} = \Set{JS}{\mc{M}_{2n}(\R)}{S \in \mr{Sym}_{2n}(\R)}.
\eeq
Any symplectic matrix close to the identity may be written as $\exp(JS).$  We first claim that 
\beq
\exp(JS)v=v \LRA Sv=0.
\eeq
This may be seen as follows.  If $B$ is any matrix satisfying $\| B \|<1,$ $(I+B)$ is invertible.  Hence
\bea
\exp(JS)v=v & \LRA \sum_{k=1}^\INF \frac{(JS)^k}{k\!} v =0 \\
&\LRA (I+B)JSv =0,
\eea
where $B$ is determined by $S$.  Hence there exists $\ep>0$ such that $\|S\|<\ep$ implies that $I+B$ is invertible.  Given this universal bound, we have
\beq
E_1(\exp(JS))= \Ker{(1+B)JS} = \Ker{S}.
\eeq
Using the fact that symmetric matrices are diagonalizable, the proof is now straightforward.  To wit, the number of entries equal to zero in a diagonal matrix is evidently upper semi-continuous, which implies that the set of symmetric matrices with nullity $k$ is open and dense in the set of symmetric matrices with nullity greater than or equal to $k+j$, for any $k,$ $j\geq0.$

To apply this reasoning to all of $\sg,$ we look at matrices of the form $A \exp(JS).$  We omit the details of the extension, which are straightforward.
\end{proof}

\begin{Cor} 
A generic \emph{path} $A_t$ of symplectic matrices satisfies $\Dim{E_1(A_t)}\leq 1$ for all $t$.
\end{Cor}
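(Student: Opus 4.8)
The plan is to realize the locus of symplectic matrices that must be avoided as a \emph{finite} union of submanifolds of codimension at least two, and then to conclude by a dimension count. The locus in question is $\mc{U}_{1,2}=\Set{A}{\sg}{\Dim{E_1(A)}\geq 2}$, which is closed since $A\mapsto\Dim{\ker(A-I)}$ is upper semicontinuous. If each stratum of $\mc{U}_{1,2}$ is a submanifold $W$ with $\mr{codim}\,W\geq 2$, then a generic path $\Map{A}{[0,1]}{\sg}$ --- i.e. one in a residual subset of $C^\INF([0,1],\sg)$ --- is transverse to each such $W$ (a finite intersection of residual sets is residual), whence $A^{-1}(W)=\varnothing$ because $\Dim{A^{-1}(W)}=1-\mr{codim}\,W<0$; so the generic path misses $\mc{U}_{1,2}$ entirely, which is exactly the assertion $\Dim{E_1(A_t)}\leq 1$ for all $t$.

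To produce the stratification I would write $\mc{U}_{1,2}=\bigcup_{k\geq 2}\mc{V}_{1,k}$, stratifying by the exact value $k=\Dim{E_1(A)}$. The proof of \PROP{Prop_GenericSympMat} supplies precisely the local model: near any $A_0\in\sg$, every nearby symplectic matrix is $A_0\exp(JS)$ with $S\in\mr{Sym}_{2n}(\R)$, and for $A_0=\mr{Id}$ the equality $E_1(\exp(JS))=\ker S$ identifies $\mc{V}_{1,k}$ near $\mr{Id}$ with the nullity-$k$ locus in $\mr{Sym}_{2n}(\R)$, a smooth submanifold of codimension $\binom{k+1}{2}$. Carrying this analysis to a general $A_0$ --- the ``straightforward'' extension in that proof --- gives that each $\mc{V}_{1,k}$ is a smooth submanifold of $\sg$ of codimension $\binom{k+1}{2}$; this is precisely the stratification of the Maslov cycle of $\sg$. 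Since $\binom{k+1}{2}\geq 3$ for every $k\geq 2$, all strata have codimension at least two, and the transversality theorem (applied stratum by stratum, or in stratified form to the closed real-algebraic set $\mc{U}_{1,2}$) completes the argument as above. Nothing changes if one works rel endpoints, or with loops instead of paths.

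I expect the main obstacle to be the codimension bound at a matrix $A_0\neq\mr{Id}$ that is not semisimple at the eigenvalue $1$, where the identity-model equality $E_1(\exp(JS))=\ker S$ does not apply verbatim. I would handle it with eigenvalue perturbation theory on the generalized $1$-eigenspace $G$ of $A_0$, which is an $A_0$-invariant symplectic subspace (the generalized eigenspace for the eigenvalue $1$ is self-paired by $\om$): if $\left.A_0\right|_G$ has a nontrivial Jordan block, a generic perturbation $A_0\exp(JS)$ pushes the corresponding eigenvalues off $1$ altogether and strictly decreases $\Dim{E_1}$, so the extremal case is $\left.A_0\right|_G=\mr{Id}_G$; there the first-order behaviour of $E_1(A_0\exp(JS))$ is governed by the nullity of the restriction of the quadratic form $S$ to $G$, which is again controlled by the symmetric-matrix count in the proof of \PROP{Prop_GenericSympMat}. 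One either carries out this case directly or appeals to the classical structure of the Maslov cycle in $\sg$.
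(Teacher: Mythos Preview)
The paper records this corollary without proof, as an immediate consequence of \PROP{Prop_GenericSympMat}; your stratify-by-nullity-then-count-codimension argument is precisely the intended derivation, and it is correct. The one overstatement is the claim that each $\mc{V}_{1,k}$ is globally a smooth submanifold of $\sg$ of codimension exactly $\binom{k+1}{2}$: at points where $A_0$ has nontrivial Jordan structure at the eigenvalue $1$ this is more delicate than you indicate (and the student's own third paragraph acknowledges this). For the corollary, however, you only need that $\mc{U}_{1,2}$ is contained in a finite union of submanifolds of codimension at least two, and this already follows from the identity-chart model $E_1(\exp(JS))=\Ker S$ in the proof of \PROP{Prop_GenericSympMat} together with the standard fact that symmetric matrices of nullity $\geq 2$ have codimension $3$; the sharper global statement about each stratum is not required.
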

\begin{Cor}
Given a nonzero vector $v \in \R^{2n}$, consider the set of all symplectic matrices 
\beqn
\mathfrak{v}:=\Set{A}\sg{Av=v}.
\eeq
Then given $\ep>0$ and $A \in \mathfrak{v}$, there exists $B \in \mathfrak{v}$ with $\| A-B \| <\ep$ and $E_1(B)=\mr{span}(v).$
\end{Cor}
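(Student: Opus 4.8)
The plan is to make $\mathfrak v$ explicit via symplectic reduction along the line $\mr{span}(v)$, so that \PROP{Prop_GenericSympMat} can be applied on the reduced symplectic space and then supplemented by an adjustment of one extra scalar. First I would reduce to the case $v=e_1$, the first vector of the standard symplectic basis: since $\sg$ acts transitively on $\R^{2n}\setminus\{0\}$ there is $P\in\sg$ with $Pv=e_1$, and conjugation by $P$ carries $\mathfrak v$ to the stabilizer $\mathfrak v_{e_1}$ of $e_1$ and sends $E_1(A)$ to $E_1(PAP^{-1})$; it therefore suffices to solve the problem for $e_1$ with tolerance $\ep/(\|P\|\,\|P^{-1}\|)$.

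Next I would set up the block form. Complete $e_1$ to a symplectic basis $e_1,\dots,e_n,f_1,\dots,f_n$ and split $\R^{2n}=L\oplus W\oplus L'$ with $L=\mr{span}(e_1)$, $L'=\mr{span}(f_1)$, and $W=\mr{span}(e_2,\dots,e_n,f_2,\dots,f_n)$ the reduced symplectic space $e_1^{\perp_\om}/L$. Every $A\in\mathfrak v_{e_1}$ fixes $e_1$ and preserves the coisotropic hyperplane $e_1^{\perp_\om}=L\oplus W$ (because $\om(Au,e_1)=\om(Au,Ae_1)=\om(u,e_1)$), hence is block upper triangular with diagonal blocks $1,\bar A,1$, where $\bar A\in\mr{Sp}(W)$ is the induced reduced symplectomorphism, together with one off-diagonal row $b$, one column $w$, and one corner scalar $c$. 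Unwinding $A^{\top}\Omega A=\Omega$ in these blocks (where $\Omega$ is the Gram matrix of $\om$), I expect to find exactly three facts: $\bar A$ must be symplectic on $W$, the row $b$ is forced to equal a fixed bilinear expression in $(w,\bar A)$, and $c$ is unconstrained. In other words $\mathfrak v_{e_1}$ is parametrized, as a smooth manifold, by triples $(\bar A,w,c)\in\mr{Sp}(W)\times W\times\R$, with $A$ depending polynomially (in particular continuously) on the triple; as a sanity check this gives $\dim\mathfrak v_{e_1}=2n^2-n=\dim\sg-2n$, consistent with $\sg/\mathfrak v_{e_1}\cong\R^{2n}\setminus\{0\}$.

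Then a short computation of $\ker(A-I)$ should yield: $\mr{span}(v)\subseteq E_1(A)$ always, and if moreover $1\notin\si(\bar A)$ then $E_1(A)=\mr{span}(v)$ precisely when a certain scalar --- of the form $c$ minus an expression depending only on $(w,\bar A)$ --- is nonzero. With this the construction is immediate. Given $A\leftrightarrow(\bar A,w,c)$, I would use \PROP{Prop_GenericSympMat} (more precisely its corollary that $\{1\notin\si(\cdot)\}$ is dense in $\mr{Sp}(W)$) to replace $\bar A$ by a nearby $\bar B$ with $1\notin\si(\bar B)$; keeping $w$ fixed, the scalar above is affine in $c$ with slope $1$, so at most one value of $c$ makes it vanish and an arbitrarily small change of $c$ (if needed) removes that possibility. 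Taking all perturbations small enough produces $B\leftrightarrow(\bar B,w,c)$ in $\mathfrak v_{e_1}$ with $\|B-A\|<\ep/(\|P\|\,\|P^{-1}\|)$ and $E_1(B)=\mr{span}(e_1)$; conjugating back by $P^{-1}$ gives the desired matrix. The case $n=1$ is trivial since then $W=0$.

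The step I expect to be the real work is the block bookkeeping in the second paragraph: verifying that the symplectic relation couples only the row $b$ to $(w,\bar A)$ and leaves both $\bar A$ and $c$ genuinely free is exactly what makes the ``reduce, then adjust the scalar'' strategy go through, and one must get the constraints (and hence the degrees of freedom) exactly right. Everything else --- transitivity of $\sg$, the eigenvector computation, and the density input from \PROP{Prop_GenericSympMat} --- is routine.
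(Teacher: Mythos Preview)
Your argument is correct. The paper supplies no proof for this corollary at all --- it is stated immediately after \PROP{Prop_GenericSympMat} and left to the reader, presumably to be extracted from the multiplicative perturbation $A\mapsto A\exp(JS)$ used there (with the additional constraint $Sv=0$ to stay in $\mathfrak v$). Your route is genuinely different and more explicit: rather than working in the Lie algebra, you normalize to $v=e_1$, identify $\mathfrak v_{e_1}$ with $\mr{Sp}(W)\times W\times\R$ via symplectic reduction along $\mr{span}(e_1)$, and then perturb in two steps --- first the reduced map $\bar A$ (using the density statement from \PROP{Prop_GenericSympMat} on the smaller space $W$) and then the corner scalar $c$. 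The block bookkeeping you flag as the ``real work'' does go through: with $d=1$ forced, $\bar A$ symplectic, and $b=-\bar A^{\top}\bar\Omega w$ determined by $(w,\bar A)$, the entry $c$ is indeed unconstrained, and the eigenvalue computation gives $E_1(A)=\mr{span}(e_1)$ exactly when $1\notin\si(\bar A)$ and $c\neq b^{\top}(\bar A-I)^{-1}w$. What your approach buys over the paper's sketch is a transparent parametrization of $\mathfrak v$ (with the correct dimension $2n^2-n$) and a clean separation of the two obstructions to $\dim E_1=1$; the paper's exponential method would be shorter to write down but leaves the interaction between $A$ and $\exp(JS)$ implicit.
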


\begin{Prop}\label{Prop_GenericSymplectomorphism}
The set of symplectomorphisms which are Lefschetz regular is open and dense in $\mr{Symp}(W,\om).$  Further, a generic \emph{path} of symplectomorphisms $\phit$ has $i_L(\phit)\leq 1$ for all $t$.
\end{Prop}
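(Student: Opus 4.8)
The plan is to recast Lefschetz regularity as transversality to the diagonal and then run a \emph{finite}-dimensional parametric transversality argument, perturbing by Hamiltonian diffeomorphisms; \PROP{Prop_GenericSympMat} is the infinitesimal model that makes such a family of perturbations large enough. First I would recall from the remark after \DEF{Def_Lefschetz} that $i_L(F)=0$ is equivalent to $\BAR{F}\co W\to W\X W$ being transverse to the diagonal $\Delta_W$. For a path $\{\phi^t\}_{t\in[0,1]}$, the same dimension count as in \LEM{Lem_mandatorydegeneracy} shows that transversality of the parametrized graph $\BAR{\phi}\co[0,1]\X W\to W\X W$ to $\Delta_W$ at a point $(t,x)$ with $\phi^t(x)=x$ is equivalent to $\mr{span}(\dot\phi^t(x))+\mr{Im}(\phi^t_*-I)=T_xW$, i.e.\ to $\Dim{\mr{coker}(\phi^t_*-I)}\leq 1$; since $\phi^t_*$ is symplectic, $\Dim{\mr{coker}(\phi^t_*-I)}=\Dim{E_1(\phi^t,x)}$, so this is exactly $i_L(\phi^t,x)\leq 1$. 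Hence it suffices to make $\BAR{F}$, resp.\ $\BAR{\phi}$, transverse to $\Delta_W$ on an open dense set, resp.\ for a generic path.

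Openness is immediate: if $\BAR{F}\pitchfork\Delta_W$ then $\mr{Fix}(F)$ is a compact $0$-manifold, hence finite; at each fixed point $F_*-I$ is invertible, so the point is isolated and persists under a $C^1$-small perturbation with $F_*-I$ staying invertible; and $F$ has no fixed points on the compact complement of a neighborhood of $\mr{Fix}(F)$, which again persists. So Lefschetz-regular symplectomorphisms form an open set, and the path case is handled the same way along the compact $1$-manifold $\BAR{\phi}^{-1}(\Delta_W)$.

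For density I would cover the compact set $\mr{Fix}(F)$ by finitely many Darboux balls $B_1,\dots,B_m$ with $\mr{Fix}(F)\subset\bigcup_j B_j'$ for concentric smaller balls $B_j'$, and take as perturbation directions the $2n$ Darboux coordinate functions on each $B_j$ multiplied by a bump function equal to $1$ on $B_j'$ and supported in $B_j$; relabel these Hamiltonians $H_1,\dots,H_N$, so that $\{\d H_i(x)\}$ spans $T_x^*W$, hence $\{\hvf{H_i}(x)\}$ spans $T_xW$, at every $x\in\bigcup_j B_j'$. Writing $G_{\vec\theta}:=\sum_i\theta_i H_i$, I would form the universal map
\[
\Psi\co\R^N\X\bigcup_j B_j'\to W\X W,\qquad(\vec\theta,x)\mapsto\bigl(x,\,(F\circ\phi^1_{G_{\vec\theta}})(x)\bigr),
\]
note that for $\|\vec\theta\|$ small every fixed point of $F\circ\phi^1_{G_{\vec\theta}}$ stays in $\bigcup_j B_j'$ (a fixed point outside $\bigcup_j B_j$ would be a fixed point of $F$, and on the compact collar $\overline{\bigcup_j B_j\setminus\bigcup_j B_j'}$ the displacement of $F$ is bounded below), and then check the one infinitesimal statement that matters: at any $(\vec\theta,x)\in\Psi^{-1}(\Delta_W)$ the derivative of the second component in the $\vec\theta$-directions surjects onto $T_xW$, since differentiating the flow and applying the invertible map $F_*$ turns the span of $\{\hvf{H_i}\}$ into all of $T_xW$. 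Thus $\Psi\pitchfork\Delta_W$ near $\{0\}\X\mr{Fix}(F)$, so $\mc{Z}:=\Psi^{-1}(\Delta_W)$ is a smooth manifold there, and Sard's theorem for the projection $\mc{Z}\to\R^N$ together with the parametric transversality theorem give, for almost every small $\vec\theta$, a Lefschetz-regular $F\circ\phi^1_{G_{\vec\theta}}$ arbitrarily $C^\INF$-close to $F$. The path statement is identical, with the source enlarged to $\R^N\X[0,1]\X\bigcup_j B_j'$, the balls now covering the compact set $\bigcup_t\mr{Fix}(\phi^t)$, the perturbation $\phi^t\mapsto\phi^t\circ\phi^1_{G_{\vec\theta}}$, and the conclusion $i_L(\phi^t)\leq1$ for all $t$ read off from the first paragraph.

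The step requiring the most care is the one I flagged above: confining the fixed points of the perturbed map to the region where the chosen Hamiltonians have everywhere-spanning differentials, so that a single transversality computation applies uniformly along all of $\mr{Fix}$ (rather than perturbing near one fixed point and creating new degenerate ones elsewhere). Once that is arranged, the remainder is the standard Hamiltonian-perturbation transversality package, and no Banach-manifold bookkeeping is needed because the parameter space $\R^N$ is finite-dimensional.
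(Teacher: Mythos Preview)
Your argument is correct and takes a genuinely different route from the paper's. The paper lifts $\phi$ to a section $\widehat{\phi}\co x\mapsto(\phi(x),\phi_*)$ into $M\times\Hom{TM,\pb{\phi}{TM}}$ and studies transversality to the stratified subbundles $\mc{T}_{k+1}$ cut out by the condition $\Dim{E_1}=k+1$; the codimension count $\mr{codim}\,\mc{T}_{k+1}=2n+k+1$ (coming from \PROP{Prop_GenericSympMat}) then forces a transverse map to miss $\mc{T}_1$ and a transverse path to miss $\mc{T}_2$, and density is argued via infinite-dimensional parametric transversality with the full group $\mr{Symp}(M,\om)$ as parameter space. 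You instead stay at the level of the graph map into $W\times W$, reduce everything to transversality with the diagonal, and run a \emph{finite}-dimensional perturbation using explicit Darboux-coordinate Hamiltonians. Your approach is more elementary and entirely self-contained: the surjectivity of the universal derivative is checked by hand, no Banach-manifold transversality is invoked, and the careful confinement of perturbed fixed points to the region where the $\hvf{H_i}$ span is exactly the bookkeeping that replaces the paper's appeal to the infinite-dimensional setup. The paper's stratification language, on the other hand, packages the higher-degeneracy conditions more systematically and makes the connection to Conley--Zehnder theory explicit. One small inaccuracy worth noting: you write that transversality of $\BAR{\phi}$ at $(t,x)$ is ``exactly $i_L(\phi^t,x)\leq 1$'', but in fact it is the slightly stronger condition $\mr{span}(\dot\phi^t(x))+\mr{Im}(\phi^t_*-I)=T_xW$, which also requires $\dot\phi^t(x)\notin\mr{Im}(\phi^t_*-I)$ when $i_L=1$. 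This is harmless, since only the forward implication (transversality $\Rightarrow i_L\leq 1$) is used.
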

\begin{proof}
For all the statements, the strategy is to appeal to the previous for dimension counts, and then to pass to infinite dimensional parametric transversality to show the transversality can be achieved using symplectomorphisms.  Given $\phi \in \mr{Symp}(M,\om)$, we may regard $\phi_*$ as a section $\phi_* \in \Ga(\Hom{TM,\pb\phi{TM}})$ and we form the map
\beq\label{eq_lift} 
\Map{\widehat{\phi}}{M}{M \X \mr{Hom}\left( TM,\pb{\phi}{TM} \right)}, \quad x \mapsto (\phi(x),\phi_*).
\eeq
Note that $\Hom{TM,\pb{\phi}{TM}}\RE{\mr{Fix}(\phi)}$ has natural subbundles\footnote{Here we regard $\mr{End}(TM)$ as a fiber bundle.  The subbundles we speak of are not linear.} $\mc{V}_{1,k}$ whose fiber above $x$ is the set of endomorphisms whose $1$-eigenspace has dimension $k$.  This is a natural construction when restricted to $\mr{Fix}(\phi)$.\footnote{The reader may compare this construction to the formulation of the Conley Zehnder index. Along a path $u(t)$, trivialize $\Hom{T_{u(0)}M,\pb{u}{TM}}$.  Then the relative Conley Zehnder index is (up to sign) the intersection number of the associated section and $\mc{V}_{1,1}$.}  

Using this language, one observes that a symplectomorphism has Lefschetz degeneracy at most $k$ if and only if the image of $\widehat{\phi}$ does not intersect 
\beqn
\mc{T}_{k+1}:=\mr{Fix}(\phi) \cap \mc{V}_{1,k+1} \subset M \times \Hom{TM,\pb{\phi}{TM}}.
\eeq
Since the codimension of $\mc{T}_{k+1}$ is $2n+k+1,$ we conclude that

\begin{enumerate}
\item  If $\widehat{\phi}$ is transverse to $\mc{T}_*$, it is Lefschetz regular \label{list_transone}
\item  If a path of symplectomorphisms is transverse to $\mc{T}_*$, then $\phit$ has Lefschetz degeneracy at most $1$ for all $t.$ \label{list_transtwo}
\end{enumerate}
Here we employ standard usage that a map is transverse to a stratified submanifold if and only if it is transverse to all the strata.  It is clear that the respective sets satisfying (\ref{list_transone}), (\ref{list_transtwo}) are open.  To show that they are dense, we must pass to parametric transversality.

The question we now address is the following: given a symplectomorphism $\phi$, it has been established that there exist \emph{diffeomorphisms} arbitrarily close to $\phi$ enjoying the desired properties.  Can we strengthen this to symplectomorphisms?  This is far more standard, hence we only sketch the argument.  Form the evaluation map 
\beqn \mr{Symp}(M,\om) \X M \to M\X (T^*M \otimes_{\mr{Symp}(M,\om)}TM),\quad (\psi,x) \mapsto (\psi(x),\psi_*). \eeq
Since this map is transverse to $\mc{T}_*$, the inverse function theorem guarantees that a generic symplectic map will satisfy the requirement.
\end{proof}

\bibliography{mybib}{}
\bibliographystyle{plain}
\end{document}